%Absorbing set7%

\documentclass[12pt]{amsart}
\usepackage{amsmath,amssymb,amsthm,amscd,amsfonts,geometry,color}
\geometry{hmargin=2cm, tmargin=2.5cm, bmargin=2cm}

\renewcommand{\labelenumi}{\rm(\theenumi)}

\theoremstyle{plain}
\newtheorem{thm}{Theorem}[section]
\newtheorem{prop}[thm]{Proposition}
\newtheorem{cor}[thm]{Corollary}
\newtheorem{lem}[thm]{Lemma}

\newtheorem{main}{Main Theorem}

\theoremstyle{remark}
\newtheorem{remark}{Remark}

\theoremstyle{definition}

                               % the end of the proof

     % definition

                       % complex numbers
                       % real numbers
                       % integers
                       % natural numbers
\newcommand{\I}{\mathbf{I}}                       % unit interval
                     % unit sphere

                       % the Hilbert cube
                       % the psuedo interior
                      % convergence sequences

\newcommand{\0}{\mathbf{0}}                       % zero vector
\newcommand{\e}{\mathbf{e}}                       % unit vector

\newcommand{\cl}{\operatorname{cl}}               % closure
\newcommand{\intr}{\operatorname{int}}            % interior
               % boundary

               % identity
\newcommand{\pr}{\operatorname{pr}}               % projection

             % open cover
               % star of cover

\newcommand{\card}{\operatorname{card}}	          % cardinality
	          % weight
	          % density
            % dimension

           % distance
           % diameter
           % mesh
             % order

           % convex hull
               % flat hull

             % powerset
             % finite sets
             % closed sets
             % compact sets

               % star in a complex
               % link in a complex
               % barycentric subdivision

             % continuous maps
             % uniformly continuous maps
             % u.s.c functions
           % u.s.c compact connected set-valued functions
             % Lipschitz maps
             % locally Lipschitz maps
             % Lipschitz maps in the small
            % Lipschitz constance

      %down arrow

\begin{document}

\title[Manifolds modeled on absorbing sets and the discrete cells property]{Characterizations of topological manifolds modeled on absorbing sets in non-separable Hilbert spaces and the discrete cells property}
\author{Katsuhisa Koshino}
\address[Katsuhisa Koshino]{Faculty of Engineering, Kanagawa University, Yokohama, 221-8686, Japan}
\email{ft160229no@kanagawa-u.ac.jp}
\subjclass[2010]{Primary 57N20; Secondary 54F65, 57N75}
\keywords{$Z$-set, the strong universality, absorbing set, the discrete (locally finite) approximation property, the discrete cells property, Hilbert space}
\maketitle

\begin{abstract}
In this paper, we characterize infinite-dimensional manifolds modeled on absorbing sets in non-separable Hilbert spaces by using the discrete cells property,
 which is a general position property.
Moreover, we study the discrete (locally finite) approximation property,
 which is an extension of the discrete cells property.
\end{abstract}

\section{Introduction}

Throughout this paper, spaces are metrizable, maps are continuous, and $\kappa$ is an infinite cardinal.
Let $\ell_2(\kappa)$ be the Hilbert space of density $\kappa$, and $\ell_2^f(\kappa)$ be the linear subspace spanned by the canonical orthonormal basis of $\ell_2(\kappa)$,
 that is,
 $$\ell_2^f(\kappa) = \{(x(\gamma))_{\gamma < \kappa} \in \ell_2(\kappa) \mid x(\gamma) = 0 \text{ except for finitely many } \gamma < \kappa\}.$$
Given a space $E$, we call a space $X$ to be an \textit{$E$-manifold} if each point $x \in X$ has an open neighborhood homeomorphic to some open subset of $E$.
For a class $\mathcal{C}$ of spaces and a cardinal $\lambda$, write
\begin{itemize}
 \item $\mathfrak{C}_\sigma = \{\bigcup_{n \in \omega} A_n \mid A_n \in \mathfrak{C} \text{ and } A_n \text{ is closed}\}$,
 \item $\bigoplus_\lambda \mathfrak{C} = \{\bigoplus_{\gamma < \lambda} A_\gamma \mid A_\gamma \in \mathfrak{C}\}$,
 \item $\mathfrak{C}(\lambda) = \{A \in \mathfrak{C} \mid A \text{ is of density} \leq \lambda\}$.
\end{itemize}

A closed set $A$ in a space $X$ is called a \textit{$Z$-set} in $X$ if for each open cover $\mathcal{U}$ of $X$, there exists a map $f : X \to X$ such that $f$ is $\mathcal{U}$-close to the identity map of $X$ and $f(X) \cap A = \emptyset$.
When the closure $\cl{f(X)}$ misses $A$,
 it is said to be a \textit{strong $Z$-set} in $X$.
Recall that for maps $f : X \to Y$ and $g : X \to Y$, and for an open cover $\mathcal{U}$ of $Y$, $f$ is \textit{$\mathcal{U}$-close} to $g$ provided that for any point $x \in X$, there is a member $U \in \mathcal{U}$ such that $\{f(x), g(x)\} \subset U$.
A \textit{(strong) $Z_\sigma$-set} is a countable union of (strong) $Z$-sets.
A map $f : X \to Y$ is a \textit{$Z$-embedding} if $f$ is an embedding and the image $f(X)$ is a $Z$-set in $Y$.
Given a class $\mathfrak{C}$, we say that a space $X$ is \textit{strongly $\mathfrak{C}$-universal} or \textit{strongly universal for $\mathfrak{C}$} if the following condition is satisfied.
\begin{itemize}
 \item Let $f : A \to X$ be a map from a space in $\mathfrak{C}$.
 Suppose that $B$ is a closed subset of $A$ and the restriction $f|_B$ is a $Z$-embedding.
 Then for each open cover $\mathcal{U}$ of $X$, there exists a $Z$-embedding $g : A \to X$ such that $g$ is $\mathcal{U}$-close to $f$ and $g|_B = f|_B$.
\end{itemize}
For spaces $X \subset M$, $X$ is said to be a \textit{$\mathfrak{C}$-absorbing set} or an \textit{absorbing set for $\mathfrak{C}$} in $M$,
 which plays an important role in the theory of infinite-dimensional topology, provided that it satisfies the following conditions.
\begin{enumerate}
 \item $X$ is homotopy dense in $M$ and $X \in \mathfrak{C}_\sigma$.
 \item $X$ is strongly $\mathfrak{C}$-universal.
 \item $X$ is a strong $Z_\sigma$-set in itself.
\end{enumerate}
Here $X$ is \textit{homotopy dense} in $M$ if there exists a homotopy $h : M \times \I \to M$ such that $h(M \times (0,1]) \subset X$ and $h(x,0) = x$ for each $x \in M$.
For a cardinal $\lambda$, a space $X$ has \textit{the $\lambda$-discrete (locally finite) approximation property for $\mathfrak{C}$} provided that the following condition holds.
\begin{itemize}
 \item Let $f : \bigoplus_{\gamma < \lambda} A_\gamma \to X$ be a map from a topological sum of members of $\mathfrak{C}$.
 For every open cover $\mathcal{U}$ of $X$, there is a map $g : \bigoplus_{\gamma < \lambda} A_\gamma \to X$ such that $g$ is $\mathcal{U}$-close to $f$ and the family $\{g(A_\gamma) \mid \gamma < \lambda\}$ is discrete (locally finite) in $X$.
\end{itemize}
In particularly, when $\mathfrak{C} = \{\I^n\}$, $n \in \omega$,
 we say that $X$ has \textit{the $\lambda$-discrete (locally finite) $n$-cells property}.
These properties are general position properties that manifolds have based on ``dimension'' in the finite-dimensional case or ``density'' in the infinite-dimensional case.
They are very useful for recognizing manifolds.
For example, H.~Toru\'nczyk \cite{Tor5,Tor6} characterized $\ell_2(\kappa)$-manifolds by using the $\kappa$-discrete $n$-cells property.
We will study these properties in Section~\ref{D(LF)AP}.

It is said that a class $\mathfrak{C}$ is \textit{topological} if every space homeomorphic to some member of $\mathfrak{C}$ also belongs to $\mathfrak{C}$,
 and $\mathfrak{C}$ is \textit{closed hereditary} if any closed subspace of some member of $\mathfrak{C}$ also belongs to $\mathfrak{C}$.
Using the discrete cells property, we shall characterize infinite-dimensional manifolds modeled on absorbing sets in Hilbert spaces as follows,
 which generalize and improve the previous results in \cite{BeMo,SaY,Min,Sa11}.

\begin{thm}\label{abs.char.}
Let $\mathfrak{C}$ be a topological and closed hereditary class, and $\Omega$ be a $\mathfrak{C}$-absorbing set in $\ell_2(\kappa)$.
For a connected space $X \in \mathfrak{C}_\sigma$ of density $\leq \kappa$, the following are equivalent.
\begin{enumerate}
 \item $X$ is an $\Omega$-manifold.
 \item $X$ can be embedded into some $\ell_2(\kappa)$-manifold as a $\mathfrak{C}$-absorbing set.
 \item $X$ satisfies the following conditions:
 \begin{enumerate}
  \item $X$ is an ANR;
  \item $X$ is strongly $\mathfrak{C}$-universal;
  \item $X$ has the $\kappa$-discrete $n$-cells property for every $n \in \omega$;
  \item $X$ is a strong $Z_\sigma$-set in itself.
 \end{enumerate}
\end{enumerate}
\end{thm}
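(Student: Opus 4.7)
The plan is to establish the cycle $(1) \Rightarrow (2) \Rightarrow (3) \Rightarrow (1)$, with the final implication being the substantive one that combines strong universality, the discrete cells property, and the uniqueness of $\mathfrak{C}$-absorbing sets.

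For $(1) \Rightarrow (2)$, given an $\Omega$-manifold structure on $X$, I would fix a locally finite open cover $\{U_\alpha\}$ of $X$ with each $U_\alpha$ homeomorphic to an open subset $V_\alpha$ of $\Omega$, together with corresponding open subsets $W_\alpha$ of $\ell_2(\kappa)$ in which $V_\alpha$ sits as a relatively $\mathfrak{C}$-absorbing piece, and then glue the $W_\alpha$ along the transition maps to obtain an $\ell_2(\kappa)$-manifold $M \supset X$ in which $X$ is a $\mathfrak{C}$-absorbing set. The ANR property of $M$, together with the local character of homotopy density, of membership in $\mathfrak{C}_\sigma$, of strong $\mathfrak{C}$-universality, and of being a strong $Z_\sigma$-set in itself, allows these four properties to globalize.

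For $(2) \Rightarrow (3)$, conditions (b) and (d) are built into the definition of a $\mathfrak{C}$-absorbing set. Condition (a) follows because the ambient $\ell_2(\kappa)$-manifold $M$ is an ANR and $X$ is homotopy dense in $M$. For (c), Toru\'nczyk's characterization endows $M$ with the $\kappa$-discrete $n$-cells property, and homotopy density transfers this property to $X$: given a map $f : \bigoplus_{\gamma < \kappa} \I^n \to X \subset M$, one pushes $f$ to a map $g : \bigoplus_{\gamma < \kappa} \I^n \to M$ with $\{g(\I^n_\gamma)\}_{\gamma < \kappa}$ discrete in $M$, then slides $g$ back into $X$ along the homotopy finely enough to preserve discreteness.

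For $(3) \Rightarrow (1)$, I would embed $X$ as a closed subset of $\ell_2(\kappa)$ (possible since $X$ is an ANR of density $\leq \kappa$) and select an ANR neighborhood $M$ of $X$ in $\ell_2(\kappa)$. Using (b), (c), and (d) of condition (3), one arranges that $X$ is homotopy dense in $M$ and that $M$ itself has the $\kappa$-discrete $n$-cells property for every $n$, so Toru\'nczyk's characterization makes $M$ an $\ell_2(\kappa)$-manifold. Then (3) directly certifies $X$ as a $\mathfrak{C}$-absorbing set in $M$. Applying the uniqueness theorem for $\mathfrak{C}$-absorbing sets in $\ell_2(\kappa)$-manifolds locally at each point of $X$ yields that every point of $X$ has a neighborhood homeomorphic to an open subset of $\Omega$, so $X$ is an $\Omega$-manifold; the connectedness hypothesis keeps the model consistent across components and rules out trivial degenerate cases. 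The hard part will be this last implication: promoting the $\kappa$-discrete $n$-cells property of $X$ to the same property on an ANR neighborhood in $\ell_2(\kappa)$ while simultaneously forcing $X$ to be homotopy dense there. Both facets require an inductive construction driven by strong $\mathfrak{C}$-universality and the strong $Z_\sigma$-property of $X$ in itself, and the non-separable setting is delicate because one must control discreteness of $\kappa$-indexed families rather than merely locally finite ones, which is exactly the point where the discrete (locally finite) approximation machinery developed elsewhere in the paper has to be brought to bear.
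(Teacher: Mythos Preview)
Your cycle $(1)\Rightarrow(2)\Rightarrow(3)\Rightarrow(1)$ is the reverse of the paper's $(1)\Rightarrow(3)\Rightarrow(2)\Rightarrow(1)$. Your $(2)\Rightarrow(3)$ is essentially right and matches the paper's $(1)\Rightarrow(3)$ in spirit; your $(1)\Rightarrow(2)$ by gluing charts is the approach of \cite{Sa11} and is plausible, though nontrivial (one must extend $\Omega$-homeomorphisms to $\ell_2(\kappa)$-homeomorphisms).

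The genuine gap is your $(3)\Rightarrow(1)$. You propose to embed $X$ as a closed subset of $\ell_2(\kappa)$, pick an ANR neighborhood $M$, and then ``arrange that $X$ is homotopy dense in $M$ and that $M$ itself has the $\kappa$-discrete $n$-cells property.'' There is no mechanism for either. A closed ANR in $\ell_2(\kappa)$ is generically \emph{not} homotopy dense in any of its neighborhoods, and conditions (b), (c), (d) are intrinsic to $X$: they let you move maps around \emph{inside} $X$, but give no leverage on the complement $M\setminus X$. The ``inductive construction driven by strong $\mathfrak{C}$-universality'' you mention would have to absorb ambient points of $M$ into $X$, which strong universality does not do. The paper sidesteps this completely: for $(3)\Rightarrow(2)$ it never tries to enlarge $X$. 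Instead it takes an off-the-shelf $\mathfrak{C}$-absorbing set $Y$ in some $\ell_2(\kappa)$-manifold (Proposition~\ref{abs.set-H.mfd.}), produces a fine homotopy equivalence $f:Y\to X$ (Theorem~3.6 of \cite{SaY}), and applies Proposition~\ref{homot.equiv.}, which says that under hypotheses (a)--(d) any such $f$ is a near-homeomorphism, whence $X\cong Y$. This near-homeomorphism theorem is the actual engine of the argument and is precisely what replaces your missing enlargement step; you should reorganize the proof around it.
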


A class $\mathfrak{C}$ is \textit{$\I$-stable} provided that for each $A \in \mathfrak{C}$, the product $A \times \I \in \mathfrak{C}$.
As is easily observed,
 the class $\mathfrak{M}_0^{fd}$ of finite-dimensional compact spaces is topological, closed hereditary and $\I$-stable.
The space $\ell_2^f(\kappa)$ is a $\bigoplus_\kappa \mathfrak{M}_0^{fd}$-absorbing set in $\ell_2(\kappa)$.
In Section~\ref{abs.}, we will prove the following:

\begin{thm}\label{abs.discr.}
Let $\aleph_0 \leq \kappa' < \kappa$ and $\mathfrak{C}$ be a topological, closed hereditary and $\I$-stable class.
If $\Omega$ is a $\mathfrak{C}(\kappa')$-absorbing set in $\ell_2(\kappa')$,
 then $\ell_2^f(\kappa) \times \Omega$ is a $\bigoplus_{\kappa} \mathfrak{C}(\kappa')$-absorbing set in $\ell_2(\kappa) \times \ell_2(\kappa')$.
\end{thm}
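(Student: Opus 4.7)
The plan is to verify the three defining conditions for $X := \ell_2^f(\kappa) \times \Omega$ to be a $\bigoplus_\kappa \mathfrak{C}(\kappa')$-absorbing set in $\ell_2(\kappa) \times \ell_2(\kappa')$: (i) homotopy density together with $X \in (\bigoplus_\kappa \mathfrak{C}(\kappa'))_\sigma$; (ii) $X$ is a strong $Z_\sigma$-set in itself; (iii) $X$ is strongly $\bigoplus_\kappa \mathfrak{C}(\kappa')$-universal. Condition (i) reduces quickly: homotopy density is the product of the two deformations for $\ell_2^f(\kappa) \subset \ell_2(\kappa)$ and $\Omega \subset \ell_2(\kappa')$; for the $F_\sigma$-decomposition, use that $\ell_2^f(\kappa)$ is $\bigoplus_\kappa \mathfrak{M}_0^{fd}$-absorbing to write $\ell_2^f(\kappa) = \bigcup_n \bigoplus_{\gamma < \kappa} A_{n,\gamma}$ with each $\bigoplus_\gamma A_{n,\gamma}$ closed and $A_{n,\gamma}$ a finite-dimensional compactum, and write $\Omega = \bigcup_m B_m$ with $B_m \in \mathfrak{C}(\kappa')$ closed. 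Then $X = \bigcup_{n,m} \bigoplus_\gamma (A_{n,\gamma} \times B_m)$, and each $A_{n,\gamma} \times B_m$ sits as a closed subspace of $B_m \times \I^{k} \in \mathfrak{C}$ (for some $k$ such that $A_{n,\gamma}$ embeds in $\I^{k}$) by $\I$-stability, of density $\leq \kappa'$.

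Condition (ii) is handled by writing $\ell_2^f(\kappa) = \bigcup_n Z_n$ with each $Z_n$ a strong $Z$-set in $\ell_2^f(\kappa)$: then $X = \bigcup_n (Z_n \times \Omega)$, and each $Z_n \times \Omega$ is a strong $Z$-set in $X$ by pushing off $Z_n$ in the first coordinate and extending by the identity on $\Omega$, after a paracompactness refinement of the given cover of $X$ to a product-style cover. The heart of the proof is (iii). Given $f : \bigoplus_{\gamma < \kappa} D_\gamma \to X$ with $D_\gamma \in \mathfrak{C}(\kappa')$, a closed $B \subset \bigoplus_\gamma D_\gamma$ with $f|_B$ a $Z$-embedding, and an open cover $\mathcal{U}$ of $X$, the plan is \emph{discretize then $Z$-embed}. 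In the first stage, exploit $\kappa' < \kappa$ and the fact that each $\pr_1 f(D_\gamma)$ has density $\leq \kappa'$ in $\ell_2^f(\kappa)$ to perturb $f$ (fixing $f|_B$) by modifying only the first coordinate, driving $\pr_1 f'(D_\gamma)$ into $\kappa$ pairwise disjoint closed slices of $\ell_2^f(\kappa)$ so that $\{f'(D_\gamma)\}_\gamma$ becomes discrete in $X$. In the second stage, on each summand $D_\gamma$ separately, apply a preliminary lemma asserting that $X$ is strongly $\mathfrak{C}(\kappa')$-universal -- deduced from the strong $\mathfrak{C}(\kappa')$-universality of $\Omega$ combined with the freedom in the $\ell_2^f(\kappa)$-factor -- to replace $f'|_{D_\gamma}$ by a $Z$-embedding $g_\gamma$ agreeing with $f'$ on $B \cap D_\gamma$ and so close to $f'|_{D_\gamma}$ that the family $\{g_\gamma(D_\gamma)\}_\gamma$ remains discrete. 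Assembling the $g_\gamma$'s gives a $Z$-embedding $g : \bigoplus_\gamma D_\gamma \to X$, $\mathcal{U}$-close to $f$, with $g|_B = f|_B$.

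The hard part will be the second stage: proving the strong $\mathfrak{C}(\kappa')$-universality of $X$ as a lemma -- which requires $\I$-stability of $\mathfrak{C}$ to trade off corrections between the finite-dimensional first factor and the $\Omega$-factor -- and then making the summand-wise $Z$-embedding approximations tight enough to preserve the discreteness secured in the first stage. It is precisely the interplay $\kappa > \kappa'$, together with the absorbing structure of $\ell_2^f(\kappa)$, that furnishes the ``room'' needed for both the dispersal in stage one and the careful $Z$-embedding in stage two.
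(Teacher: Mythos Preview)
Your overall architecture --- verify homotopy density and the $F_\sigma$-decomposition, then strong $Z_\sigma$, then prove strong $\bigoplus_\kappa\mathfrak{C}(\kappa')$-universality by ``discretize, then $Z$-embed summand-wise'' --- is exactly the paper's strategy. Parts (i) and (ii) of your plan match Propositions~\ref{homot.dense}, \ref{class}, \ref{str.Z_sigma} essentially verbatim, and your Stage~2 lemma (strong $\mathfrak{C}(\kappa')$-universality of $X$) is the paper's Proposition~\ref{str.univ.}, proved there via Proposition~\ref{str.univ.prod.}.

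There is, however, a genuine gap in your Stage~1. You propose to modify \emph{only the first coordinate}, ``driving $\pr_1 f'(D_\gamma)$ into $\kappa$ pairwise disjoint closed slices of $\ell_2^f(\kappa)$'' while keeping $f'|_B=f|_B$. But the first-coordinate projections $\pr_1 f(B\cap D_\gamma)$ need not be pairwise disjoint in $\ell_2^f(\kappa)$ (the family $\{f(B\cap D_\gamma)\}$ is discrete in $X$, not after projection), so it is in general impossible to place the sets $\pr_1 f'(D_\gamma)$ in disjoint first-coordinate slices without moving $f|_B$. The paper avoids this obstruction by working abstractly in $X$ rather than coordinate-wise: it first establishes that $X$ itself has the $\kappa$-discrete approximation property for $\mathfrak{C}(\kappa')$ (Proposition~\ref{DAP}, via Propositions~\ref{DAP-l2f}, \ref{DAP-prod.} and Lemma~\ref{LFAP-DAP}), and then invokes Lemma~\ref{str.discr.approx.}, which is precisely a ``discretize rel $B$'' lemma whose hypotheses --- that $\{f(B\cap D_\gamma)\}$ is discrete and each closure is a strong $Z$-set in $X$ --- hold here automatically. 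That lemma's proof uses the strong $Z$-set property of $\cl{f(B)}$ to first push everything off a neighborhood of $f(B)$, then applies the DAP, then interpolates back near $B$; this two-step homotopy is what makes fixing $f|_B$ compatible with discretization, and it cannot be replaced by a naive first-coordinate separation. Note also that the DAP of $X$ is needed a second time, implicitly, for your Stage~2 lemma: Proposition~\ref{str.univ.prod.} requires that every $Z$-set in $X$ be a strong $Z$-set, which the paper obtains from Proposition~\ref{Z-DCP}(2) using the $\kappa$-discrete $n$-cells property (a consequence of the DAP) together with the strong $Z_\sigma$-property. So the DAP of $X$, which you bypass, is actually the pivot on which both stages rest.
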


By the discrete cells property, we shall establish the following characterization.

\begin{main}
Let $\aleph_0 \leq \kappa' < \kappa$.
Suppose that $\mathfrak{C}$ is a topological, closed hereditary and $\I$-stable class,
 and that $\Omega$ is a $\mathfrak{C}(\kappa')$-absorbing set in $\ell_2(\kappa')$.
A connected space $X \in (\bigoplus_{\kappa} \mathfrak{C}(\kappa'))_\sigma$ is an $(\ell_2^f(\kappa) \times \Omega)$-manifold if and only if the following conditions are satisfied.
\begin{enumerate}
 \item $X$ is an ANR.
 \item $X$ is strongly $\mathfrak{C}(\kappa')$-universal.
 \item $X$ has the $\kappa$-discrete $n$-cells property for any $n \in \omega$.
 \item $X$ is a strong $Z_\sigma$-set in itself.
\end{enumerate}
\end{main}

\section{Characterizations of infinite-dimensional manifolds}

In this section, some characterizations of infinite-dimensional manifolds related on Main Theorem and its applications are introduced.
A space $X$ is called to be \textit{$\sigma$-locally compact} if it is a countable union of locally compact subspaces.
Note that any $\sigma$-locally compact space can be expressed as a countable union of closed subspaces that are discrete unions of compact sets, refer to \cite[Proposition~5.1 and Remark~3]{Kos1}.
In \cite{SaY} (cf.~\cite{Mog}), K.~Sakai and M.~Yaguchi characterized $\ell_2^f(\kappa)$-manifolds as follows:

\begin{thm}
A connected space $X$ is an $\ell_2^f(\kappa)$-manifold if and only if the following conditions hold.
\begin{enumerate}
 \item $X$ is a strongly countable-dimensional, $\sigma$-locally compact ANR of density $\leq \kappa$.
 \item $X$ is strongly universal for the class of strongly countable-dimensional, locally compact spaces of density $\leq \kappa$.
 \item $X$ is a strong $Z_\sigma$-set in itself.
\end{enumerate}
\end{thm}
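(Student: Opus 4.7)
The plan is to reduce the theorem to the absorbing-set characterizations established earlier in the paper, with $\mathfrak{C}=\mathfrak{M}_0^{fd}$ (topological, closed hereditary, and $\I$-stable) and $\Omega=\ell_2^f(\aleph_0)$, classically known to be a $\mathfrak{M}_0^{fd}$-absorbing set in $\ell_2$. For $\kappa=\aleph_0$, I would apply Theorem~\ref{abs.char.} directly to $\Omega$. For $\kappa>\aleph_0$, I would apply the Main Theorem with $\kappa'=\aleph_0$: it characterizes $(\ell_2^f(\kappa)\times\ell_2^f)$-manifolds, and by Theorem~\ref{abs.discr.} together with the standard uniqueness of $\bigoplus_\kappa\mathfrak{M}_0^{fd}$-absorbing sets in $\ell_2(\kappa)$, one has $\ell_2^f(\kappa)\times\ell_2^f\cong\ell_2^f(\kappa)$, so this yields the desired characterization of $\ell_2^f(\kappa)$-manifolds.

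Next, I would translate the Sakai--Yaguchi conditions into the hypotheses of the invoked characterization. The structural condition $X\in(\bigoplus_\kappa\mathfrak{M}_0^{fd})_\sigma$ combined with the ANR condition corresponds to $X$ being a strongly countable-dimensional, $\sigma$-locally compact ANR of density $\leq\kappa$: by the cited representation of a $\sigma$-locally compact space as a countable union of closed discrete sums of compacta, the strong countable-dimensionality refines each compact summand to a finite-dimensional one, while the density bound limits the indexing cardinality to $\kappa$. Sakai--Yaguchi condition (3) coincides verbatim with Main Theorem condition (4). The $\kappa$-discrete $n$-cells property of the Main Theorem, absent from Sakai--Yaguchi's list, is automatic from their universality applied to $\bigoplus_{\gamma<\kappa}\I^n$: any $Z$-embedding $g$ of a topological sum of compacta has image family $\{g(A_\gamma)\}_{\gamma<\kappa}$ discrete in $X$, because each $g(A_\gamma)$ is open-closed in the closed image $g(\bigoplus_{\gamma<\kappa}A_\gamma)$.

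The bulk of the work lies in matching the universality conditions. Granted the ANR and strong $Z_\sigma$-set-in-itself hypotheses, I would show that strong $\mathfrak{M}_0^{fd}$-universality is equivalent to strong universality for the larger Sakai--Yaguchi class of strongly countable-dimensional, locally compact spaces of density $\leq\kappa$. The nontrivial direction decomposes a source $A$ in this larger class into its $\sigma$-compact topological-sum components (at most $\kappa$ many) and each component further into an increasing union of finite-dimensional compacta, then inductively builds a controlled $Z$-embedding via strong $\mathfrak{M}_0^{fd}$-universality on each compactum, using the strong $Z_\sigma$-set-in-itself property to ensure that the limit remains a $Z$-embedding. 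This inductive construction, carried out uniformly with respect to a prescribed open cover of $X$, is the main technical obstacle; once handled, Sakai--Yaguchi's theorem reduces formally to Theorem~\ref{abs.char.} (for $\kappa=\aleph_0$) or to the Main Theorem (for $\kappa>\aleph_0$).
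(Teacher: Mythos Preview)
The paper does not prove this statement; it is quoted in Section~2 as a known result of Sakai and Yaguchi \cite{SaY} (cf.~\cite{Mog}), serving as background that the paper's later results (Theorem~\ref{l2f}, Theorem~\ref{abs.char.}, the Main Theorem) generalize or improve. There is thus no ``paper's own proof'' to compare against.

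Your proposal goes in the reverse logical direction: you derive the Sakai--Yaguchi characterization from the paper's Main Theorem and Theorem~\ref{abs.char.}. This is a legitimate alternative, and your outline is largely sound---the identification $\ell_2^f(\kappa)\times\ell_2^f(\aleph_0)\cong\ell_2^f(\kappa)$ via Theorem~\ref{abs.discr.} and uniqueness of absorbing sets is correct, the translation of the structural condition is right, and deriving the $\kappa$-discrete $n$-cells property from the stronger universality is fine. However, two points deserve comment. First, you should check that the paper's proofs of Theorem~\ref{abs.char.} and the Main Theorem do not themselves invoke the Sakai--Yaguchi characterization; they rely on more primitive results from \cite{BeMo,SaY} (Propositions~\ref{homot.equiv.} and~\ref{abs.set-H.mfd.}), so circularity is avoided, but this should be stated. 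Second, what you call ``the main technical obstacle''---bootstrapping from strong $\mathfrak{M}_0^{fd}$-universality to strong universality for the Sakai--Yaguchi class---is already packaged in the paper: Lemma~\ref{str.discr.approx.} plus Proposition~\ref{str.univ.sigma} give strong $(\bigoplus_\kappa\mathfrak{M}_0^{fd})_\sigma$-universality, and the Sakai--Yaguchi class sits inside $(\bigoplus_\kappa\mathfrak{M}_0^{fd})_\sigma$ since a strongly countable-dimensional locally compact metrizable space of density $\leq\kappa$ decomposes as a topological sum of $\sigma$-compact pieces. Note, though, that invoking Proposition~\ref{str.univ.sigma} requires the $\kappa$-discrete $n$-cells property among its hypotheses, not just the ANR and strong $Z_\sigma$ conditions you list as ``granted''.
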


The theory of infinite-dimensional topology is applied for the study of function spaces and hyperspaces because they are often homeomorphic to typical infinite-dimensional spaces.
For example, some function spaces consisting of PL maps and some hyperspaces consisting of finite subsets can be homeomorphic to $\ell_2^f(\kappa)$,
 see \cite{Sakaik8,CN,Kos8}.
When we detect $\ell_2^f(\kappa)$-manifolds among spaces by using the above characterization,
 it will be difficult to verify the strong universality.
As is easily observed,
 the strong universality for the class of strongly countable-dimensional, locally compact spaces of density $\leq \kappa$ implies the one for $\mathfrak{M}_0^{fd}$ and the $\kappa$-discrete $n$-cells property for every $n \in \omega$.
By virtue of the discrete cells property, K.~Sakai and M.~Yaguchi's result was improved as follows \cite{Kos1}:

\begin{thm}\label{l2f}
A connected space $X$ is an $\ell_2^f(\kappa)$-manifold if and only if the following conditions are satisfied.
\begin{enumerate}
 \item $X$ is a strongly countable-dimensional, $\sigma$-locally compact ANR of density $\leq \kappa$.
 \item $X$ is strongly $\mathfrak{M}_0^{fd}$-universal.
 \item $X$ has the $\kappa$-discrete $n$-cells property for each $n \in \omega$.
 \item Every finite-dimensional compact subset of $X$ is a strong $Z$-set\footnote{The both conditions (4) of Theorems~\ref{l2f} and \ref{f_sigma} can be replaced with the one that $X$ is a strong $Z_\sigma$-set in itself.}.
\end{enumerate}
\end{thm}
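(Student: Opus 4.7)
The plan is to derive Theorem~\ref{l2f} as a direct application of Theorem~\ref{abs.char.} to the class $\mathfrak{C}=\bigoplus_\kappa\mathfrak{M}_0^{fd}$ and the absorbing set $\Omega=\ell_2^f(\kappa)\subset\ell_2(\kappa)$. The class $\bigoplus_\kappa\mathfrak{M}_0^{fd}$ is topological and closed hereditary since a closed subspace of a topological sum $\bigoplus_\gamma K_\gamma$ is the topological sum of its intersections with the finite-dimensional compacta $K_\gamma$, each of which is again a finite-dimensional compactum. The excerpt already records that $\ell_2^f(\kappa)$ is a $\bigoplus_\kappa\mathfrak{M}_0^{fd}$-absorbing set in $\ell_2(\kappa)$, so Theorem~\ref{abs.char.} is applicable, and the task reduces to matching its condition~(3) with (1)--(4) of Theorem~\ref{l2f}. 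The converse direction is routine: the model $\ell_2^f(\kappa)$ is strongly countable-dimensional, $\sigma$-locally compact, and a strong $Z_\sigma$-set in itself, so these local properties pass to any manifold modeled on it, while connectedness propagates the density bound $\leq\kappa$ from individual charts to the whole via standard chain arguments in metric spaces.

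The first translation is the membership hypothesis: a space $X$ of density $\leq\kappa$ lies in $(\bigoplus_\kappa\mathfrak{M}_0^{fd})_\sigma$ if and only if it is strongly countable-dimensional and $\sigma$-locally compact. For the nontrivial direction I would apply \cite[Proposition~5.1 and Remark~3]{Kos1} to write $X=\bigcup_n C_n$ with each $C_n$ closed and itself a topological sum of compacta, then intersect with the finite-dimensional closed layers $D_m$ witnessing strong countable-dimensionality. Each $C_n\cap D_m$ becomes a closed topological sum of finite-dimensional compacta, and collecting over $n,m$ exhibits $X$ as an element of $(\bigoplus_\kappa\mathfrak{M}_0^{fd})_\sigma$, with the number of summands bounded by $\dens X\leq\kappa$. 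The same decomposition writes $X$ as a countable union of finite-dimensional compacta, so condition~(4) directly yields that $X$ is a strong $Z_\sigma$-set in itself, matching Theorem~\ref{abs.char.}(3)(d) and accounting for the footnote equivalence.

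The main obstacle is promoting strong $\mathfrak{M}_0^{fd}$-universality (condition~(2)) together with the $\kappa$-discrete $n$-cells property (condition~(3)) to strong $\bigoplus_\kappa\mathfrak{M}_0^{fd}$-universality as required by Theorem~\ref{abs.char.}(3)(b). Given a map $f\colon\bigoplus_{\gamma<\lambda}A_\gamma\to X$ with each $A_\gamma$ a finite-dimensional compactum, $\lambda\leq\kappa$, and a closed set $B\subset\bigoplus_\gamma A_\gamma$ on which $f$ is a $Z$-embedding, the strategy has two stages. First, using simplicial triangulations of the $A_\gamma$ and iterated application of the $\kappa$-discrete $n$-cells property for $n$ ranging up to the relevant dimensions, together with an ANR selection argument in $X$, approximate $f$ arbitrarily closely by a map $f'$ whose image family $\{f'(A_\gamma)\}_{\gamma<\lambda}$ is discrete in $X$ while $f'|_B=f|_B$. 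Second, enclose each $f'(A_\gamma)$ in pairwise disjoint open sets $W_\gamma$ refining the given cover, and inside each $W_\gamma$ invoke strong $\mathfrak{M}_0^{fd}$-universality separately on $A_\gamma$ to refine $f'|_{A_\gamma}$ to a $Z$-embedding still matching on $A_\gamma\cap B$. Discreteness of the $W_\gamma$ lets the refinements be glued to the desired global $Z$-embedding, completing the upgrade and hence the proof.
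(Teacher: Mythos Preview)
The paper does not prove Theorem~\ref{l2f}; it is quoted from \cite{Kos1} as a known sharpening of the Sakai--Yaguchi characterization. Your plan to derive it from Theorem~\ref{abs.char.} with $\mathfrak{C}=\bigoplus_\kappa\mathfrak{M}_0^{fd}$ and $\Omega=\ell_2^f(\kappa)$ is sound in outline, and your translation of the membership hypothesis and of condition~(4) into the strong $Z_\sigma$-set condition is correct (note only that the decomposition gives $X$ as a countable union of \emph{discrete unions} of finite-dimensional compacta, so Proposition~\ref{Z-op.union}(2) is needed to conclude each layer is a strong $Z$-set).

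There is, however, a real gap in your first stage. You must make $\{f'(A_\gamma)\}_{\gamma<\lambda}$ discrete while fixing $f|_B$, where the compacta $A_\gamma\in\mathfrak{M}_0^{fd}$ may have \emph{unbounded} dimensions; your phrase ``iterated application \ldots\ for $n$ ranging up to the relevant dimensions'' presumes a finite bound that need not exist. Condition~(3) gives, via Corollary~\ref{DAP-g-hered.}, only the $\kappa$-discrete approximation property for $\mathfrak{M}_0^n$ for each fixed $n$, so Lemma~\ref{str.discr.approx.} delivers your stage~1 only when all $A_\gamma$ share a dimension bound. The paper's machinery closes this gap differently: once your two-stage argument yields strong $\bigoplus_\kappa\mathfrak{M}_0^n$-universality for every $n$ (which it does), one applies Proposition~\ref{str.univ.sigma} to obtain strong $\bigl(\bigcup_n\bigoplus_\kappa\mathfrak{M}_0^n\bigr)_\sigma$-universality, and then notes that $\bigoplus_\kappa\mathfrak{M}_0^{fd}\subset\bigl(\bigcup_n\bigoplus_\kappa\mathfrak{M}_0^n\bigr)_\sigma$ by stratifying summands by dimension. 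This is precisely the route used in the proof of $(5)\Rightarrow(4)$ and $(3)\Rightarrow(2)$ in Section~7. Your stage~2 is fine once stage~1 holds, but you should route the unbounded-dimension case through the $\sigma$-extension rather than through an unspecified limit process.
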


Let $\alpha$ be a countable ordinal.
Given a space $X$, let $\mathfrak{A}_0(X)$ be the collection of open sets in $X$, and let $\mathfrak{M}_0(X)$ be the one of closed sets in $X$.
By transfinite induction, for $\alpha \geq 1$, we define the collections $\mathfrak{A}_\alpha(X)$ and $\mathfrak{M}_\alpha(X)$ as follows:
 $$\mathfrak{A}_\alpha(X) = \Bigg\{\bigcup_{n \in \omega} A_n \Bigg| A_n \in \bigcup_{\gamma < \alpha} \mathfrak{M}_\gamma(X)\Bigg\} \text{ and } \mathfrak{M}_\alpha(X) = \Bigg\{\bigcap_{n \in \omega} A_n \Bigg| A_n \in \bigcup_{\gamma < \alpha} \mathfrak{A}_\gamma(X)\Bigg\}.$$
Let $\mathfrak{A}_\alpha$ be the class such that $X \in \mathfrak{A}_\alpha$ if $X \in \mathfrak{A}_\alpha(Y)$ for any space $Y$ containing $X$ as a subspace, and let $\mathfrak{M}_\alpha$ be the one such that $X \in \mathfrak{M}_\alpha$ if $X \in \mathfrak{M}_\alpha(Y)$ for any space $Y$ containing $X$ as a subspace.
We call $\mathfrak{A}_\alpha$ and $\mathfrak{M}_\alpha$ \textit{the absolute Borel classes}.
The absolute Borel classes are topological, closed hereditary and $\I$-stable.
Remark that $\mathfrak{A}_0 = \emptyset$.
Moreover, $\mathfrak{M}_0$ is the class of compact spaces,
 $\mathfrak{M}_1$ is the one of completely metrizable spaces,
 $\mathfrak{A}_1$ is the one of $\sigma$-locally compact spaces.
It is known that for $\alpha \geq 1$, there exist the absorbing sets $A_\alpha(\kappa)$ and $M_\alpha(\kappa)$ for the absolute Borel classes $\mathfrak{A}_\alpha(\kappa)$ and $\mathfrak{M}_\alpha(\kappa)$ in the Hilbert space $\ell_2(\kappa)$ respectively,
 see \cite{BeMo,SaY,Min}.
The spaces $\ell_2(\kappa) \times \ell_2^f(\aleph_0)$ and $\ell_2^f(\kappa) \times \I^\omega$, where $\I^\omega$ is the Hilbert cube,
 can be regard as absorbing sets for $\mathfrak{M}_1(\kappa)$ and $\mathfrak{A}_1(\kappa)$ in $\ell_2(\kappa)$ respectively.

A space $X$ is \textit{$\sigma$-completely metrizable} if it can be written as a countable union of completely metrizable, closed subspaces.
It is said that $X$ is \textit{locally density $\leq \kappa$} if every $x \in X$ has a neighborhood of density $\leq \kappa$.
A space is \textit{$\sigma$-locally density $\leq \kappa$} if it can be expressed as a countable union of closed subspaces that are locally density $\leq \kappa$.
Especially, when $\kappa = \aleph_0$,
 it is called to be \textit{$\sigma$-locally separable}.
Infinite-dimensional manifolds modeled on the product spaces $\ell_2(\kappa) \times \ell_2^f(\aleph_0)$, $\ell_2^f(\kappa) \times \ell_2(\aleph_0)$ and $\ell_2^f(\kappa) \times \I^\omega$ are characterized as follows,
 refer to \cite{Mog,SaY,Kos1,Sa11}\footnote{In \cite{Sa11}, their characterizations by the discrete cells property are given.}.

\begin{thm}
A connected space $X$ is an $(\ell_2(\kappa) \times \ell_2^f(\aleph_0))$-manifold if and only if the following conditions are satisfied.
\begin{enumerate}
 \item $X$ is a $\sigma$-completely metrizable ANR of density $\leq \kappa$.
 \item $X$ is strongly $\mathfrak{M}_1(\kappa)$-universal.
 \item $X$ has the $\kappa$-discrete $n$-cells property for any $n \in \omega$.
 \item $X$ is a strong $Z_\sigma$-set in itself.
\end{enumerate}
\end{thm}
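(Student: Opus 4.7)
The plan is to obtain this characterization as an immediate specialization of Theorem~\ref{abs.char.}, taking $\mathfrak{C} = \mathfrak{M}_1(\kappa)$, the absolute Borel class of completely metrizable spaces of density $\leq \kappa$, and $\Omega = \ell_2(\kappa) \times \ell_2^f(\aleph_0)$. First I would collect the ingredients already recorded above: the absolute Borel classes are topological and closed hereditary, so $\mathfrak{M}_1(\kappa)$ satisfies the hypotheses on $\mathfrak{C}$ in Theorem~\ref{abs.char.}; and by \cite{BeMo,SaY,Min} the product $\ell_2(\kappa) \times \ell_2^f(\aleph_0)$ is a $\mathfrak{M}_1(\kappa)$-absorbing set in $\ell_2(\kappa)$.

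Next I would reconcile the $\sigma$-class hypothesis in the two statements. For any space $X$ of density $\leq \kappa$, every closed subspace inherits density $\leq \kappa$, so a completely metrizable closed subspace of $X$ automatically lies in $\mathfrak{M}_1(\kappa)$; conversely, any member of $(\mathfrak{M}_1(\kappa))_\sigma$ has density at most $\aleph_0 \cdot \kappa = \kappa$. Consequently condition~(1) of the present statement---that $X$ is a $\sigma$-completely metrizable ANR of density $\leq \kappa$---is equivalent, for connected $X$, to the conjunction ``$X \in \mathfrak{C}_\sigma$ has density $\leq \kappa$ and is an ANR'' of Theorem~\ref{abs.char.} with $\mathfrak{C} = \mathfrak{M}_1(\kappa)$, once its condition (3)(a) is folded in. Conditions (2), (3), (4) here are literally the conditions (3)(b), (3)(c), (3)(d) of that theorem.

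Applying Theorem~\ref{abs.char.} then gives the equivalence of (1)--(4) with $X$ being an $\Omega$-manifold, i.e.\ an $(\ell_2(\kappa) \times \ell_2^f(\aleph_0))$-manifold. Since the whole argument is a direct translation of hypotheses, there is no substantive obstacle in the proof itself; the only nontrivial input is the fact, supplied by the cited literature, that $\ell_2(\kappa) \times \ell_2^f(\aleph_0)$ really is an $\mathfrak{M}_1(\kappa)$-absorbing set in $\ell_2(\kappa)$.
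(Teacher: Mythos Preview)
Your proposal is correct; the paper does not give its own proof of this statement but records it in Section~2 as a known result from the literature \cite{Mog,SaY,Kos1,Sa11}, with the discrete-cells formulation attributed to \cite{Sa11}. Deriving it as the specialization of Theorem~\ref{abs.char.} with $\mathfrak{C}=\mathfrak{M}_1(\kappa)$ and $\Omega=\ell_2(\kappa)\times\ell_2^f(\aleph_0)$ is exactly the intended relationship, since the paper explicitly presents Theorem~\ref{abs.char.} as a common generalization of these earlier characterizations.
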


\begin{thm}\label{g_delta}
A connected space $X$ is an $(\ell_2^f(\kappa) \times \ell_2(\aleph_0))$-manifold if and only if the following conditions are satisfied.
\begin{enumerate}
 \item $X$ is a $\sigma$-locally separable, $\sigma$-completely metrizable ANR of density $\leq \kappa$.
 \item $X$ is strongly $\mathfrak{M}_1(\aleph_0)$-universal.
 \item $X$ has the $\kappa$-discrete $n$-cells property for any $n \in \omega$.
 \item $X$ is a strong $Z_\sigma$-set in itself.
\end{enumerate}
\end{thm}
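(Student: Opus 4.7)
The plan is to derive Theorem~\ref{g_delta} from the Main Theorem by specializing $\kappa' = \aleph_0$ and $\mathfrak{C} = \mathfrak{M}_1$, the absolute Borel class of completely metrizable spaces, which is topological, closed hereditary, and $\I$-stable as already recorded. The excerpt identifies $\Omega := \ell_2(\aleph_0) \times \ell_2^f(\aleph_0)$ as an $\mathfrak{M}_1(\aleph_0)$-absorbing set in $\ell_2(\aleph_0)$, and the isometric identification $\ell_2(\kappa) \times \ell_2(\aleph_0) \cong \ell_2(\kappa)$ preserves finitely supported vectors, yielding the homeomorphism
\[
\ell_2^f(\kappa) \times \Omega \cong \ell_2^f(\kappa) \times \ell_2(\aleph_0) \times \ell_2^f(\aleph_0) \cong \ell_2^f(\kappa) \times \ell_2(\aleph_0).
\]
So $(\ell_2^f(\kappa) \times \Omega)$-manifolds are exactly $(\ell_2^f(\kappa) \times \ell_2(\aleph_0))$-manifolds, and Theorem~\ref{g_delta} will follow from the Main Theorem once the ambient-class hypothesis $X \in (\bigoplus_\kappa \mathfrak{M}_1(\aleph_0))_\sigma$ is reconciled with condition~(1).

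The crux is the equivalence, for an ANR $X$ of density $\leq \kappa$, between being $\sigma$-locally separable and $\sigma$-completely metrizable on the one hand, and membership in $(\bigoplus_\kappa \mathfrak{M}_1(\aleph_0))_\sigma$ on the other. For the nontrivial direction I would write $X = \bigcup_n F_n = \bigcup_m G_m$ with each $F_n$ closed and locally separable and each $G_m$ closed and completely metrizable, then set $H_{n,m} = F_n \cap G_m$; each $H_{n,m}$ is closed in $X$, locally separable, and completely metrizable. By the classical decomposition of locally separable metric spaces, $H_{n,m}$ splits topologically as $\bigoplus_\gamma Y_{n,m,\gamma}$ with each $Y_{n,m,\gamma}$ separable and clopen in $H_{n,m}$, hence separable and completely metrizable, i.e., in $\mathfrak{M}_1(\aleph_0)$; the bound $\dens(X) \leq \kappa$ forces at most $\kappa$ nonempty summands. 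Thus $H_{n,m} \in \bigoplus_\kappa \mathfrak{M}_1(\aleph_0)$ and $X \in (\bigoplus_\kappa \mathfrak{M}_1(\aleph_0))_\sigma$. The converse is immediate, since any topological sum of separable completely metrizable spaces is simultaneously locally separable and completely metrizable.

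With this equivalence in hand, both directions of Theorem~\ref{g_delta} read off from the Main Theorem. For the forward direction, a manifold $X$ modeled on $\ell_2^f(\kappa) \times \ell_2(\aleph_0)$ inherits locally from the model all the relevant properties, and by paracompactness one converts its local manifold cover into a countable closed $\bigoplus_\kappa \mathfrak{M}_1(\aleph_0)$-decomposition, placing $X$ in the ambient class and yielding conditions (1)--(4) via the Main Theorem. For the converse, conditions (1)--(4) imply the Main Theorem's hypotheses, whence $X$ is an $(\ell_2^f(\kappa) \times \Omega)$-manifold, i.e., an $(\ell_2^f(\kappa) \times \ell_2(\aleph_0))$-manifold. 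The main obstacle I expect is the decomposition step showing $H_{n,m}$ splits into at most $\kappa$ separable clopen summands: this combines local separability with the global density bound and must produce a genuine topological sum rather than merely a countable union. A minor side point is invoking the homeomorphism $\ell_2^f(\kappa) \times \ell_2^f(\aleph_0) \cong \ell_2^f(\kappa)$, which I would cite from the standard literature on pre-Hilbert spaces rather than re-prove.
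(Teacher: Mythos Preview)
Your proposal is correct and follows exactly the route the paper sketches in the Remark immediately after Theorem~\ref{f_sigma}: identify $\ell_2^f(\kappa)\times\ell_2(\aleph_0)$ with $\ell_2^f(\kappa)\times\Omega$ for $\Omega=\ell_2^f(\aleph_0)\times\ell_2(\aleph_0)$, an $\mathfrak{M}_1(\aleph_0)$-absorbing set, and then invoke the Main Theorem. The paper does not prove Theorem~\ref{g_delta} in the body---it is stated as a known result with citations to \cite{Mog,SaY,Kos1,Sa11}---and the Remark only outlines the reduction in two sentences, so your discussion of the ambient-class equivalence (showing that $\sigma$-locally separable plus $\sigma$-completely metrizable plus density $\leq\kappa$ is the same as membership in $(\bigoplus_\kappa\mathfrak{M}_1(\aleph_0))_\sigma$) fills in a step the paper leaves implicit.

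One point worth making explicit: both the paper's Remark and your argument require $\kappa>\aleph_0$, since the Main Theorem assumes $\kappa'<\kappa$ and here $\kappa'=\aleph_0$. The paper says this outright (``In the case that $\kappa>\aleph_0$''), whereas you do not; the separable case $\kappa=\aleph_0$ falls back on the cited literature rather than on the Main Theorem.
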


\begin{thm}\label{f_sigma}
A connected space $X$ is an $(\ell_2^f(\kappa) \times \I^\omega)$-manifold if and only if the following conditions are satisfied.
\begin{enumerate}
 \item $X$ is a $\sigma$-locally compact ANR of density $\leq \kappa$.
 \item $X$ is strongly $\mathfrak{M}_0$-universal.
 \item $X$ has the $\kappa$-discrete $n$-cells property for any $n \in \omega$.
 \item Each compact subset of $X$ is a strong $Z$-set\footnotemark[1].
\end{enumerate}
\end{thm}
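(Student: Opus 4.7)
The plan is to derive Theorem~\ref{f_sigma} from Theorem~\ref{abs.char.} applied to the class $\mathfrak{C} = \mathfrak{A}_1(\kappa)$ of $\sigma$-locally compact spaces of density $\leq \kappa$ together with the absorbing set $\Omega = \ell_2^f(\kappa) \times \I^\omega$, which the Introduction identifies as an $\mathfrak{A}_1(\kappa)$-absorbing set in $\ell_2(\kappa)$. The class $\mathfrak{A}_1(\kappa)$ is visibly topological and closed hereditary, and $(\mathfrak{A}_1(\kappa))_\sigma = \mathfrak{A}_1(\kappa)$, so the ambient hypothesis of Theorem~\ref{abs.char.} matches condition~(1) here exactly. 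The necessity direction is standard: each of (1)--(4) holds for the model $\ell_2^f(\kappa) \times \I^\omega$ itself (it is an ANR of density $\leq \kappa$, $\sigma$-locally compact as a product of a $\sigma$-locally compact space with a compactum, strongly $\mathfrak{M}_0$-universal via the $\I^\omega$-factor, has the $\kappa$-discrete $n$-cells property via the $\ell_2^f(\kappa)$-factor, and every compact subset is a strong $Z$-set), and each of these is local in nature and therefore propagates to any manifold modeled on this space.

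For the sufficiency direction I would verify the four hypotheses of Theorem~\ref{abs.char.}. Conditions (3a) and (3c) of that theorem are our (1) and (3), and (3d) can be extracted from our (4) by writing $X$ as a countable union of closed $\sigma$-compact pieces and applying (4) to each compact summand, which is the content of the footnote. The one substantial step is (3b), namely promoting strong $\mathfrak{M}_0$-universality (our (2)) to strong $\mathfrak{A}_1(\kappa)$-universality. For this I rely on the discrete approximation machinery developed in Section~\ref{D(LF)AP}. Given a map $f \colon A \to X$ with $A \in \mathfrak{A}_1(\kappa)$, decompose $A = \bigcup_{n \in \omega} A_n$ with each $A_n$ closed and of the form $A_n = \bigoplus_{\gamma < \kappa} A_{n,\gamma}$, $A_{n,\gamma}$ compact, using the remark from \cite{Kos1} recalled in the Introduction. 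The $\kappa$-discrete $n$-cells property together with the ANR structure should upgrade to the $\kappa$-discrete (locally finite) approximation property for $\mathfrak{M}_0$, so that $f|_{A_n}$ can be replaced by a nearby map whose images of the summands $A_{n,\gamma}$ are locally finite in $X$; strong $\mathfrak{M}_0$-universality is then applied to each $A_{n,\gamma}$ inside pairwise disjoint open sets to produce local $Z$-embeddings, which are patched and iterated over $n$ by a Baire-category style induction to yield a global $Z$-embedding close to $f$ and agreeing with $f$ on a prescribed $Z$-embedded closed subset $B \subset A$.

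The main obstacle is precisely this promotion step: passing from a general-position property stated only for the cells $\I^n$ to a locally finite approximation property for arbitrary compacta, and then to a full strong universality statement for $\sigma$-locally compact spaces. The natural technique is to approximate each compact $A_{n,\gamma}$ by a finite polyhedron fine enough that its image sits in a prescribed small open set, apply the discrete $n$-cells property to this polyhedral model, and lift the approximation back into $X$ via small homotopies provided by the ANR structure, maintaining mesh control relative to the given open cover $\mathcal{U}$ and agreement with $f$ on $B$ throughout. The delicate points are preserving the $Z$-embedding condition across countably many inductive stages and ensuring that the per-stage locally finite families of images actually assemble into a globally discrete family in $X$; arranging this cleanly in the non-separable setting is what Section~\ref{D(LF)AP} is designed to handle.
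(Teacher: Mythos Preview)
The paper does not give a self-contained proof of Theorem~\ref{f_sigma}; it is stated with references \cite{Mog,SaY,Kos1,Sa11}, and the Remark following it explains that for $\kappa > \aleph_0$ the result is a corollary of the Main Theorem (via $\ell_2^f(\kappa)\times\I^\omega \cong \ell_2^f(\kappa)\times(\ell_2^f(\aleph_0)\times\I^\omega)$ and the fact that $\ell_2^f(\aleph_0)\times\I^\omega$ is an $\mathfrak{M}_0$-absorbing set in $\ell_2(\aleph_0)$). Your plan to apply Theorem~\ref{abs.char.} directly with $\mathfrak{C}=\mathfrak{A}_1(\kappa)$ is equivalent in spirit, since $\mathfrak{A}_1(\kappa)=(\bigoplus_\kappa\mathfrak{M}_0)_\sigma$ and the relevant absorbing sets coincide.

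The real issue is your promotion step. You write that the $\kappa$-discrete $n$-cells property ``should upgrade'' to the $\kappa$-discrete approximation property for $\mathfrak{M}_0$, and you propose to do this by approximating each compactum by a finite polyhedron. But Corollary~\ref{DAP-g-hered.} only yields the $\kappa$-discrete approximation property for $\mathfrak{M}_0^n$, for each fixed $n$; when you replace a family $\{K_\gamma\}_{\gamma<\kappa}$ of arbitrary compacta by nerve polyhedra, their dimensions are in general unbounded, and no single $n$-cells property applies to the whole family at once. The paper's Section~7 proof of the equivalences $(5)\Rightarrow(4)\Rightarrow(1)\Leftrightarrow(3)$ circumvents this by a different route: from discrete $n$-cells one gets strong $(\bigcup_n\bigoplus_\kappa\mathfrak{M}_0^n)_\sigma$-universality (via Lemma~\ref{str.discr.approx.} and Proposition~\ref{str.univ.sigma}), hence the $\kappa$-discrete approximation property for the single space $\{\Omega\}$ using that $\Omega$ is approximable by a metric polyhedron; then, working locally on contractible open sets, Lemma~\ref{loc.fin.approx.} with $C=\Omega$ (which contains closed copies of every member of $\mathfrak{C}(\kappa')$) gives the $\kappa$-locally finite approximation property for $\mathfrak{C}(\kappa')$, and Lemma~\ref{LFAP-DAP} together with $\kappa'<\kappa$ upgrades this to discrete. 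This use of $\Omega$ as a universal ``container'' is the missing idea in your sketch; without it, the passage from finite-dimensional cells to arbitrary compacta is not justified. Note also that this route genuinely requires $\kappa>\aleph_0$ (for Lemma~\ref{LFAP-DAP} via Proposition~\ref{Z-DCP}(1)), which is why the Remark restricts to that case; for $\kappa=\aleph_0$ one must appeal to the separable arguments in \cite{Kos1}.
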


\begin{remark}
In the case that $k > \aleph_0$, Theorems~\ref{g_delta} and \ref{f_sigma} can be obtained as corollaries of Main Theorem.
Indeed, the space $\ell_2^f(\kappa) \times \ell_2(\aleph_0)$ is homeomorphic to $\ell_2^f(\kappa) \times \ell_2^f(\aleph_0) \times \ell_2(\aleph_0)$,
 and $\ell_2^f(\aleph_0) \times \ell_2(\aleph_0)$ is homeomorphic to an $\mathfrak{M}_1(\aleph_0)$-absorbing set in $\ell_2(\aleph_0)$.
Moreover, the space $\ell_2^f(\kappa) \times \I^\omega$ is homeomorphic to $\ell_2^f(\kappa) \times \ell_2^f(\aleph_0) \times \I^\omega$,
 and $\ell_2^f(\aleph_0) \times \I^\omega$ is homeomorphic to an $\mathfrak{M}_0$-absorbing set in $\ell_2(\aleph_0)$.
\end{remark}

Let denote $\mathfrak{A}_\alpha$ and $\mathfrak{M}_\alpha$ by $\mathfrak{B}_\alpha$, and $A_\alpha(\kappa)$ and $M_\alpha(\kappa)$ by $\Omega_\alpha(\kappa)$ for simplicity.
For $\alpha \geq 2$, M.~Bestvina, J.~Mogilski, and K.~Mine \cite{BeMo,Min} gave characterizations to manifolds modeled on the absorbing sets $\Omega_\alpha(\kappa)$ for the absolute Borel classes $\mathfrak{B}_\alpha(\kappa)$ in the Hilbert space $\ell_2(\kappa)$ as follows:

\begin{thm}
For a countable ordinal $\alpha \geq 2$, a connected space $X$ is an $\Omega_\alpha(\kappa)$-manifold if and only if the following conditions hold.
\begin{enumerate}
 \item $X$ is an ANR and $X \in \mathfrak{B}_\alpha(\kappa)$.
 \item $X$ is strongly $\mathfrak{B}_\alpha(\kappa)$-universal.
 \item $X$ is a strong $Z_\sigma$-set in itself.
\end{enumerate}
\end{thm}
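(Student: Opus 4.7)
The plan is to reduce this characterization to Theorem~\ref{abs.char.} applied with $\mathfrak{C} = \mathfrak{B}_\alpha(\kappa)$. The absolute Borel classes are topological and closed hereditary, as noted earlier, and $\Omega_\alpha(\kappa)$ is by construction a $\mathfrak{B}_\alpha(\kappa)$-absorbing set in $\ell_2(\kappa)$, so Theorem~\ref{abs.char.} applies to this data. The forward direction then follows from the implication $(1)\Rightarrow(3)$ of that theorem together with a routine verification that an $\Omega_\alpha(\kappa)$-manifold $X$ is itself of density $\leq \kappa$ and lies in $\mathfrak{B}_\alpha(\kappa)$; this uses closure of $\mathfrak{B}_\alpha$ under the countable operations needed to patch a manifold together from a countable family of open pieces homeomorphic to open subsets of $\Omega_\alpha(\kappa)$.

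For the converse, I need to derive the $\kappa$-discrete $n$-cells property (condition (c) of Theorem~\ref{abs.char.}) from the three assumptions listed, since the remaining conditions (a), (b), (d) match directly. The key point is that for $\alpha \geq 2$ the topological sum $\bigoplus_{\gamma<\kappa}\I^n$ belongs to $\mathfrak{B}_\alpha(\kappa)$: it is locally compact and of density $\leq \kappa$, hence lies in $\mathfrak{M}_1 \cap \mathfrak{A}_1$, and therefore in every absolute Borel class of index $\geq 2$ with density bound $\kappa$. Given a map $f : \bigoplus_{\gamma<\kappa}\I^n_\gamma \to X$ and an open cover $\mathcal{U}$ of $X$, strong $\mathfrak{B}_\alpha(\kappa)$-universality (with $B = \emptyset$) produces a $Z$-embedding $g$ that is $\mathcal{U}$-close to $f$. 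Since $Z$-sets are closed, the image $g(\bigoplus_\gamma \I^n_\gamma)$ is closed in $X$ and, via the homeomorphism $g$, carries the topological sum topology; consequently for each $\gamma_0 < \kappa$ the set $\bigcup_{\gamma\neq\gamma_0} g(\I^n_\gamma)$ is closed in $X$, so every point either lies in a neighborhood disjoint from the whole image or in the neighborhood $X \setminus \bigcup_{\gamma\neq\gamma_0} g(\I^n_\gamma)$ meeting only $g(\I^n_{\gamma_0})$. This gives the required discreteness of $\{g(\I^n_\gamma) \mid \gamma<\kappa\}$.

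The main obstacle I foresee is purely bookkeeping, namely checking that an $\Omega_\alpha(\kappa)$-manifold truly belongs to $\mathfrak{B}_\alpha(\kappa)$ so that condition (1) of the statement holds in the forward direction. This should be a standard countable-cover argument given that $\mathfrak{B}_\alpha$ is closed under the appropriate countable operations for $\alpha \geq 2$, but it is the only point where the verification is not entirely automatic from Theorem~\ref{abs.char.} and the observation that strong $\mathfrak{B}_\alpha(\kappa)$-universality already subsumes the discrete $n$-cells property via the topological sum argument above.
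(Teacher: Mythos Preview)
The paper does not give its own proof of this statement; it is quoted as a known result of Bestvina--Mogilski \cite{BeMo} and Mine \cite{Min}. Your proposal to derive it from Theorem~\ref{abs.char.} is correct and is exactly the kind of reduction the paper is set up to permit, since Theorem~\ref{abs.char.} is presented as a common generalization of these earlier characterizations.

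Your main observation---that for $\alpha\geq 2$ the sum $\bigoplus_{\gamma<\kappa}\I^n$ lies in $\mathfrak{A}_1(\kappa)\cap\mathfrak{M}_1(\kappa)\subset\mathfrak{B}_\alpha(\kappa)$, so strong $\mathfrak{B}_\alpha(\kappa)$-universality yields a closed embedding of it, whence discreteness of the images---is correct; the paper makes the analogous remark just before Theorem~\ref{l2f}. Your discreteness argument from the closed embedding is also fine. One small technical point: Theorem~\ref{abs.char.} takes $X\in\mathfrak{C}_\sigma$ of density $\leq\kappa$ as a \emph{hypothesis}, so for the forward direction you must verify $X\in\mathfrak{B}_\alpha(\kappa)$ before invoking $(1)\Rightarrow(3)$; you do flag this. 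The verification is standard once one uses a $\sigma$-discrete open refinement (paracompactness) and the fact, noted in the remark following Corollary~\ref{sep.}, that $\mathfrak{B}_\alpha(\kappa)=\bigoplus_\kappa\mathfrak{B}_\alpha(\kappa)$ for $\alpha\geq 2$, so discrete unions of members remain in the class; combined with closure under countable unions (for $\mathfrak{A}_\alpha$) or a dual argument (for $\mathfrak{M}_\alpha$), this handles the patching.
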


As a corollary of Main Theorem, we have the following:

\begin{cor}\label{sep.}
Let $\aleph_0 \leq \kappa' < \kappa$ and $\alpha \geq 2$ be a countable ordinal.
A connected space $X$ is an $(\ell_2^f(\kappa) \times \Omega_\alpha(\kappa'))$-manifold if and only if the following conditions are satisfied.
\begin{enumerate}
 \item $X$ is a $\sigma$-locally density $\leq \kappa'$ ANR and $X \in \mathfrak{B}_\alpha(\kappa)$.
 \item $X$ is strongly $\mathfrak{B}_\alpha(\kappa')$-universal.
 \item $X$ has the $\kappa$-discrete $n$-cells property for every $n \in \omega$.
 \item $X$ is a strong $Z_\sigma$-set in itself.
\end{enumerate}
\end{cor}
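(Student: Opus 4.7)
The plan is to derive the corollary as a direct specialization of the Main Theorem with $\mathfrak{C} = \mathfrak{B}_\alpha$ and $\Omega = \Omega_\alpha(\kappa')$. The paper has already recorded that the absolute Borel classes $\mathfrak{B}_\alpha$ are topological, closed hereditary, and $\I$-stable, and $\Omega_\alpha(\kappa')$ is by assumption a $\mathfrak{B}_\alpha(\kappa')$-absorbing set in $\ell_2(\kappa')$, so the hypotheses of the Main Theorem hold. Conditions (2), (3), and (4) of the corollary coincide verbatim with those of the Main Theorem, so it suffices to show that condition (1) of the corollary --- $X$ is a $\sigma$-locally density $\leq \kappa'$ ANR with $X \in \mathfrak{B}_\alpha(\kappa)$ --- is equivalent, under the ANR property, to the Main Theorem's standing hypothesis $X \in (\bigoplus_\kappa \mathfrak{B}_\alpha(\kappa'))_\sigma$.

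For the forward implication, suppose $X = \bigcup_{n \in \omega} A_n$ with each $A_n$ closed in $X$ and $A_n = \bigoplus_{\gamma < \lambda_n} A_n^\gamma$ for some $\lambda_n \leq \kappa$ and $A_n^\gamma \in \mathfrak{B}_\alpha(\kappa')$. Each summand $A_n^\gamma$ is clopen in $A_n$ of density $\leq \kappa'$, so $A_n$ is locally density $\leq \kappa'$ and hence $X$ is $\sigma$-locally density $\leq \kappa'$. The topological-sum closure of $\mathfrak{B}_\alpha$ together with the density bound $\max(\lambda_n, \kappa') \leq \kappa$ places each $A_n$ in $\mathfrak{B}_\alpha(\kappa)$, and since $\alpha \geq 2$ the inclusions $\mathfrak{A}_1 \subset \mathfrak{M}_2 \subset \mathfrak{B}_\alpha$ coming from the nesting of the Borel hierarchy ensure that the countable union $X$ of closed subsets of $\mathfrak{B}_\alpha$-type is again in $\mathfrak{B}_\alpha(\kappa)$.

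For the reverse implication, write $X = \bigcup_n F_n$ with each $F_n$ closed in $X$ and locally density $\leq \kappa'$; closed hereditariness places each $F_n$ in $\mathfrak{B}_\alpha(\kappa)$. The essential step is to decompose each $F_n$ as a topological sum $F_n = \bigoplus_{\gamma < \lambda_n} F_n^\gamma$ with $\lambda_n \leq \kappa$ and each $F_n^\gamma$ a clopen subspace of density $\leq \kappa'$; closed hereditariness then gives $F_n^\gamma \in \mathfrak{B}_\alpha(\kappa')$, so $X \in (\bigoplus_\kappa \mathfrak{B}_\alpha(\kappa'))_\sigma$ and the Main Theorem applies.

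The main obstacle is this decomposition lemma: a metrizable space of local density $\leq \kappa'$ is a topological sum of at most $\kappa$ clopen subspaces of density $\leq \kappa'$. The plan is to fix a locally finite open refinement $\mathcal{V}$ of an open cover of $F_n$ by sets of density $\leq \kappa'$ (using the paracompactness of metric spaces), then introduce the equivalence relation on $F_n$ generated by ``two points lie in a common member of $\mathcal{V}$.'' Equivalence classes are clopen by construction, and the density of each class is controlled by noting that any fixed $V \in \mathcal{V}$ meets at most $\kappa'$ other members --- since a dense subset of $V$ of size $\kappa'$ lies in only finitely many members of $\mathcal{V}$ by local finiteness --- so iterating the chain construction countably many times bounds the number of members of $\mathcal{V}$ in any class, and hence its density, by $\kappa'$.
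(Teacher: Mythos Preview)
Your proposal is correct and follows the paper's intended route: the paper presents this corollary as a direct consequence of the Main Theorem (with $\mathfrak{C}=\mathfrak{B}_\alpha$ and $\Omega=\Omega_\alpha(\kappa')$) and gives no further proof. You have correctly supplied the only nontrivial step left implicit, namely that for $\alpha\ge 2$ the condition $X\in(\bigoplus_\kappa\mathfrak{B}_\alpha(\kappa'))_\sigma$ is equivalent to $X\in\mathfrak{B}_\alpha(\kappa)$ together with $\sigma$-local density $\le\kappa'$, via the standard chain-component decomposition and the closure properties of the absolute Borel classes.
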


\begin{remark}
Consider the case that $\kappa = \kappa' \geq \aleph_0$.
If $\mathfrak{C}(\kappa) = \bigoplus_\kappa \mathfrak{C}(\kappa)$,
 then $\ell_2^f(\kappa) \times \Omega$ is homeomorphic to $\Omega$.
Especially, when $\mathfrak{C} = \mathfrak{B}_\alpha$, $\alpha \geq 2$,
 the space $\ell_2^f(\kappa) \times \Omega_\alpha(\kappa)$ is homeomorphic to $\Omega_\alpha(\kappa)$.
\end{remark}

\section{$Z$-set and the strong universality}

In this section, we list some results on $Z$-set and the strong universality.
The following properties of (strong) $Z$-sets in ANRs are useful, see Corollary~1.4.5 and Exercise~1.4.1 of \cite{BRZ}.

\begin{prop}\label{Z-op.union}
Let $X$ be an ANR.
\begin{enumerate}
 \item For every (strong) $Z$-set $A$ in $X$ and every open subset $U$ of $X$, $A \cap U$ is a (strong) $Z$-set in $U$.
 \item A locally finite union of (strong) $Z$-sets in $X$ is a (strong) $Z$-set.
\end{enumerate}
\end{prop}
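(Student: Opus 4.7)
The plan combines two standard ANR tools, localization and locally finite assembly, to reduce both claims to the defining property of $Z$-sets.

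For part (1), let $A$ be a $(\text{strong})$ $Z$-set in $X$, let $U \subseteq X$ be open, and let $\mathcal{V}$ be an open cover of $U$. I would build an open cover $\mathcal{U}$ of $X$ whose trace on $U$ refines $\mathcal{V}$ but whose mesh shrinks fast enough as one approaches $\bd U$, measured in some admissible metric by the gauge $\varepsilon(x) = \min\{1,\dist(x, X\setminus U)\}$. Feeding $\mathcal{U}$ into the defining property of $A$ yields $f : X \to X$ that is $\mathcal{U}$-close to $\id_X$ with $f(X)\cap A = \emptyset$ (respectively $\cl f(X)\cap A=\emptyset$). The mesh condition near $\bd U$ forces $f(U)\subseteq U$, so $f|_U$ witnesses that $A\cap U$ is a $(\text{strong})$ $Z$-set in $U$; in the strong case one uses $\cl(f|_U(U))\cap (A\cap U) \subseteq \cl f(X)\cap A = \emptyset$.

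For part (2), let $\{A_\gamma\}_{\gamma\in\Gamma}$ be a locally finite family of $(\text{strong})$ $Z$-sets with union $A$, and let $\mathcal{U}$ be an open cover of $X$. First, I would upgrade the $Z$-set property to a localized form: for each $Z$-set $B\subseteq X$, each open neighborhood $W\supseteq B$, and each open cover of $X$, there is a push-off $X\to X$ equal to $\id_X$ outside $W$ and missing $B$. This upgrade follows from (1) applied to $W$ (again an ANR), combined with the homotopy extension theorem to glue the push to $\id_X$ across $\bd W$. Next, by the swelling lemma for locally finite families of closed sets in a normal space I would choose a locally finite open family $\{W_\gamma\}$ with $A_\gamma\subseteq W_\gamma$ and each $W_\gamma$ lying in some member of a star-refinement of $\mathcal{U}$. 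For each $\gamma$ the upgrade yields $f_\gamma$ supported in $W_\gamma$ and pushing $X$ off $A_\gamma$. Well-ordering $\Gamma$, define $f$ by pointwise composition of the $f_\gamma$'s; local finiteness of $\{W_\gamma\}$ reduces this, on a neighborhood of each point, to a finite composition of maps each supported in some $W_\gamma$, so $f$ is continuous and still $\mathcal{U}$-close to $\id_X$.

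The main obstacle is ensuring the composite simultaneously avoids every $A_\beta$, not merely the one pushed off last: a later $f_\gamma$ might in principle drag a point back into an $A_\beta$ already vacated. I would control this by allotting each $f_\gamma$ a tolerance smaller than the clearance $\dist(f_\beta(\cdot), A_\beta)$ already established by $f_\beta$ within $W_\beta$, and using that only finitely many $W_\gamma$ meet any given point to turn these finitely many inequalities into a well-defined bookkeeping. The strong $Z$-set version is handled in parallel: at each stage arrange $\cl f_\gamma(X)\cap A_\gamma=\emptyset$, and use local finiteness of $\{W_\gamma\}$ to pass closures through the composition, giving $\cl f(X)\cap A=\emptyset$.
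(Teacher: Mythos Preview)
The paper does not prove this proposition; it merely cites Corollary~1.4.5 and Exercise~1.4.1 of \cite{BRZ}. So there is no in-paper argument to compare against, and the relevant question is whether your outline is sound on its own.

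In part~(1) the key step fails. You need an open cover $\mathcal{U}$ of all of $X$ whose members containing a point $x\in U$ have diameter below $\varepsilon(x)=\dist(x,X\setminus U)$, so that the resulting $f$ satisfies $f(U)\subset U$. But any open set containing a boundary point $y\in\bd U$ necessarily meets $U$ and then contains points $x\in U$ with $\varepsilon(x)$ arbitrarily small; no set of positive diameter can satisfy your constraint at such $x$, so $\bd U$ cannot be covered and the cover you describe does not exist. The standard repair passes through the homotopy characterization (stated later in the paper as Lemma~\ref{Z-homot.dense}): take $h:X\times\I\to X$ with $h_0=\id$ and $h(X\times(0,1])\cap A=\emptyset$, choose a continuous $\sigma:U\to(0,1]$ so small that $h(\{x\}\times[0,\sigma(x)])$ stays inside $U$ and inside a member of $\mathcal{V}$, and set $g(x)=h(x,\sigma(x))$.

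In part~(2) there are two further problems. First, you ask that each $W_\gamma$ lie inside a member of a star-refinement of $\mathcal{U}$; since $A_\gamma\subset W_\gamma$ and nothing bounds the size of $A_\gamma$, this is impossible in general. You must instead control the displacement of each $f_\gamma$ by a refinement of $\mathcal{U}$, independently of the size of its support. Second, your transfinite composition is only locally a finite product if you also know $f_\gamma(W_\gamma)\subset W_\gamma$; otherwise $f_{\gamma_1}(x)$ may fall into some $W_{\gamma_2}$ with $x\notin W_{\gamma_2}$, and the set of indices acting nontrivially on the orbit of $x$ is not determined by which $W_\gamma$ contain $x$. Your homotopy-extension gluing does not deliver this invariance (nor, strictly speaking, the identity outside $W_\gamma$). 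Both issues are repaired by the same device as in part~(1): build $f_\gamma$ on $W_\gamma$ via a reparametrized homotopy $W_\gamma\times\I\to W_\gamma$ whose time parameter tends to $0$ at $\bd W_\gamma$, and extend by the identity; this automatically gives $f_\gamma(W_\gamma)\subset W_\gamma$ and $f_\gamma|_{X\setminus W_\gamma}=\id$, after which your local-finiteness and bookkeeping arguments go through.
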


Due to \cite[Lemma~3.3 and Proposition~3.4]{SaY} (cf.~Corollary~1.8 in \cite{BeMo}) and \cite[Proposition~3.11]{SaY}, we have the following proposition.

\begin{prop}\label{Z-DCP}
Suppose that $X$ is an ANR with the $\kappa$-discrete $n$-cells property for all $n \in \omega$.
\begin{enumerate}
 \item If a closed subset $A \subset X$ is compact or of density $< \kappa$,
 then $A$ is a $Z$-set in $X$.
 \item If $A$ is a $Z$-set and a strong $Z_\sigma$-set in $X$,
 then $A$ is a strong $Z$-set.
\end{enumerate}
\end{prop}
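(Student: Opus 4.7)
The plan is to address the two assertions separately, using standard absorption arguments from infinite-dimensional topology.

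For (1), fix an open cover $\mathcal{U}$ of $X$. Since $X$ is an ANR, I would first invoke the usual nerve construction to produce a locally finite simplicial complex $K$ together with maps $\phi : X \to |K|$ and $\psi : |K| \to X$ whose composition $\psi \circ \phi$ is $\mathcal{U}$-close to $\id_X$, arranging that each open star in $K$ maps into a member of some star-refinement $\mathcal{V}$ of $\mathcal{U}$. The heart of the argument is then an inductive push-off of $\psi$ over the skeleta of $K$. At dimension $n$, the family $\{\psi|_{\sigma_\gamma^n}\}_\gamma$ is indexed by at most $\kappa$ simplices, and the $\kappa$-discrete $n$-cells property supplies an approximation making the images a discrete family in $X$; a further small modification, supported in stars of $\mathcal{V}$, routes each image around $A$. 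When $A$ is compact this step is immediate since $A$ meets only finitely many stars of a sufficiently fine cover; when $A$ has density $<\kappa$ one uses that fewer than $\kappa$ cells actually encounter $A$, so they can be handled individually while the remaining $\kappa$-indexed discrete family is left intact. Composing with $\phi$ then yields the desired map $f:X\to X$ with $f(X)\cap A=\emptyset$.

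For (2), I would argue by iterated approximation. Write $A=\bigcup_{n\in\omega}A_n$ with each $A_n$ a closed strong $Z$-set. Fix a compatible metric $d$ on $X$ and, given an open cover $\mathcal{U}$, choose a sequence of open covers $\mathcal{U}_0\succ\mathcal{U}_1\succ\cdots$ of mesh $\varepsilon_n\downarrow 0$ together with a companion sequence of gaps $\delta_n>0$ satisfying $\sum_{k>n}\varepsilon_k<\delta_n$. Put $f_{-1}=\id_X$, and use that $A$ is a $Z$-set to obtain $f_0$ that is $\mathcal{U}_0$-close to $\id_X$ with $f_0(X)\cap A=\emptyset$. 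At step $n\geq 1$, appeal to the strong $Z$-set property of $A_n$ to produce $f_n$ that is $\mathcal{U}_n$-close to $f_{n-1}$ and satisfies $\dist(f_n(X),A_n)>\delta_n$. The metric control ensures that $f:=\lim_n f_n$ exists, is $\mathcal{U}$-close to $\id_X$, and satisfies $\dist(f(X),A_n)>0$ for every $n$. Consequently $\cl{f(X)}$ misses each $A_n$, hence misses $A$, proving that $A$ is a strong $Z$-set.

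The main obstacle in (1) is the joint control in the skeletal induction: one must preserve the discreteness obtained at lower dimensions while invoking the $\kappa$-discrete $n$-cells property at the next, and simultaneously upgrade near-disjointness to genuine disjointness from $A$. The cardinality bookkeeping that separates the $<\kappa$ cells meeting $A$ from the rest is the technical crux. In (2), the delicate point is the quantitative refinement scheme: the radii $\varepsilon_n$ and gaps $\delta_n$ must be chosen in tandem so that the positive separation $\dist(f_n(X),A_n)>\delta_n$ established at stage $n$ survives all subsequent perturbations of accumulated size at most $\sum_{k>n}\varepsilon_k$, thereby upgrading each individual $Z$-set property to the strong form in the limit.
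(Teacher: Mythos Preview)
The paper does not give an argument here; it records the proposition as a direct consequence of Lemma~3.3, Proposition~3.4 and Proposition~3.11 of \cite{SaY} (cf.\ Corollary~1.8 of \cite{BeMo}). So there is no in-paper proof to compare against, and your sketch must be judged on its own.

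Your plan for (1) has a real gap. After the nerve factorization and the skeletal use of the $\kappa$-discrete $n$-cells property you obtain a discrete family of cell-images, and you then assert that the (finitely many, or $<\kappa$) cells meeting $A$ ``can be handled individually'' by a further small modification routing them around $A$. But pushing a \emph{single} prescribed map $\I^n\to X$ off $A$ is exactly the $Z$-set property of $A$ you are trying to establish; nothing in the hypothesis lets you do this for one cell while keeping the rest of the family fixed and discrete. The argument behind the cited references is shorter and avoids the nerve altogether: since $X$ is an ANR, it suffices to approximate an arbitrary map $f:\I^n\to X$ by one with image in $X\setminus A$. Apply the $\kappa$-discrete $n$-cells property to $\kappa$ \emph{identical} copies of $f$ to get maps $g_\gamma$, $\gamma<\kappa$, each $\mathcal{U}$-close to $f$, with $\{g_\gamma(\I^n)\}_{\gamma<\kappa}$ discrete in $X$. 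If $A$ is compact it meets only finitely many members of a discrete family; if $A$ has density $<\kappa$ then (weight $=$ Lindel\"of number $=$ density in metrizable spaces) $A$ meets fewer than $\kappa$ members. In either case some $g_{\gamma_0}(\I^n)$ misses $A$, and $g_{\gamma_0}$ is the required approximation.

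Your plan for (2) is the right strategy, but the bookkeeping is off. The strong $Z$-set property of $A_n$ gives a map with $\cl{f_n(X)}\cap A_n=\emptyset$; it does \emph{not} give $\dist(f_n(X),A_n)>\delta_n$ for a preassigned constant $\delta_n$, since disjoint closed sets in a metric space may have distance zero. The repair is to work with open covers rather than uniform constants: after $f_n$ is built, choose an open $W_n$ with $\cl{f_n(X)}\subset W_n\subset\cl{W_n}\subset X\setminus A_n$, and select the next cover $\mathcal{U}_{n+1}$ fine enough that any map $\mathcal{U}_{n+1}$-close to $f_n$ still has image in $W_n$, together with mesh $<2^{-n}$ to force convergence. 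With this interleaved choice the limit $f=\lim f_n$ is $\mathcal{U}$-close to $\id_X$, has $f(X)\subset W_n$ for every $n$, and hence $\cl{f(X)}\subset\cl{W_n}\subset X\setminus A_n$ for every $n$, so $\cl{f(X)}\cap A=\emptyset$.
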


Using Theorem~2.6.5 of \cite{Sa10}, we can show the following:

\begin{prop}\label{str.Z_sigma-g-hered.}
For any ANR $X$, if each point $x \in X$ has an open neighborhood that is a strong $Z_\sigma$-set in itself,
 then $X$ is also a strong $Z_\sigma$-set in itself.
\end{prop}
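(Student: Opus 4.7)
The plan is to decompose $X$ as a countable locally finite union of sets that are each strong $Z$-sets in $X$. First, for every $x \in X$, choose an open neighborhood $U_x$ together with a decomposition $U_x = \bigcup_{n \in \omega} A_{x,n}$ where each $A_{x,n}$ is a strong $Z$-set in $U_x$. Since $X$ is metrizable and hence paracompact, I would take a locally finite open refinement $\{W_\alpha\}_{\alpha \in \Lambda}$ of the cover $\{U_x\}_{x \in X}$ together with a choice function $\alpha \mapsto x_\alpha$ satisfying $W_\alpha \subset U_{x_\alpha}$. By standard shrinking in the normal space $X$, I can moreover arrange that $\cl{W_\alpha} \subset U_{x_\alpha}$.

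For each $\alpha \in \Lambda$ and each $n \in \omega$, define $C_{\alpha, n} = \cl{W_\alpha} \cap A_{x_\alpha, n}$. Then $C_{\alpha, n}$ is a closed subset of $X$ (since $\cl{W_\alpha}$ is closed in $X$ and $A_{x_\alpha, n}$ is closed in the open set $U_{x_\alpha} \supset \cl{W_\alpha}$), and as a closed subset of the strong $Z$-set $A_{x_\alpha, n}$ it is itself a strong $Z$-set in $U_{x_\alpha}$. Using the local-to-global criterion for strong $Z$-sets coming from Theorem~2.6.5 of \cite{Sa10}, I would upgrade this to the statement that $C_{\alpha, n}$ is a strong $Z$-set in the whole space $X$: for any point $y \in U_{x_\alpha}$ one can test locality on the neighborhood $U_{x_\alpha}$ itself, while for $y \notin U_{x_\alpha}$, since $\cl{W_\alpha} \subset U_{x_\alpha}$ is closed in $X$, the point $y$ admits a neighborhood disjoint from $C_{\alpha, n}$, on which the test is trivial.

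Finally, set $B_n = \bigcup_{\alpha \in \Lambda} C_{\alpha, n}$. Because $\{W_\alpha\}_{\alpha \in \Lambda}$ is locally finite in the metric space $X$, so is $\{\cl{W_\alpha}\}_{\alpha \in \Lambda}$, and hence so is the subfamily $\{C_{\alpha, n}\}_{\alpha \in \Lambda}$. Proposition~\ref{Z-op.union}(2) then yields that each $B_n$ is a strong $Z$-set in $X$. Since every point of $X$ lies in some $W_\alpha$, hence in some $A_{x_\alpha, n}$ and therefore in $C_{\alpha, n} \subset B_n$, we obtain $X = \bigcup_{n \in \omega} B_n$, exhibiting $X$ as a strong $Z_\sigma$-set in itself.

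The main obstacle is the passage from ``$C_{\alpha, n}$ is a strong $Z$-set in the open subset $U_{x_\alpha}$'' to ``$C_{\alpha, n}$ is a strong $Z$-set in the ambient ANR $X$''; this is precisely what makes the cited Theorem~2.6.5 essential, since a priori strong $Z$-ness is not preserved under enlarging the ambient space. Once this local-to-global bridge is in place, the rest of the argument is a routine paracompactness-and-shrinking gluing.
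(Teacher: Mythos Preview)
Your argument is correct. The paper gives no detailed proof here---it merely says ``Using Theorem~2.6.5 of \cite{Sa10}, we can show the following''---so there is little to compare at the level of execution. The intended use of Theorem~2.6.5, judging from how it is invoked elsewhere (see the proof of Proposition~\ref{LFAP-g-hered.}), is to apply it \emph{directly} to the property ``$U$ is a strong $Z_\sigma$-set in itself'': one checks that this property (a) passes to open subsets (Proposition~\ref{Z-op.union}(1)), (b) is preserved under the union of two open sets, and (c) is preserved under discrete unions; then the theorem immediately yields the global statement.

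Your route is organized differently: instead of applying the local-to-global machinery to the $Z_\sigma$ property itself, you work one strong $Z$-set at a time, lifting each $C_{\alpha,n}$ from $U_{x_\alpha}$ to $X$, and then assemble via paracompact refinement and Proposition~\ref{Z-op.union}(2). Both strategies do essentially the same work---indeed, verifying condition~(b) in the direct approach requires a shrinking argument very much like yours. One small caveat: Theorem~2.6.5 as quoted in the paper is a statement about properties of open sets, not directly a local criterion for a fixed closed set to be a strong $Z$-set. The fact you need (``if $C$ is closed in $X$ and every point has a neighborhood $V$ with $C\cap V$ a strong $Z$-set in $V$, then $C$ is a strong $Z$-set in $X$'') does follow, either by the two-case argument you sketch or by applying Theorem~2.6.5 to the property ``$C\cap U$ is a strong $Z$-set in $U$''; so the citation is apt, just slightly indirect.
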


A map $f : X \to Y$ is \textit{closed over} $A \subset Y$ provided that for each point $a \in A$ and each open neighborhood $U$ of $f^{-1}(a)$ in $X$, there is an open neighborhood $V$ of $a$ in $Y$ such that $f^{-1}(V) \subset U$.
A (strong) $Z$-set is characterized by the homotopy density, see Corollary~1.2 of \cite{BeMo}.

\begin{lem}\label{Z-homot.dense}
Let $X$ be an ANR and $A$ be a closed subset of $X$.
Then the following are equivalent.
\begin{enumerate}
 \item $A$ is a (strong) $Z$-set in $X$.
 \item There is a homotopy $h : X \times \I \to X$ such that $h(X \times (0,1]) \subset X \setminus A$, $h(x,0) = x$ for each $x \in X$ (and $h$ is closed over $A$).
\end{enumerate}
\end{lem}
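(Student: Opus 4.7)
The plan is to prove the two directions separately, handling the parenthetical ``strong'' case in parallel with the base case.

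For $(2) \Rightarrow (1)$, I would fix an open cover $\mathcal{U}$ of $X$ and produce a map $f : X \to X \setminus A$ that is $\mathcal{U}$-close to $\id_X$. Using continuity of $h$ at points of $X \times \{0\}$ together with $h(x,0) = x$, every $x \in X$ admits an open neighborhood $V_x \ni x$ and some $t_x \in (0,1]$ such that $h(V_x \times [0, t_x])$ is contained in a single member of $\mathcal{U}$ containing $x$. Taking a locally finite partition of unity subordinate to a refinement of $\{V_x\}_{x \in X}$, I would splice the $t_x$ into a continuous function $\tau : X \to (0,1]$ and set $f(x) = h(x, \tau(x))$. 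Since $\tau > 0$ everywhere, the image lies in $X \setminus A$, and by construction $f$ is $\mathcal{U}$-close to $\id_X$. For the strong case, observing that $h^{-1}(a) = \{(a,0)\}$ for each $a \in A$, any hypothetical sequence $x_n$ with $f(x_n) \to a \in A$ would force $(x_n, \tau(x_n)) \to (a, 0)$ by closedness of $h$ over $A$, giving $\tau(a) = 0$ by continuity, a contradiction; hence $\cl f(X) \cap A = \emptyset$.

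For $(1) \Rightarrow (2)$, I would build $h$ as a concatenation of homotopies $H_n$ on the strips $X \times [2^{-(n+1)}, 2^{-n}]$. Fix an admissible metric $d$ on $X$, choose a sequence of open covers $\mathcal{U}_n$ with $\mathcal{U}_{n+1}$ a star-refinement of $\mathcal{U}_n$ and $\mesh \mathcal{U}_n \to 0$, and pick maps $f_n : X \to X \setminus A$ that are $\mathcal{U}_n$-close to $\id_X$ by the $Z$-set hypothesis. Since $X \setminus A$ is an ANR as an open subset of the ANR $X$, one can homotope $f_n$ to $f_{n+1}$ inside $X \setminus A$ by a $\mathcal{U}_n$-small homotopy $H_n$, using the standard fact that sufficiently close maps into an ANR are joined by a small homotopy. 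Concatenating the $H_n$ and extending by $h(x, 0) = x$ yields the desired homotopy; the mesh condition ensures continuity at $X \times \{0\}$. For the strong version, the $f_n$ additionally satisfy $\cl f_n(X) \cap A = \emptyset$, and by refining the covers $\mathcal{U}_n$ so that members meeting a prescribed shrinking neighborhood of each $a \in A$ become correspondingly small, one forces the resulting $h$ to be closed over $A$.

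The main obstacle will be the closedness-over-$A$ bookkeeping in the strong direction $(1) \Rightarrow (2)$: ensuring that for every $a \in A$ and every open $U \subset X \times \I$ containing $\{(a,0)\} = h^{-1}(a)$, one has $h^{-1}(V) \subset U$ for some neighborhood $V \ni a$. This demands that the covers $\mathcal{U}_n$ be chosen with local metric control around $A$ so that each strip's homotopy $H_n$ cannot carry points outside $U$ into $V$. Once this control is in place, the two directions fit together cleanly and yield the claimed equivalence.
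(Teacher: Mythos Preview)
The paper does not prove this lemma; it merely cites Corollary~1.2 of \cite{BeMo}. So there is nothing in the paper to compare against, and I comment only on your outline.

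Your $(2)\Rightarrow(1)$ is correct in both the ordinary and the strong case. For $(1)\Rightarrow(2)$ the concatenation scheme is the right one, but one point must be tightened: you cannot fix all the covers $\mathcal U_n$ in advance and then pick the $f_n$. The ANR property of $X\setminus A$ says that, given a target cover $\mathcal U_n|_{X\setminus A}$, there is a finer cover $\mathcal W_n$ of $X\setminus A$ such that $\mathcal W_n$-close maps into $X\setminus A$ are $\mathcal U_n$-homotopic there; to obtain the small homotopy $H_n$ you need $f_n$ and $f_{n+1}$ to be $\mathcal W_n$-close, not merely $\st\mathcal U_n$-close. Since $\mathcal W_n$ depends on $\mathcal U_n$, the cover $\mathcal U_{n+1}$ must be chosen after $f_n$ and $\mathcal W_n$ are in hand, so the construction has to be genuinely inductive rather than set up all at once.

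Once that dependence is respected, your idea for the strong case works. If each $\mathcal U_n$ (restricted to $X\setminus A$) is also taken to refine $\{B(y,d(y,A)/3):y\in X\setminus A\}$, then any member meeting a small ball about a point $a\in A$ has small diameter. So if $H_n(x_k,s_k)\to a\in A$ and $U_k\in\mathcal U_n$ contains the track $H_n(\{x_k\}\times\I)$, then $\diam U_k\to 0$ and hence $f_n(x_k)=H_n(x_k,0)\to a$, contradicting $\cl f_n(X)\cap A=\emptyset$. Thus $\cl H_n(X\times\I)\cap A=\emptyset$ for every $n$; combined with the shrinking track sizes this gives closedness of $h$ over $A$, exactly as you anticipated.
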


On (strong) $Z$-sets in product spaces, we prove the following proposition.

\begin{prop}\label{Z-prod.}
Let $X$ and $Y$ be ANRs.
For every (strong) $Z$-set $A$ in $X$, $A \times Y$ is a (strong) $Z$-set in $X \times Y$.
\end{prop}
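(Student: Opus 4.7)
The plan is to apply Lemma~\ref{Z-homot.dense} in both directions, using the standard fact that the product of two (metric) ANRs is an ANR; thus $X \times Y$ is an ANR and $A \times Y$ is closed in it. By the (only if) direction applied to the (strong) $Z$-set $A \subset X$, one selects a homotopy $h : X \times \I \to X$ with $h(x,0) = x$, $h(X \times (0,1]) \subset X \setminus A$, and, in the strong case, $h$ closed over $A$. The natural candidate homotopy for $A \times Y \subset X \times Y$ is
\[
 H : (X \times Y) \times \I \to X \times Y, \qquad H((x,y),t) = (h(x,t), y).
\]

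First I would verify the two homotopy-density conditions: $H((x,y), 0) = (x,y)$ is immediate, and $H((X \times Y) \times (0,1]) \subset (X \setminus A) \times Y = (X \times Y) \setminus (A \times Y)$ follows directly from the corresponding property of $h$. By the (if) direction of Lemma~\ref{Z-homot.dense}, this already settles the $Z$-set case.

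The substantive part is the strong case, where $H$ must be shown to be closed over $A \times Y$. The key observation that makes the argument clean is that $h^{-1}(a) = \{(a, 0)\}$ for every $a \in A$: if $h(x,t) = a$, then $t > 0$ is excluded because $h(X \times (0,1]) \cap A = \emptyset$, so $t = 0$ and hence $x = h(x,0) = a$. Consequently $H^{-1}(a, y_0) = \{(a, y_0, 0)\}$ is a single point. Given an open $\mathcal{O} \subset X \times Y \times \I$ containing this point, I would pick a basic rectangle $V_X \times V_Y \times [0, \epsilon) \subset \mathcal{O}$ around $(a, y_0, 0)$, apply closedness of $h$ over $a$ to the open neighborhood $V_X \times [0, \epsilon)$ of $h^{-1}(a) = \{(a,0)\}$ in $X \times \I$ to obtain an open $V'_X \ni a$ with $h^{-1}(V'_X) \subset V_X \times [0, \epsilon)$, and then $V'_X \times V_Y$ is the desired open neighborhood of $(a, y_0)$ in $X \times Y$ satisfying $H^{-1}(V'_X \times V_Y) \subset V_X \times V_Y \times [0, \epsilon) \subset \mathcal{O}$.

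The step I initially expect to be the main obstacle is this ``closed over'' requirement, which looks as if it might demand a tube-lemma-style compactness on the $Y$-factor when $h^{-1}(a)$ is large. The resolution is the observation above: the combination $h_0 = \id$ and $h(X \times (0,1]) \cap A = \emptyset$ forces $h^{-1}(a)$ to be a singleton, so no compactness argument is needed and the verification reduces to elementary manipulation of basic product open sets.
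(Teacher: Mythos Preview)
Your proposal is correct and follows essentially the same argument as the paper: both apply Lemma~\ref{Z-homot.dense} to obtain $h$, define the product homotopy $H((x,y),t)=(h(x,t),y)$, observe that $h^{-1}(a)=\{(a,0)\}$ so that $H^{-1}(a,y_0)$ is a singleton, and then use the closedness of $h$ over $A$ on a basic neighborhood to produce the required neighborhood in $X\times Y$. The only cosmetic difference is that you split the basic neighborhood in $X\times\I$ as $V_X\times[0,\epsilon)$, whereas the paper keeps it as a single open set $U_1\subset X\times\I$; this changes nothing.
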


\begin{proof}
We will show the case that $A$ is a strong $Z$-set in $X$.
According to Lemma~\ref{Z-homot.dense}, there exists a homotopy $h : X \times \I \to X$ such that $h(X \times (0,1]) \subset X \setminus A$, $h(x,0) = x$ for any $x \in X$, and $h$ is closed over $A$.
We define the desired homotopy $\tilde{h} : X \times Y \times \I \to X \times Y$ by $\tilde{h}(x,y,t) = (h(x,t),y)$.
It is clear that
 $$\tilde{h}(X \times Y \times (0,1]) \subset X \times Y \setminus A \times Y$$
 and $\tilde{h}(x,y,0) = (x,y)$ for all $(x,y) \in X \times Y$.
It remains to prove that $\tilde{h}$ is closed over $A \times Y$.
Take any $(a,y) \in A \times Y$ and any open neighborhood $U$ of $\tilde{h}^{-1}(a,y)$ in $X \times Y \times \I$.
Remark that $\tilde{h}^{-1}(a,y) = \{(a,y,0)\}$ and $h^{-1}(a) = \{(a,0)\}$.
Hence we can choose an open neighborhood $U_1$ of $(a,0)$ in $X \times \I$ and an open neighborhood $U_2$ of $y$ in $Y$ such that $U_1 \times U_2 \subset U$.
Since $h$ is closed over $A$,
 there is an open neighborhood $V$ of $a$ in $X$ such that $h^{-1}(V) \subset U_1$.
Then $V \times U_2$ is an open neighborhood of $(a,y)$ in $X \times Y$ and
 $$\tilde{h}^{-1}(V \times U_2) \subset h^{-1}(V) \times U_2 \subset U_1 \times U_2 \subset U.$$
The proof is complete.
\end{proof}

Proposition~2.1 of \cite{BeMo} is valid without separability.

\begin{prop}\label{str.univ.op.}
Let $\mathfrak{C}$ be a topological and closed hereditary class.
Suppose that an ANR $X$ is strongly $\mathfrak{C}$-universal.
Then so is every open set in $X$.
\end{prop}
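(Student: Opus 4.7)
The plan is to transfer the problem to the ambient ANR $X$ and invoke its strong $\mathfrak{C}$-universality. Given $f:A\to U$ with $A\in\mathfrak{C}$, $B\subset A$ closed with $f|_B$ a $Z$-embedding in $U$, and an open cover $\mathcal{V}$ of $U$, I want to produce a $Z$-embedding $g:A\to U$ that is $\mathcal{V}$-close to $f$ and agrees with $f$ on $B$. The observation I will exploit is that since $U$ is open in $X$, every $V\in\mathcal{V}$ is also open in $X$; regarding $\mathcal{V}$ as a family of open subsets of $X$ covering $U$, any map into $X$ that is $\mathcal{V}$-close to $f$ automatically has image in $U$.

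First, I will fix an admissible metric $d$ on $X$ and construct a continuous function $\alpha:U\to(0,1]$ bounded above by $d(\cdot,X\setminus U)/2$ so that each ball $B_d(u,\alpha(u))$ sits inside some member of $\mathcal{V}$; this uses paracompactness of $U$. These balls refine $\mathcal{V}$ and are open subsets of $X$; I extend this family by open sets covering $X\setminus U$, chosen small enough to miss $f(A)$, yielding an open cover $\mathcal{W}$ of $X$. Second, I will check that $f|_B:B\to X$ is a $Z$-embedding in $X$: Lemma~\ref{Z-homot.dense} supplies a homotopy $h:U\times\I\to U$ witnessing $f(B)$ as a $Z$-set in $U$, and the rescaled homotopy $\tilde{h}:X\times\I\to X$ defined by $\tilde{h}(x,t):=h(x,\phi(x)t)$ for $x\in U$ and $\tilde{h}(x,t):=x$ otherwise, where $\phi:X\to\I$ is a Urysohn function with $\phi^{-1}(0)=X\setminus U$, exhibits $f(B)$ as a $Z$-set in $X$. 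Third, I apply strong $\mathfrak{C}$-universality of $X$ (using that $\mathfrak{C}$ is topological and closed hereditary, so $A\in\mathfrak{C}$) to obtain a $Z$-embedding $g:A\to X$ that is $\mathcal{W}$-close to $f$ and agrees with $f$ on $B$. The design of $\mathcal{W}$ forces $g(A)\subset U$ and $g$ to be $\mathcal{V}$-close to $f$, and Proposition~\ref{Z-op.union}(1) promotes $g(A)$ to a $Z$-set in $U$.

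The hard part will be the step promoting $f|_B$ from a $Z$-embedding in $U$ to one in $X$: $f(B)$ is closed in $U$ but can fail to be closed in $X$, and continuity of $\tilde{h}$ at the topological boundary of $U$ inside $X$ is not automatic. To surmount this, I will exhaust $U$ by an increasing sequence of open sets $U_n\subset X$ with $\overline{U_n}^X\subset U_{n+1}\subset U$ and $\bigcup_n U_n=U$, localize the construction to each layer $f^{-1}(\overline{U_n}^X\setminus U_{n-1})$ where the relevant closures remain inside $U$, and glue the resulting local $Z$-embeddings via Proposition~\ref{Z-op.union}(2) on locally finite unions of $Z$-sets. This layered approach also simultaneously arranges the small neighborhoods in $\mathcal{W}$ that avoid $f(A)$. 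Once this bookkeeping is in place, the remaining steps reduce to routine applications of strong $\mathfrak{C}$-universality of $X$.
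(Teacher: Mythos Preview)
Your diagnosis and overall strategy are correct and coincide with what the paper intends: it gives no self-contained argument but simply cites Proposition~2.1 of \cite{BeMo}, whose proof is exactly the exhaustion $U=\bigcup_n U_n$, $\overline{U_n}^X\subset U_{n+1}\subset U$, that you arrive at. You are also right that the naive single application of strong universality of $X$ fails because $f(B)$ need not be closed in $X$, and that your cutoff homotopy $\tilde h$ need not extend continuously across $\partial U$.

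The one genuine gap is the phrase ``glue the resulting local $Z$-embeddings.'' If you construct $Z$-embeddings \emph{independently} on the annular layers $f^{-1}(\overline{U_n}^X\setminus U_{n-1})$ there is no reason they agree on overlaps or that their union is injective; Proposition~\ref{Z-op.union}(2) is a statement about sets, not about assembling maps. The construction has to be \emph{inductive} on the nested closed sets $A_n=f^{-1}(\overline{U_n}^X)\in\mathfrak{C}$: given a $Z$-embedding $g_{n-1}\colon A_{n-1}\to X$ with image in $U$, apply the strong $\mathfrak{C}$-universality of $X$ to $A_n$ with the closed subset $A_{n-1}\cup(B\cap A_n)$ held fixed. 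The restriction there is a $Z$-embedding into $X$ because $g_{n-1}(A_{n-1})$ and $f(B)\cap\overline{U_n}^X$ are each closed in $X$ and are $Z$-sets in $X$ (for the latter your cutoff idea now works, since the Urysohn function can be taken with support in $\overline{U_{n+1}}^X\subset U$, so continuity at $\partial U$ is no longer an issue); you must also carry along the inductive hypothesis $g_{n-1}(A_{n-1}\setminus B)\cap f(B)=\emptyset$ so that this combined restriction is actually injective. Choosing the approximations fine enough to force $g_n(A_n)\subset\overline{U_{n+1}}^X$ makes the limit $g\colon A\to U$ a closed embedding, and \emph{then} Proposition~\ref{Z-op.union} shows $g(A)$ is a $Z$-set in $U$ as you say.
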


Applying Theorem~2.6.5 of \cite{Sa10}, we can prove the following proposition, refer to \cite[Proposition~2.7]{BeMo} and \cite{SaY}.

\begin{prop}\label{str.univ.g-hered.}
Let $\mathfrak{C}$ be a topological and closed hereditary class of spaces and $X$ be an ANR.
If every point $x \in X$ has an open neighborhood that is strongly $\mathfrak{C}$-universal,
 then $X$ is also strongly $\mathfrak{C}$-universal.
\end{prop}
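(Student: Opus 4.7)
The plan is to derive this local-to-global statement by invoking Theorem~2.6.5 of \cite{Sa10}, which packages the standard patching machinery for ANR-properties that are stable under locally finite unions of open perturbations. The scheme parallels the separable case \cite[Proposition~2.7]{BeMo} and is the same one used for Proposition~\ref{str.Z_sigma-g-hered.} above; the only new ingredient is that the index set of the open cover is allowed to have cardinality as large as $\kappa$.

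Concretely, I would proceed as follows. Start with a map $f:A\to X$ from a space $A\in\mathfrak{C}$, a closed set $B\subset A$ with $f|_B$ a $Z$-embedding, and an open cover $\mathcal{U}$ of $X$. Using paracompactness of the ANR $X$, choose a locally finite open cover $\{W_\gamma\}_{\gamma<\lambda}$ refining $\mathcal{U}$, together with open sets $V_\gamma\supset\overline{W_\gamma}$ each of which is strongly $\mathfrak{C}$-universal (by the hypothesis of the proposition and Proposition~\ref{str.univ.op.}). Well-order the indices and inductively construct maps $f_\gamma:A\to X$ that agree with $f$ on $B$, are $\mathcal{U}$-close to $f$, coincide with $f_{\gamma'}$ off $f^{-1}(V_\gamma)$ for $\gamma'<\gamma$, and are $Z$-embeddings on every closed piece $C_\delta=f^{-1}(\overline{W_\delta})$ with $\delta\leq\gamma$. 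At the inductive step apply strong $\mathfrak{C}$-universality of $V_\gamma$ to $f_{\gamma-1}|_{C_\gamma}$, keeping fixed the portion where $f_{\gamma-1}|_{C_\gamma}$ is already a $Z$-embedding (namely its restriction to $(B\cup\bigcup_{\delta<\gamma}C_\delta)\cap C_\gamma$); note that $C_\gamma\in\mathfrak{C}$ because $\mathfrak{C}$ is closed hereditary. Set $g=\lim_\gamma f_\gamma$; local finiteness of $\{W_\gamma\}$ ensures that only finitely many changes occur near each point of $A$, so the limit is continuous and is an embedding, and Proposition~\ref{Z-op.union}(2) then guarantees that $g(A)=\bigcup_\gamma g(C_\gamma)$ is a $Z$-set in $X$ as a locally finite union of $Z$-sets.

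The main obstacle is the inductive step: one must perturb $f_{\gamma-1}$ over $V_\gamma$ without destroying the $Z$-embedding property already established on $\bigcup_{\delta<\gamma} C_\delta$. This forces us to feed the already-adjusted portion into the strong universality of $V_\gamma$ as the rel-$B$ piece, and to control the sizes of the perturbations so that the transfinite sequence $\{f_\gamma\}$ converges to a continuous map in the non-separable setting. Theorem~2.6.5 of \cite{Sa10} is precisely designed to discharge this bookkeeping uniformly in $\lambda$, so in practice the proof reduces to verifying its two hypotheses: openness of strong $\mathfrak{C}$-universality (Proposition~\ref{str.univ.op.}) and closed hereditariness of $\mathfrak{C}$, both of which are at hand.
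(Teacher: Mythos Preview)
Your approach is correct and matches the paper's: the paper gives no detailed proof and simply refers to Theorem~2.6.5 of \cite{Sa10} together with \cite[Proposition~2.7]{BeMo} and \cite{SaY}, exactly as you do. One small slip: the hypotheses of Theorem~2.6.5 (as you can see from its use in Proposition~\ref{LFAP-g-hered.}) are the three conditions (a) open-hereditary, (b) stable under the union of two opens, (c) stable under discrete unions of opens---your patching sketch is essentially the verification of (b), not a separate argument, and closed-hereditariness of $\mathfrak{C}$ is an input to that verification rather than a hypothesis of the theorem itself; also the cardinal $\kappa$ plays no role in this proposition.
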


Proposition~2.6 of \cite{BeMo} holds without separability.

\begin{prop}\label{str.univ.prod.}
Let $\mathfrak{C}$ be a topological and $\I$-stable class.
Suppose that $X$ and $Y$ are ANRs such that every $Z$-set in $X \times Y$ is a strong $Z$-set.
If $X$ is strongly $\mathfrak{C}$-universal,
 then so is the product space $X \times Y$.
\end{prop}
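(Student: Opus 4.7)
The plan is to proceed in two stages: first produce a $Z$-embedding approximating $f$ without any boundary condition on $B$, and then correct it near $B$ so that it agrees with $f|_B$. The strong $Z$-set hypothesis powers the correction step, while $\I$-stability is used when the correction is performed via an auxiliary map on $A\times\I$.

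Stage 1 (absolute approximation). Given $f\colon A\to X\times Y$ with $A\in\mathfrak{C}$, closed $B\subset A$ with $f|_B$ a $Z$-embedding, and an open cover $\mathcal{U}$ of $X\times Y$, refine $\mathcal{U}$ to a cover by product open sets $V_X\times V_Y$ and let $\mathcal{U}_X$ be the induced cover of $X$. Since $X$ is strongly $\mathfrak{C}$-universal and $A\in\mathfrak{C}$, applying the universality with empty closed subset to the coordinate map $f_X\colon A\to X$ yields a $Z$-embedding $h_X\colon A\to X$ that is $\mathcal{U}_X$-close to $f_X$. Set $h=(h_X,f_Y)\colon A\to X\times Y$. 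Then $h$ is an injective continuous map, and its image coincides with the graph of $f_Y\circ h_X^{-1}\colon h_X(A)\to Y$, hence is closed in $h_X(A)\times Y$ and therefore in $X\times Y$. By Proposition~\ref{Z-prod.}, $h_X(A)\times Y$ is a $Z$-set in $X\times Y$, and a closed subset of a $Z$-set in an ANR is again a $Z$-set, so $h$ is already a $Z$-embedding $\mathcal{U}$-close to $f$.

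Stage 2 (matching on $B$). To promote $h$ to a map agreeing with $f$ on $B$, apply Lemma~\ref{Z-homot.dense} to the $Z$-set $f(B)$: by the hypothesis that every $Z$-set of $X\times Y$ is a strong $Z$-set, one obtains a homotopy $\Phi\colon(X\times Y)\times\I\to X\times Y$ with $\Phi_0=\id$, $\Phi_t(X\times Y)\cap f(B)=\emptyset$ for $t>0$, and $\Phi$ closed over $f(B)$. Using $\I$-stability, so that $A\times\I\in\mathfrak{C}$, together with Borsuk's homotopy extension theorem in the ANR $X\times Y$, one interpolates between $h$ and $f$ by a small homotopy whose support is contained in a neighborhood of $B$ in $A$, producing $g\colon A\to X\times Y$ with $g|_B=f|_B$ and $g|_{A\setminus B}$ close to $h|_{A\setminus B}$. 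Closedness of $\Phi$ over $f(B)$ is what allows the support of this homotopy to be chosen inside an arbitrarily small neighborhood of $B$, so that $g$ is $\mathcal{U}$-close to $f$. Finally, $g(A)=f(B)\cup g(A\setminus B)$ is, near $B$, a locally finite union of the $Z$-set $f(B)$ and a subset of the $Z$-set $h(A)$, which by Proposition~\ref{Z-op.union}(2) is again a $Z$-set; hence $g$ is the required $Z$-embedding.

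Main obstacle. The delicate balance lies in Stage 2: the correction must be large enough to transport $h(b)$ onto $f(b)$ for every $b\in B$ yet small enough to preserve both the $\mathcal{U}$-closeness to $f$ and the $Z$-embedding character of the image. The closedness over $f(B)$ of the pushing homotopy $\Phi$, which is precisely the upgrade supplied by the strong $Z$-set hypothesis, is what makes this balance achievable, since it permits the correction to be localized to arbitrarily small neighborhoods of $B$. The $\I$-stability of $\mathfrak{C}$ is needed to keep the auxiliary domain $A\times\I$ used in the extension step inside the class, so that strong universality of $X$ and the homotopy-extension machinery can be combined without leaving $\mathfrak{C}$.
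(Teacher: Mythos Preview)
Your Stage~1 is correct and is essentially how the argument begins: with $h_X\colon A\to X$ a $Z$-embedding approximating $f_X$, the map $h=(h_X,f_Y)$ is a $Z$-embedding of $A$ into $X\times Y$ by exactly the graph-and-Proposition~\ref{Z-prod.} reasoning you give.

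Stage~2, however, is not a proof. Three concrete problems:
\begin{itemize}
\item You never actually define $g$. ``One interpolates between $h$ and $f$'' is a description of a goal, not a construction; Borsuk's homotopy extension theorem will produce a map $g$ with $g|_B=f|_B$ and $g$ close to $h$, but that map has no reason to be injective. An interpolation between two embeddings is typically not an embedding.
\item You never verify injectivity of $g$, and you cannot: nothing in your outline separates $g(N\setminus B)$ from $g(A\setminus N)$ or from $f(B)$, nor rules out self-intersections inside the transition collar.
\item The assertion that ``$g(A\setminus B)$ is a subset of the $Z$-set $h(A)$'' is simply false. Being $\mathcal{U}$-close to $h$ does not force the image into $h(A)$; once you modify $h$ near $B$, the new image leaves $h(A)$, and you have no $Z$-set containing it.
\end{itemize}
Your ``Main obstacle'' paragraph correctly names the difficulty but does not overcome it; the homotopy $\Phi$ you extract from the strong $Z$-set property pushes points \emph{away} from $f(B)$, which is the opposite direction from what ``matching $h$ to $f$ on $B$'' requires, so its role in your sketch is unclear.

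The paper itself does not reproduce a proof here; it cites Proposition~2.6 of Bestvina--Mogilski and observes that separability is not needed. In that argument $\I$-stability is used in a much more specific way than you indicate: one applies the \emph{relative} strong $\mathfrak{C}$-universality of $X$ to a map $A\times\I\to X$, using as boundary datum a slice on which the map is already a $Z$-embedding (e.g.\ $A\times\{0\}$ carrying $h_X$), so that the resulting $Z$-embedding $G\colon A\times\I\to X$ extends $h_X$. A Urysohn function $\lambda\colon A\to\I$ and the strong $Z$-set homotopy $\Phi$ are then combined with $G$ to build $g$; injectivity comes from the injectivity of $G$ on the two-dimensional domain $A\times\I$ together with the separation provided by $\Phi$. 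Your outline invokes both ingredients but does not assemble them into an embedding.
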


According to Proposition~3.5 of \cite{SaY}, we have the following\footnote{A class $\mathfrak{C}$ is \textit{additive} if every space written as a union of two closed subsets belonging to $\mathfrak{C}$ is in $\mathfrak{C}$.
Proposition~3.5 of \cite{SaY} is valid without additivity.
Moreover, this proposition can be shown by using the $\kappa$-discrete $n$-cells property instead of the assumption that $\I^n \times \kappa \in \mathfrak{C}$.}:

\begin{prop}\label{str.univ.sigma}
Let $\mathfrak{C}$ be a topological and closed hereditary class of spaces and $X$ be an ANR of density $\leq \kappa$.
Suppose that $X$ has the $\kappa$-discrete $n$-cells property for each $n \in \omega$ and is a strong $Z_\sigma$-set in itself.
If $X$ is strongly $\mathfrak{C}$-universal,
 then it is strongly $\mathfrak{C}_\sigma$-universal.
\end{prop}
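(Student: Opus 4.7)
Given $f : A \to X$ with $A \in \mathfrak{C}_\sigma$, a closed $B \subset A$ with $f|_B$ a $Z$-embedding, and an open cover $\mathcal{U}$ of $X$, the goal is to produce a $Z$-embedding $g : A \to X$ that is $\mathcal{U}$-close to $f$ and agrees with $f$ on $B$. First I write $A = \bigcup_{n \in \omega} A_n$ with each $A_n$ closed in $A$ and $A_n \in \mathfrak{C}$, and by replacing each $A_n$ with its closed slices $A_n \cap \{x \in A : \dist(x, B) \geq 1/k\}$, $k \geq 1$, and re-enumerating (using closed hereditariness of $\mathfrak{C}$), I may assume $A_n \cap B = \emptyset$ for every $n$, so that $A = B \cup \bigcup_n A_n$.

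Starting from $f_0 := f$, I inductively build $f_n : A \to X$ so that $f_n = f_{n-1}$ on $B \cup \bigcup_{i < n} A_i$, the restriction $f_n|_{B \cup \bigcup_{i \leq n} A_i}$ is a $Z$-embedding, and $f_n$ is close to $f_{n-1}$ in a carefully chosen refinement of the previous cover. At stage $n$, let $B_n := A_n \cap \bigcup_{i < n} A_i$, closed in $A_n$. Since $f_{n-1}|_{B \cup \bigcup_{i < n} A_i}$ is a $Z$-embedding, the image $f_{n-1}(B_n)$ is a closed subset of a $Z$-set and hence itself a $Z$-set, so $f_{n-1}|_{B_n}$ is a $Z$-embedding. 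Applying strong $\mathfrak{C}$-universality of $X$ to $f_{n-1}|_{A_n} : A_n \to X$ relative to $B_n$ (note $A_n \in \mathfrak{C}$), I obtain a $Z$-embedding $h_n : A_n \to X$ close to $f_{n-1}|_{A_n}$ and equal to $f_{n-1}$ on $B_n$. Using the ANR property of $X$ to perform a standard partition-of-unity extension, I glue $h_n$ with $f_{n-1}$ into a continuous $f_n : A \to X$ equal to $h_n$ on $A_n$ and to $f_{n-1}$ on $B \cup \bigcup_{i < n} A_i$, at the cost of a small additional error.

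With the standard refinement scheme for open covers, the sequence $(f_n)$ converges to a map $g : A \to X$ that is $\mathcal{U}$-close to $f$, agrees with $f$ on $B$, and satisfies $g|_{B \cup \bigcup_{i \leq n} A_i} = f_n|_{B \cup \bigcup_{i \leq n} A_i}$, a $Z$-embedding, for every $n$; it follows that $g$ is itself an embedding. Each $g(A_n)$ is therefore a $Z$-set in $X$, and by Proposition \ref{Z-DCP}(2) combined with the hypothesis that $X$ is a strong $Z_\sigma$-set in itself, both $f(B)$ and every $g(A_n)$ are strong $Z$-sets. Thus $g(A) = f(B) \cup \bigcup_n g(A_n)$ is a strong $Z_\sigma$-set in $X$. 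To conclude that $g(A)$ is itself a $Z$-set---and hence that $g$ is the desired $Z$-embedding---I combine the pushing-off homotopies supplied by Lemma \ref{Z-homot.dense} for the strong $Z$-sets $f(B)$ and $g(A_n)$, via a standard sandwich composition with uniformly decreasing errors, into a single homotopy $H : X \times \I \to X$ with $H(\cdot, 0) = \id_X$ and $H(X \times (0, 1]) \cap g(A) = \emptyset$, and appeal once more to Lemma \ref{Z-homot.dense}. The main obstacle will be the inductive ANR gluing: ensuring continuity of $f_n$ across the boundary $\partial_A A_n$, where $h_n$ and $f_{n-1}$ may only agree on $B_n$; preserving the $Z$-embedding property on the ever-growing domain; and choosing open-cover refinements so that $(f_n)$ limits to a genuine embedding. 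A secondary delicate point is the construction of $H$: the family $\{g(A_n)\}$ need not be locally finite in $X$, so one must lean on the \emph{strong} $Z$-set property of each piece to make the sandwich composition produce a homotopy whose image for $t > 0$ really stays off the entire union $g(A)$.
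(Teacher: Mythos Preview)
The paper does not give its own proof of this proposition; it simply cites Proposition~3.5 of \cite{SaY} (with a footnote explaining that additivity and the hypothesis $\I^n \times \kappa \in \mathfrak{C}$ can be dropped). Your outline follows the standard Bestvina--Mogilski/Sakai--Yaguchi inductive scheme, but two of the steps you label ``routine'' are genuine gaps.

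\textbf{Injectivity at each stage.} After obtaining the $Z$-embedding $h_n : A_n \to X$ from strong $\mathfrak{C}$-universality, you assert that $f_n|_{B \cup A_1 \cup \cdots \cup A_n}$ is a $Z$-embedding. But strong universality gives no control over $h_n(A_n \setminus B_n) \cap f_{n-1}(B \cup A_1 \cup \cdots \cup A_{n-1})$, so the glued map need not be injective. In the standard argument one first uses that the already-embedded image is a \emph{strong} $Z$-set (this is where Proposition~\ref{Z-DCP}(2) is actually needed) to push $f_{n-1}|_{A_n}$, rel a neighborhood of $B_n$, into the open complement, and then applies strong universality \emph{there} via Proposition~\ref{str.univ.op.}. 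This is what makes the new piece disjoint from the old one.

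\textbf{Closedness of $g(A)$.} Your final step does not work. A $Z$-set is by definition closed, and a strong $Z_\sigma$-set need not be closed (a countable dense subset of $\ell_2$ is a union of strong $Z$-sets). No ``sandwich composition'' of pushing-off homotopies will fix this: at best such a homotopy shows $X \setminus g(A)$ is homotopy dense, which says nothing unless $g(A)$ is already closed. In the Bestvina--Mogilski/Sakai--Yaguchi proof, closedness of the image is built into the induction: one fixes a decomposition $X = \bigcup_m X_m$ into strong $Z$-sets (this is the real use of the hypothesis that $X$ is a strong $Z_\sigma$-set in itself) and at stage $n$ additionally pushes the image of the not-yet-embedded part off $X_n$. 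With the refinement scheme tuned to this, the limit $g$ becomes a \emph{closed} embedding, and then the closed $Z_\sigma$-set $g(A)$ is automatically a $Z$-set. Your sketch uses the strong $Z_\sigma$ hypothesis only through Proposition~\ref{Z-DCP}(2), which is not enough.
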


\section{The discrete approximation property}\label{D(LF)AP}

This section is devoted to investigating the discrete (locally finite) approximation property.
For a homotopy $h : X \times \I \to Y$ and an open cover $\mathcal{U}$ of $Y$, $h$ is a \textit{$\mathcal{U}$-homotopy} provided that for each $x \in X$, the homotopy track $h(\{x\} \times \I)$ is contained in some member of $\mathcal{U}$.
When $Y = (Y,d)$ is a metric space,
 for a positive number $\epsilon > 0$, $h$ is an \textit{$\epsilon$-homotopy} if the diameter of $h(\{x\} \times \I)$ is less than $\epsilon$.
The locally finite approximation property is open and homotopy dense hereditary.

\begin{prop}\label{LFAP-op.}
Let $\mathfrak{C}$ be a class.
Suppose that $X$ is an ANR and $U$ is an open set in $X$.
If $X$ has the $\kappa$-locally finite approximation property for $\mathfrak{C}$,
 then so does $U$.
\end{prop}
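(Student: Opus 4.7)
The plan is to reduce the problem to the $\kappa$-locally finite approximation property of $X$ by lifting the approximation problem via the inclusion $\iota : U \hookrightarrow X$ and applying the hypothesis with a carefully chosen open cover of $X$. Given a map $f : \bigoplus_{\gamma < \kappa} A_\gamma \to U$ from a topological sum of members of $\mathfrak{C}$ and an open cover $\mathcal{U}$ of $U$, the goal is to produce $g : \bigoplus_{\gamma < \kappa} A_\gamma \to U$ that is $\mathcal{U}$-close to $f$ with $\{g(A_\gamma)\}_{\gamma < \kappa}$ locally finite in $U$.

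First, fix a metric $d$ on $X$. Using paracompactness and a standard gauge-function argument, I would construct a continuous $\rho : U \to (0,1]$ such that for each $u \in U$, the ball $B_d(u,2\rho(u))$ is contained in some member of $\mathcal{U}$, and $\rho(u) < d(u, X \setminus U)/3$ whenever $X \setminus U \neq \emptyset$. The second condition forces $B_d(u, 2\rho(u)) \subset U$ and allows $\rho$ to extend to a continuous function $\tilde\rho : X \to [0,1]$ by setting $\tilde\rho|_{X \setminus U} \equiv 0$.

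Using $\tilde\rho$ and paracompactness of $X$, I would construct an open cover $\tilde{\mathcal{U}}$ of $X$ with the star property that $\St(u, \tilde{\mathcal{U}}) \subset B_d(u, \rho(u))$ for every $u \in U$. Applying the $\kappa$-locally finite approximation property of $X$ to the composition $\iota \circ f$ with cover $\tilde{\mathcal{U}}$ would yield $g : \bigoplus_{\gamma < \kappa} A_\gamma \to X$ that is $\tilde{\mathcal{U}}$-close to $f$ with $\{g(A_\gamma)\}$ locally finite in $X$. For each $a$, any member of $\tilde{\mathcal{U}}$ containing both $f(a)$ and $g(a)$ is contained in $\St(f(a), \tilde{\mathcal{U}}) \subset B_d(f(a), \rho(f(a)))$, which lies in $U$ and in some member of $\mathcal{U}$; hence $g$ maps into $U$ and is $\mathcal{U}$-close to $f$. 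Since $U$ is open in $X$, any family of subsets of $U$ that is locally finite in $X$ is also locally finite in $U$, so $\{g(A_\gamma)\}$ has the required property.

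The main obstacle is the construction of the cover $\tilde{\mathcal{U}}$ with the star property. The delicate case is near points $x \in \overline{U} \setminus U$, where $\tilde\rho$ vanishes but any open neighborhood of $x$ necessarily meets $U$; maintaining the star estimate at nearby interior points $u \in U$ requires tight control on the radii of cover members crossing the boundary of $U$. The standard resolution is to take a locally finite open refinement (via paracompactness of $X$) of a cover by $\tilde\rho$-scaled balls, arranging $\rho$ to satisfy a suitable Lipschitz-type estimate so that the star inclusions hold uniformly. This is the technical heart of the argument; once $\tilde{\mathcal{U}}$ is in place, the reduction to the $\kappa$-locally finite approximation property of $X$ proceeds routinely.
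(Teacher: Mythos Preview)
Your single-shot reduction to the $\kappa$-locally finite approximation property of $X$ cannot work as stated: the open cover $\tilde{\mathcal{U}}$ of $X$ with $\St(u,\tilde{\mathcal{U}}) \subset B_d(u,\rho(u))$ for all $u \in U$ does not exist whenever $\overline{U}\setminus U \neq \emptyset$. Indeed, pick $x \in \overline{U}\setminus U$ and any $V \in \tilde{\mathcal{U}}$ with $x \in V$. Since $V$ is open and $x \in \overline{U}$, there are points $u \in V \cap U$ arbitrarily close to $x$; for such $u$ the star condition forces $V \subset B_d(u,\rho(u))$, hence $d(u,x) < \rho(u) < d(u,X\setminus U)/3 \leq d(u,x)/3$, a contradiction. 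No Lipschitz-type control on $\rho$ helps here: the obstruction is that any open set covering a boundary point must contain interior points where $\rho$ is already smaller than the diameter of that set. In short, a single application of the $\kappa$-LFAP of $X$ cannot be made to keep the approximating map inside $U$ uniformly, because any open cover of $X$ is forced to be ``coarse'' somewhere on $U$ near $\partial U$.

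The paper avoids this by never asking one cover of $X$ to be fine on all of $U$ at once. It exhausts $U = \bigcup_i U_i$ with $\cl U_i \subset U_{i+1}$, and for each layer uses the cover $\mathcal{V}_i = \{U'' \cap U_{2i+1} \mid U'' \in \mathcal{U}''\} \cup \{X \setminus \cl U_{2i}\}$ of $X$, which is fine on $\cl U_{2i}$ and trivially coarse outside. Applying the $\kappa$-LFAP of $X$ with $\mathcal{V}_i$ controls the approximation on $B_{2i} = f^{-1}(\cl U_{2i}\setminus U_{2i-1})$, where the $\mathcal{V}_i$-closeness is necessarily realized by the fine members and hence lands in $U_{2i+1} \subset U$. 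The Homotopy Extension Theorem then glues these layer-wise approximations into a global map $A \to U$, and a second pass handles the odd layers. Your proposal is missing exactly this exhaustion-and-gluing mechanism; once you accept that countably many applications of the ambient $\kappa$-LFAP (plus the Homotopy Extension Theorem) are required, the argument becomes the paper's.
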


\begin{proof}
Suppose that $f : A = \bigoplus_{\gamma < \kappa} A_\gamma \to U$ is a map,
 where $A_\gamma \in \mathfrak{C}$ for all $\gamma < \kappa$,
 and $\mathcal{U}$ is an open cover of $U$.
We will construct a map $h : A \to U$ such that $h$ is $\mathcal{U}$-close to $f$ and the family $\{h(A_\gamma) \mid \gamma < \kappa\}$ is locally finite in $U$.

Take an open cover $\mathcal{U}'$ of $U$ such that it is a star-refinement of $\mathcal{U}$.
We can write $U = \bigcup_{i \in \omega} U_i$,
 where $U_i$ is an open set in $X$ and $\cl{U_i} \subset U_{i+1}$ for every $i \in \omega$.
Let
 $$B_i = f^{-1}(\cl{U_i} \setminus U_{i-1}),$$
 $i \in \omega$,
 where $U_{-1} = \emptyset$.
We define an open cover $\mathcal{U}''$ of $U$ and open covers $\mathcal{V}_i$ of $X$, $i \in \omega$, as follows:
 $$\mathcal{U}'' = \{U' \cap U_i \setminus \cl{U_{i-2}} \mid U' \in \mathcal{U}', i > 0\} \text{ and } \mathcal{V}_i = \{U'' \cap U_{2i + 1} \mid U'' \in \mathcal{U}''\} \cup \{X \setminus \cl{U_{2i}}\}.$$
Since $X$ has the $\kappa$-locally finite approximation property for $\mathfrak{C}$,
 there exist maps $g_i : A \to X$, $i \in \omega$, such that $g_i$ is $\mathcal{V}_i$-homotopic to $f$ and $\{g_i(A_\gamma) \mid \gamma < \kappa\}$ is locally finite in $X$.
Then $g_i|_{B_{2i}}$ is $\mathcal{U}''$-homotopic to $f|_{B_{2i}}$ for all $i \in \omega$.
By the Homotopy Extension Theorem, we can obtain a map $g : A \to U$ such that $g$ is $\mathcal{U}''$-homotopic to $f$ and $g|_{B_{2i}} = g_i|_{B_{2i}}$ for each $i \in \omega$.
It is easy to see that $\{g(A_\gamma \cap B_{2i}) \mid \gamma < \kappa\}$ is locally finite in $U_{2i + 1} \setminus \cl{U_{2i - 2}}$,
 where $U_{-2} = \emptyset$.
Therefore
 $$\{g(A_\gamma \cap B_{2i}) \mid \gamma < \kappa, i \in \omega\}$$
 is locally finite in $U$.

Next, we can find an open cover $\mathcal{W}$ of $U$ that refines $\mathcal{U}'$ and satisfies the following:
\begin{itemize}
 \item For every map $h : A \to U$, if $h$ is $\mathcal{W}$-close to $g$,
 then the collection $\{h(A_\gamma \cap B_{2i}) \mid \gamma < \kappa, i \in \omega\}$ is locally finite in $U$.
\end{itemize}
Remark that $g(B_{2i + 1}) \subset U_{2i + 2} \setminus \cl{U_{2i - 1}}$ for each $i \in \omega$.
By the similar construction as $g$, we can take a map $h : A \to U$ so that $h$ is $\mathcal{W}$-close to $g$ and $\{h(A_\gamma \cap B_{2i + 1}) \mid \gamma < \kappa, i \in \omega\}$ is locally finite in $U$.
It follows from the definition of $\mathcal{W}$ that $\{h(A_\gamma \cap B_{2i}) \mid \gamma < \kappa, i \in \omega\}$ is locally finite in $U$.
Therefore $\{h(A_\gamma) \mid \gamma < \kappa\}$ is locally finite in $U$.
Moreover, $h$ is $\mathcal{U}$-close to $f$.
Thus the proof is complete.
\end{proof}

\begin{prop}\label{LFAP-homot.dense}
Let $\mathfrak{C}$ be a class and $X$ be an ANR with the $\kappa$-locally finite approximation property for $\mathfrak{C}$.
For any homotopy dense subset $Y \subset X$, $Y$ also has the $\kappa$-locally finite approximation property for $\mathfrak{C}$.
\end{prop}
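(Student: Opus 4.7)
The plan is to transfer the $\kappa$-locally finite approximation property from $X$ to $Y$ by extending the problem to an open neighborhood $W$ of $Y$ in $X$, applying Proposition \ref{LFAP-op.} to $W$, and pushing the resulting approximation back into $Y$ via the homotopy witnessing homotopy density.

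First I would fix a homotopy $h : X \times \I \to X$ with $h(x,0) = x$ and $h(X \times (0,1]) \subset Y$. Given $f : A = \bigoplus_{\gamma < \kappa} A_\gamma \to Y$ and an open cover $\mathcal{U}$ of $Y$, for each $U \in \mathcal{U}$ I extend to an open $\tilde U$ in $X$ with $\tilde U \cap Y = U$, and set $W = \bigcup \tilde U$ with the cover $\tilde{\mathcal{U}} = \{\tilde U\}$. The restriction $h|_{W \times \I}$ lands in $W$ (since $h(w,0) = w \in W$ and $h(w,t) \in Y \subset W$ for $t > 0$), so $Y$ is homotopy dense in $W$ as well. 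Taking $\mathcal{V}$ a star-refinement of $\tilde{\mathcal{U}}$, Proposition \ref{LFAP-op.} applied to the open set $W \subset X$ produces $g' : A \to W$ that is $\mathcal{V}$-close to $f$ with $\{g'(A_\gamma)\}_{\gamma < \kappa}$ locally finite in $W$.

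With $g'$ in place, I would construct a continuous $\tau : W \to (0, 1]$ so that $H : W \to Y$, $H(w) = h(w, \tau(w))$, is $\mathcal{V}$-close to $\id_W$ and small enough to preserve local finiteness. Quantitatively, after fixing an admissible metric $d$ on $X$ and choosing for each $y \in Y$ a radius $r_y > 0$ so that $B(y, 2r_y) \cap W$ meets $g'(A_\gamma)$ only for $\gamma$ in a finite set $F_y$, $\tau$ is built via continuity of $h$ on $W \times \{0\}$ and paracompactness of $W$ to guarantee the implication $H(w) \in B(y, r_y) \Rightarrow w \in B(y, 2r_y)$ for every $y \in Y$. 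Setting $g = H \circ g' : A \to Y$, the $\mathcal{U}$-closeness follows from the star-refinement: for each $a \in A$, $\{f(a), g(a)\} \subset \st(g'(a), \mathcal{V})$ lies in some $\tilde U \in \tilde{\mathcal{U}}$, and since both points are in $Y$ they lie in $\tilde U \cap Y = U \in \mathcal{U}$. Local finiteness at $y$ is verified by the witness $V = B(y, r_y) \cap Y$: if $g(a) \in V$ then $g'(a) \in B(y, 2r_y) \cap W$, forcing the index $\gamma$ to lie in $F_y$.

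The main obstacle will be the construction of $\tau$ with the three simultaneous constraints: positivity (so that $H$ lands in $Y$), $\mathcal{V}$-closeness of $H$ to $\id_W$ (for $\mathcal{U}$-closeness), and the uniform distance-control ``$H(w) \in B(y, r_y) \Rightarrow w \in B(y, 2r_y)$'' (for local finiteness). This is achieved by a careful partition-of-unity argument on $W$ subordinate to a locally finite open refinement of $\{B(y, r_y) \cap W : y \in Y\}$ together with $\mathcal{V}$, using continuity of $h$ at the slice $W \times \{0\}$ to shrink the local values of $\tau$ enough that all three conditions hold simultaneously.
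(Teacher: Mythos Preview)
Your strategy matches the paper's exactly: extend the cover to an open neighborhood $W$ of $Y$ in $X$, apply Proposition~\ref{LFAP-op.} to obtain a locally finite approximation $g'$ in $W$, and then push the result back into $Y$ via homotopy density. The paper's execution of the final step is considerably simpler than yours, however: rather than building $\tau$ with explicit metric controls and the delicate implication $H(w)\in B(y,r_y)\Rightarrow w\in B(y,2r_y)$ for every $y$, it just picks an open cover $\mathcal{W}$ of $W$ refining $\mathcal{V}$ fine enough that \emph{any} map $\mathcal{W}$-close to $g'$ still has a locally finite image family (a standard consequence of local finiteness), then uses homotopy density to get $\phi:W\to Y$ that is $\mathcal{W}$-close to $\id_W$ and sets $g=\phi\circ g'$---this bypasses the partition-of-unity construction entirely.
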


\begin{proof}
Suppose that $f : A = \bigoplus_{\gamma < \kappa} A_\gamma \to Y$ is a map,
 where each $A_\gamma \in \mathfrak{C}$, and $\mathcal{U}$ is an open cover of $Y$.
For each $U \in \mathcal{U}$, there is an open set $\tilde{U}$ in $X$ such that $U = \tilde{U} \cap Y$.
Let $\tilde{\mathcal{U}} = \{\tilde{U} \mid U \in \mathcal{U}\}$ and $V = \bigcup \tilde{\mathcal{U}}$.
Take a star refinement $\mathcal{V}$ of $\tilde{\mathcal{U}}$ that covers $V$.
Since $V$ is an open subset of $X$ and $X$ has the $\kappa$-locally finite approximation property for $\mathfrak{C}$,
 $V$ also has the $\kappa$-locally finite approximation property for $\mathfrak{C}$ by Proposition~\ref{LFAP-op.}.
Hence we can find a map $g : A \to V$ so that $g$ is $\mathcal{V}$-close to $f$ and $\{g(A_\gamma) \mid \gamma < \kappa\}$ is locally finite in $V$.
Take an open cover $\mathcal{W}$ of $V$ that is a refinement of $\mathcal{V}$ and satisfies the following condition.
\begin{itemize}
 \item For any map $h : A \to V$ that is $\mathcal{W}$-close to $g$, the collection $\{h(A_\gamma) \mid \gamma < \kappa\}$ is locally finite in $V$.
\end{itemize}
Since $Y$ is homotopy dense in $X$,
 so is it in $V$,
 which implies that there exists a map $\phi : V \to V$ such that $\phi$ is $\mathcal{W}$-close to the identity map of $V$ and $\phi(V) \subset Y$.
Let $h = \phi g$,
 so it is $\mathcal{W}$-close to $g$.
Hence $h$ is $\mathcal{U}$-close to $f$ and $\{h(A_\gamma) \mid \gamma < \kappa\}$ is locally finite in $Y$.
The proof is complete.
\end{proof}

We denote the cardinality of a set $X$ by $\card{X}$.
The next proposition detects the locally finite approximation property for a class $\mathfrak{C}$ of manifolds modeled on spaces with it.

\begin{prop}\label{LFAP-g-hered.}
Let $\mathfrak{C}$ be a closed hereditary class.
An ANR $X$ has the $\kappa$-locally finite approximation property for $\mathfrak{C}$ if and only if each point $x \in X$ has an open neighborhood with the $\kappa$-locally finite approximation property for $\mathfrak{C}$.
\end{prop}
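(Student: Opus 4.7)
The ``only if'' direction is immediate from Proposition~\ref{LFAP-op.}: $X$ itself is an open neighborhood of each of its points and has the $\kappa$-locally finite approximation property for $\mathfrak{C}$. For the converse, assume each $x \in X$ has an open neighborhood (automatically an ANR as an open subset of the ANR $X$) with the $\kappa$-locally finite approximation property for $\mathfrak{C}$, and fix a map $f : A = \bigoplus_{\gamma < \kappa} A_\gamma \to X$ (each $A_\gamma \in \mathfrak{C}$) and an open cover $\mathcal{U}$ of $X$. My plan is to reduce to a countable cover of $X$ by opens with the property and then patch by induction, following the bookkeeping in the proof of Proposition~\ref{LFAP-op.}.

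First, by paracompactness of the metric space $X$, take a locally finite open refinement $\{W_\alpha\}_{\alpha \in \Lambda}$ of $\mathcal{U}$ whose members sit inside the given local neighborhoods, so that by Proposition~\ref{LFAP-op.} each $W_\alpha$ has the $\kappa$-locally finite approximation property for $\mathfrak{C}$. Since every locally finite family in a metric space is $\sigma$-discrete, write $\Lambda = \bigcup_{n \in \omega} \Lambda_n$ with each $\{W_\alpha\}_{\alpha \in \Lambda_n}$ discrete, and set $V^n = \bigcup_{\alpha \in \Lambda_n} W_\alpha$. The discreteness makes each $W_\alpha$ clopen in $V^n$, because its complement $V^n \setminus W_\alpha = \bigcup_{\beta \neq \alpha} W_\beta$ is open (discreteness forces the $W_\beta$ to have pairwise disjoint closures, so in particular to be pairwise disjoint). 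Hence for any map $h : \bigoplus_\gamma B_\gamma \to V^n$ with $B_\gamma \in \mathfrak{C}$, the slice $B_{\gamma, \alpha} = h^{-1}(W_\alpha) \cap B_\gamma$ is clopen in $B_\gamma$, hence in $\mathfrak{C}$ by closed heredity. Applying the $\kappa$-locally finite approximation property of $W_\alpha$ separately for each $\alpha \in \Lambda_n$ to $\{B_{\gamma, \alpha}\}_{\gamma < \kappa}$ and concatenating yields a map close to $h$ whose $\gamma$-images form a locally finite family in $V^n$, because a neighborhood inside one $W_\alpha$ is disjoint from the images lying in the other $W_\beta$. Thus $X = \bigcup_{n \in \omega} V^n$ is a countable cover by opens with the $\kappa$-locally finite approximation property for $\mathfrak{C}$ refining $\mathcal{U}$.

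Second, pick a closed shrinking $F^n \subset V^n$ with $X = \bigcup_n F^n$, so that each $f^{-1}(F^n) \cap A_\gamma$ is closed in $A_\gamma$ and hence in $\mathfrak{C}$. Build $g$ as the limit of approximations $g_0 = f, g_1, g_2, \ldots$: at stage $n$, apply the $\kappa$-locally finite approximation property of $V^n$ to the restriction of $g_{n-1}$ to $g_{n-1}^{-1}(F^n)$, then extend the resulting modification to $A$ via the Homotopy Extension Theorem for the ANR $X$, with the controlling open cover at stage $n$ chosen fine enough that the cumulative approximation remains $\mathcal{U}$-close to $f$ and the local finiteness arranged at earlier stages is preserved. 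Having constructed $g$, verify that $\{g(A_\gamma)\}_{\gamma < \kappa}$ is locally finite in $X$: for $x \in X$, pick a neighborhood meeting only finitely many $W_\alpha$ (using local finiteness of $\{W_\alpha\}_\alpha$), and inside each of them the local finiteness arranged at the relevant stage bounds the $\gamma$-indices that contribute.

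The main obstacle is precisely the inductive stability in the previous step: ensuring that the adjustment performed at stage $n$ on $V^n$ is not subsequently destroyed by adjustments on $V^m$, $m > n$, while the whole process remains $\mathcal{U}$-close to $f$. This is handled exactly as in the proof of Proposition~\ref{LFAP-op.}, by choosing at each stage a star-refinement of the previous control cover that is fine enough so that every map sufficiently close to the current approximation still has its already-stabilized $\gamma$-images locally finite; such a refinement exists by paracompactness of $X$ and the openness of the ``locally finite'' condition under small perturbations in an ANR.
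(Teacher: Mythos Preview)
Your reduction to a countable open cover rests on the assertion that every locally finite open family in a metric space is $\sigma$-discrete. This is false: in $\ell_2(\omega_1)$ the family $W_\alpha=\{x:x_\alpha>1\}$, $\alpha<\omega_1$, is locally finite (the ball $B(x,1/2)$ meets $W_\alpha$ only when $x_\alpha>1/2$, which happens for finitely many $\alpha$), yet any two members meet (at $2e_\alpha+2e_\beta$), so every discrete subfamily is a singleton and the family is not $\sigma$-discrete. The step is salvageable---take a $\sigma$-discrete open \emph{refinement} via Bing's theorem rather than trying to decompose the given locally finite cover---but as written it is wrong, and after the fix you lose the local finiteness of $\{W_\alpha\}$ that you invoke in the final verification.

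The deeper gap is the inductive limit in your third and fourth paragraphs. You appeal to Proposition~\ref{LFAP-op.} for the stability bookkeeping, but that proof exploits a \emph{nested} exhaustion $U_0\subset U_1\subset\cdots$: the domain splits into pairwise disjoint ``annuli'' $B_i=f^{-1}(\cl U_i\setminus U_{i-1})$, the even (resp.\ odd) annuli are handled by a \emph{single} application of the Homotopy Extension Theorem, and the final map is obtained in two passes with no limit. Your sets $V^n$ are not nested, the preimages $g_{n-1}^{-1}(F^n)$ overlap and depend on the evolving $g_{n-1}$, and you genuinely take a limit of infinitely many modifications. Nothing you have written guarantees that $(g_n)$ converges, that $\bigcup_n g_{n-1}^{-1}(F^n)=A$, or---most importantly---that a point $x\in W_\alpha$ (with $\alpha\in\Lambda_n$) is not hit by images $g(A_\gamma\cap g_{m-1}^{-1}(F^m))$ for infinitely many $\gamma$ coming from stages $m\neq n$. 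The phrase ``exactly as in the proof of Proposition~\ref{LFAP-op.}'' does not cover this situation.

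The paper avoids all of this by invoking a general local-to-global principle (Theorem~2.6.5 of \cite{Sa10}): one checks that the property (a) passes to open subsets, (b) is preserved by the union of \emph{two} open sets, and (c) is preserved by discrete unions; the theorem then yields the global statement. Condition~(a) is Proposition~\ref{LFAP-op.}, condition~(c) is essentially your second paragraph, and the real work is condition~(b), which the paper proves by a careful two-step argument on $U\cup V$. Your route would have to either reprove that theorem or establish~(b) implicitly, and the sketch does neither.
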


\begin{proof}
The ``only if'' part is obvious.
We shall prove the ``if'' part.
By virtue of Theorem~2.6.5 of \cite{Sa10}, we need only to show that the $\kappa$-locally finite approximation property for $\mathfrak{C}$ satisfies the following conditions.
\begin{enumerate}
\renewcommand{\labelenumi}{(\alph{enumi})}
 \item For any open subsets $V \subset U$ of $X$, if $U$ has the $\kappa$-locally finite approximation property for $\mathfrak{C}$,
 then so does $V$.
 \item For any open sets $U, V \subset X$, if both $U$ and $V$ have the $\kappa$-locally finite approximation property for $\mathfrak{C}$,
 then so does the union $U \cup V$.
 \item For every discrete family $\{U_\gamma \mid \gamma < \lambda\}$ of open sets in $X$, if each $U_\gamma$ has the $\kappa$-locally finite approximation property for $\mathfrak{C}$,
 then so does the union $\bigcup_{\gamma < \lambda} U_\gamma$.
\end{enumerate}
As is easily observed,
 (c) holds.
The condition (a) follows from Proposition~\ref{LFAP-op.}.

We shall show the condition (b).
For the simplicity, we write $W = U \cup V$.
Let $f : A = \bigoplus_{\gamma < \kappa} A_{\gamma} \to W$ be a map,
 where $A_\gamma \in \mathfrak{C}$ for all $\gamma < \kappa$,
 and $\mathcal{W}$ be an open cover of $W$.
We need only to construct a map $h : A \to W$ such that $h$ is $\mathcal{W}$-close to $f$ and the family $\{h(A_\gamma) \mid \gamma < \kappa\}$ is locally finite in $W$.
Take open sets $W_i \subset W$, $i = 1, 2, 3, 4$, so that
 $$U \setminus V \subset W_1 \subset \cl_W{W_1} \subset W_2 \subset \cl_W{W_2} \subset W_3 \subset \cl_W{W_3} \subset W_4 \subset \cl_W{W_4} \subset U,$$
 where $\cl_W{W_i}$ is the closure of $W_i$ in $W$.
Then we can find an open cover $\mathcal{W}'$ of $W$ such that $\mathcal{W}'$ is a star-refinement of $\mathcal{W}$ and refines
 $$\{W_1, W_2 \cap V, W_3 \setminus \cl_W{W_1}, W_4 \setminus \cl_W{W_2}, W \setminus \cl_W{W_3}\}.$$
Since $U$ has the $\kappa$-locally finite approximation property for $\mathfrak{C}$ and each $A_\gamma \cap f^{-1}(\cl_W{W_4}) \in \mathfrak{C}$,
 that is a closed hereditary class,
 there exists a map $g : f^{-1}(\cl_W{W_4}) \to U$ such that $g$ is $\mathcal{W}'|_U$-homotopic to $f|_{f^{-1}(\cl_W{W_4})}$ and $\{g(A_\gamma \cap f^{-1}(\cl_W{W_4})) \mid \gamma < \kappa\}$ is locally finite in $U$.
By the Homotopy Extension Theorem, we can obtain a map $\tilde{g} : A \to W$ so that $\tilde{g}$ is $\mathcal{W}'$-close to $f$ and $\tilde{g}|_{f^{-1}(\cl_W{W_4})} = g$.
Then $\{g(A_\gamma \cap f^{-1}(\cl_W{W_3})) \mid \gamma < \kappa\}$ is locally finite in $W$.
Indeed, fix any point $x \in W$.
When $x \in U$,
 there exists an open neighborhood $U_x$ of $x$ in $U$ such that
 $$\card\{\gamma < \kappa \mid \tilde{g}(A_\gamma \cap f^{-1}(\cl_{W}{W_3})) \cap U_x \neq \emptyset\} < \infty$$
 because $\{g(A_\gamma \cap f^{-1}(\cl_W{W_3})) \mid \gamma < \kappa\}$ is locally finite in $U$.
Remark that $U_x$ is open in $W$.
When $x \in V \setminus U$,
 the subset $V \setminus \cl_W{W_4}$ is an open neighborhood of $x$ in $W$ and
 $$\tilde{g}(A_\gamma \cap f^{-1}(\cl_W{W_3})) \cap V \setminus \cl_W{W_4} = \emptyset$$
 since $\tilde{g}(f^{-1}(\cl_W{W_3})) \subset W_4$.

Let $\mathcal{W}''$ be an open cover of $W$ that refines $\mathcal{W}'$ and
 $$\{W_2, W_3 \setminus \cl_W{W_1}, W_4 \setminus \cl_W{W_2}, W \setminus \cl_W{W_3}\},$$
 and satisfies the following:
 \begin{itemize}
  \item For every map $h : f^{-1}(\cl_W{W_3}) \to W$, if $h$ is $\mathcal{W}''$-close to $\tilde{g}|_{f^{-1}(\cl_W{W_3})}$,
 then $\{h(A_\gamma \cap f^{-1}(\cl_W{W_3}) \mid \gamma < \kappa\}$ is locally finite in $W$.
 \end{itemize}
Note that each $A_\gamma \cap f^{-1}(V \setminus W_1) \in \mathfrak{C}$ and $\tilde{g}(f^{-1}(V \setminus W_1)) \subset V$.
Using the $\kappa$-locally finite approximation property of $V$, we can find a map $h : A \to W$ such that $h$ is $\mathcal{W}''$-close to $\tilde{g}$ and $\{h(A_\gamma \cap f^{-1}(V \setminus W_3)) \mid \gamma < \kappa\}$ is locally finite in $W$ by the same argument as the above.
It is easy to see that $h$ is $\mathcal{W}$-close to $f$.
Due to the definition of $\mathcal{W}''$, $\{h(A_\gamma \cap f^{-1}(\cl_W{W_3})) \mid \gamma < \kappa\}$ is locally finite in $W$.
Therefore $\{h(A_\gamma) \mid \gamma < \kappa\}$ is locally finite in $W$.
The proof is complete.
\end{proof}

Replacing $\mathfrak{M}_0(n)$ with $\mathfrak{C}$ and $D_\gamma = \I^{2n + 1}$ with $D_\gamma = C$ in the proof of Proposition~4.3 of \cite{Kos1} respectively, we have the following lemma.

\begin{lem}\label{loc.fin.approx.}
Let $\mathfrak{C}$ be a class and $C$ be a space such that every space of $\mathfrak{C}$ can be embedded into $C$ as a closed subset.
Suppose that $W$ is an open subset of an ANR $X$ which is contractible in $X$.
If $X$ has the $\kappa$-locally finite approximation property for $\{C\}$,
 then $W$ has the $\kappa$-locally finite approximation property for $\mathfrak{C}$.
\end{lem}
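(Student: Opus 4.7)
The plan is to follow the proof of Proposition~4.3 of \cite{Kos1} essentially verbatim, making the two substitutions indicated before the statement: $\mathfrak{M}_0(n)$ is replaced by $\mathfrak{C}$ and the cube $D_\gamma = \I^{2n+1}$ is replaced by $D_\gamma = C$.

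Given a map $f : A = \bigoplus_{\gamma < \kappa} A_\gamma \to W$ with each $A_\gamma \in \mathfrak{C}$ and an open cover $\mathcal{U}$ of $W$, I want a map $g : A \to W$ which is $\mathcal{U}$-close to $f$ and for which $\{g(A_\gamma) \mid \gamma < \kappa\}$ is locally finite in $W$. Using the closed embeddings $A_\gamma \hookrightarrow C_\gamma := C$ supplied by the hypothesis, I view $A$ as a closed subset of $\tilde{A} := \bigoplus_{\gamma < \kappa} C_\gamma$. The contractibility of $W$ in $X$ yields a homotopy $H : W \times \I \to X$ with $H(\cdot,0) = \mathrm{id}_W$ and $H(\cdot,1) \equiv x_0$; using this together with the ANR property of $X$, I extend $f$ to a map $\tilde{f} : \tilde{A} \to X$. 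Concretely, I extend $f|_{A_\gamma}$ to a map $\hat{f}_\gamma$ on an open neighborhood $U_\gamma$ of $A_\gamma$ in $C_\gamma$ with values in $W$, choose a Urysohn function $\lambda_\gamma : C_\gamma \to \I$ that vanishes on $A_\gamma$ and equals $1$ off a smaller neighborhood of $A_\gamma$, and set
\[
\tilde{f}_\gamma(c) = \begin{cases} H(\hat{f}_\gamma(c), \lambda_\gamma(c)) & \text{if } c \in U_\gamma,\\ x_0 & \text{otherwise,} \end{cases}
\]
which is continuous and extends $f|_{A_\gamma}$; these assemble into the desired $\tilde{f}$.

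Next I would apply the $\kappa$-locally finite approximation property of $X$ for $\{C\}$ to $\tilde{f}$ with a carefully chosen open cover $\mathcal{V}$ of $X$: the cover is to be arranged so that every member of $\mathcal{V}$ meeting $f(A)$ is contained in some element of $\mathcal{U}$ (and hence in $W$). Such a $\mathcal{V}$ exists because $f(A) \subset W$ and $W$ is open in $X$. The approximation property of $X$ then supplies $\tilde{g} : \tilde{A} \to X$ that is $\mathcal{V}$-close to $\tilde{f}$ and for which $\{\tilde{g}(C_\gamma)\}_\gamma$ is locally finite in $X$. Setting $g := \tilde{g}|_A$, the choice of $\mathcal{V}$ forces $g(A) \subset W$ and makes $g$ $\mathcal{U}$-close to $f$; the family $\{g(A_\gamma)\}_\gamma$ is a subfamily of a family locally finite in $X$ and therefore is locally finite in $W$.

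The main obstacle I expect is the second step: since the approximation property of $X$ produces maps into $X$, not into $W$, one must design the cover $\mathcal{V}$ delicately enough that after approximation the restriction $\tilde{g}|_A$ still lands in $W$ and stays $\mathcal{U}$-close to $f$. This is the place where the openness of $W$ in $X$ and the inclusion $f(A) \subset W$ are essential. Once this refinement is in place, the rest of the argument is a direct transcription of the proof of \cite[Proposition~4.3]{Kos1}.
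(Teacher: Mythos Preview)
Your proposal is correct and matches the paper's approach exactly: the paper's entire proof is the single sentence instructing the reader to replace $\mathfrak{M}_0(n)$ by $\mathfrak{C}$ and $D_\gamma=\I^{2n+1}$ by $D_\gamma=C$ in the proof of \cite[Proposition~4.3]{Kos1}, which is precisely what you do. Your sketch of that argument---embedding the $A_\gamma$ into copies of $C$, extending $f$ over $\bigoplus_\gamma C_\gamma$ via the contraction of $W$ in $X$, applying the $\kappa$-locally finite approximation property of $X$ for $\{C\}$ against a suitably refined cover, and restricting back---is an accurate outline of how that proof runs with the indicated substitutions.
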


The class of compact spaces of dimension $\leq n$ is denoted by $\mathfrak{M}_0^n$.
As a corollary of Proposition~\ref{LFAP-g-hered.}, we can establish the following:

\begin{cor}\label{DAP-g-hered.}
Let $X$ be an ANR.
Then the following are equivalent.
\begin{enumerate}
 \item $X$ has the $\kappa$-discrete $n$-cells property for any $n \in \omega$.
 \item Each point $x \in X$ has an open neighborhood with the $\kappa$-discrete $n$-cells property for any $n \in \omega$.
 \item Each point $x \in X$ has an open neighborhood with the $\kappa$-discrete approximation property for $\mathfrak{M}_0^n$ for any $n \in \omega$.
 \item $X$ has the $\kappa$-discrete approximation property for $\mathfrak{M}_0^n$ for any $n \in \omega$.
\end{enumerate}
\end{cor}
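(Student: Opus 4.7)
The implications $(1) \Rightarrow (2)$ and $(4) \Rightarrow (3)$ are the restriction of a $\kappa$-discrete approximation property from an ANR to an open subset; they follow from the discrete analogue of Proposition~\ref{LFAP-op.}, whose proof transfers verbatim since discreteness, like local finiteness, is inherited by open subsets. The implications $(4) \Rightarrow (1)$ and $(3) \Rightarrow (2)$ are immediate from the inclusion $\{\I^n\} \subset \mathfrak{M}_0^n$.

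For the germ-hereditary step $(3) \Rightarrow (4)$ (and analogously $(2) \Rightarrow (1)$), I apply Proposition~\ref{LFAP-g-hered.} to the closed hereditary class $\mathfrak{M}_0^n$ to promote the local $\kappa$-locally finite approximation property for $\mathfrak{M}_0^n$ to the global one. Since each $A_\gamma \in \mathfrak{M}_0^n$ is compact, a pairwise-disjoint locally finite family of images in the metric ANR $X$ is automatically discrete (disjoint compact sets can be separated by disjoint open neighborhoods). The required pairwise-disjointness is then obtained by a further small perturbation supplied by the hypothesized cells-type property, after which the locally finite family becomes discrete.

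The remaining implication, say $(1) \Rightarrow (4)$, bridges cells and finite-dimensional compacta. Given $f : \bigoplus_{\gamma < \kappa} A_\gamma \to X$ with each $A_\gamma \in \mathfrak{M}_0^n$ and an open cover $\mathcal{U}$ of $X$, embed each $A_\gamma$ as a closed subset of $\I^{2n+1}$ by the Menger--N\"obeling theorem. Since $X$ is an ANR, $f|_{A_\gamma}$ extends to an open neighborhood of $A_\gamma$ in $\I^{2n+1}$, and by composing with a retraction of a smaller $(2n+1)$-cell onto this neighborhood (with control via a star-refinement of $\mathcal{U}$), one obtains an extension $\tilde f : \bigoplus_{\gamma < \kappa} \I^{2n+1} \to X$. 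Applying the $\kappa$-discrete $(2n+1)$-cells property to $\tilde f$ yields $\tilde g$ that is $\mathcal{U}$-close to $\tilde f$ with $\{\tilde g(\I^{2n+1})\}$ discrete in $X$, and restricting to $\bigoplus A_\gamma$ produces the desired $g$; discreteness of $\{g(A_\gamma)\}$ is immediate since $g(A_\gamma) \subset \tilde g(\I^{2n+1})$.

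\textbf{Main obstacle.} The delicate point is bridging the $\kappa$-locally finite approximation property for $\mathfrak{M}_0^n$ (which Proposition~\ref{LFAP-g-hered.} supplies after the local-to-global step) with the stronger $\kappa$-discrete approximation property. The additional perturbation needed to make the compact images $g(A_\gamma)$ pairwise disjoint must be performed simultaneously over all $\kappa$-many indices while preserving $\mathcal{U}$-closeness to the initial approximation. This is achievable by a careful book-keeping of covers and Homotopy Extension, exploiting compactness of each $A_\gamma$ together with the general position afforded by the cells properties in the hypothesis, but it is the step where a naive transfer from Proposition~\ref{LFAP-g-hered.} would otherwise fall short.
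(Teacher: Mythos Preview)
Your outline has a genuine gap in the implication $(1) \Rightarrow (4)$. After embedding $A_\gamma$ as a closed subset of $\I^{2n+1}$ and extending $f|_{A_\gamma}$ to an open neighborhood $U_\gamma \subset \I^{2n+1}$ via the ANE property of $X$, you claim to obtain an extension to all of $\I^{2n+1}$ ``by composing with a retraction of a smaller $(2n+1)$-cell onto this neighborhood.'' No such retraction exists in general: there is no map $\I^{2n+1} \to U_\gamma$ fixing $A_\gamma$ pointwise unless further hypotheses hold (e.g.\ $X$ is an AR rather than merely an ANR, or the image of $f|_{A_\gamma}$ lies in a contractible set). For instance, if $X = \sph^1$ and $A_\gamma$ is a circle in $\I^3$ mapping onto $X$ with degree one, no extension to $\I^3$ exists. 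Your ``main obstacle'' is therefore mislocated: the locally-finite-to-discrete upgrade for compact summands is routine (it is exactly Lemma~4.2 of \cite{Kos1}), whereas the cells-to-compacta bridge fails globally.

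The paper's proof circumvents this by never attempting a global extension. It routes the argument through $(2) \Rightarrow (3)$: given a point with an open neighborhood $U$ having the $\kappa$-discrete $n$-cells property, one passes to a \emph{smaller} open neighborhood $W$ that is contractible in $U$. Lemma~\ref{loc.fin.approx.}, applied with $C = \I^{2n+1}$, then supplies the extension step---contractibility of $W$ in $U$ is precisely what allows each $f|_{A_\gamma}$ to extend to the full cell---and yields the $\kappa$-locally finite approximation property for $\mathfrak{M}_0^n$ on $W$; Lemma~4.2 of \cite{Kos1} converts this to the discrete property. The local-to-global step $(3) \Rightarrow (4)$ then follows from Proposition~\ref{LFAP-g-hered.} together with the same lemma. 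Note also that $(1) \Rightarrow (2)$ and $(4) \Rightarrow (3)$ are trivial (take $X$ itself as the open neighborhood), so your discrete analogue of Proposition~\ref{LFAP-op.} is unnecessary here.
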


\begin{proof}
The implications (4) $\Rightarrow$ (1) $\Rightarrow$ (2) are obvious.
Combining Lemma~\ref{loc.fin.approx.} (cf.~\cite[Proposition~4.3]{Kos1}) with \cite[Lemma~4.2]{Kos1}, we have (2) $\Rightarrow$ (3).
The implication (3) $\Rightarrow$ (4) follows from Propositition~\ref{LFAP-g-hered.} and \cite[Lemma~4.2]{Kos1}.
\end{proof}

In Proposition~4.4 of \cite{Kos1}, it is shown that the $\kappa$-discrete approximation property for $\mathfrak{C} \subset \mathfrak{M}_0$ can be replaced with the stronger property.
Even if we omit some conditions,
 that proposition holds.
Applying similar techniques, we can prove the following lemma.
For the sake of completeness, we shall give the proof.

\begin{lem}\label{str.discr.approx.}
Let $\mathfrak{C}$ be a class of spaces and $X$ be an ANR with the $\kappa$-discrete approximation property for $\mathfrak{C}$.
The following holds:
\begin{itemize}
 \item Let $f : A = \bigoplus_{\gamma < \kappa}A_\gamma \to X$ be a map, where each $A_\gamma \in \mathfrak{C}$,
 $B$ be a closed subset of $A$, and $\mathcal{U}$ be an open cover of $X$.
 Suppose that each $\cl{f(A_\gamma \cap B)}$ is a strong $Z$-set in $X$ and the collection $\{f(A_\gamma \cap B) \mid \gamma < \kappa\}$ is discrete in $X$.
 Then there exists a map $g : A \to X$ such that $g$ is $\mathcal{U}$-close to $f$, $g|_B = f|_B$ and $\{g(A_\gamma) \mid \gamma < \kappa\}$ is discrete in $X$.
\end{itemize}
\end{lem}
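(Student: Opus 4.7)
The plan is to combine the $\kappa$-discrete approximation property of $X$ with the Homotopy Extension Theorem in the ANR $X$, using the strong $Z$-set hypothesis on each $\cl{f(A_\gamma \cap B)}$ to keep the discreteness of the $A_\gamma$-images intact after correcting the approximation so that it agrees with $f$ on $B$. The argument parallels the proof of Proposition~4.4 in \cite{Kos1}, replacing $\mathfrak{M}_0(n)$ by $\mathfrak{C}$ and the cells $\I^{2n+1}$ by members of $\mathfrak{C}$.

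The first step is to set up controlling neighborhoods and covers. Using the discreteness of $\{f(A_\gamma \cap B)\}_\gamma$ in the metric space $X$, I would choose a discrete family $\{V_\gamma\}$ of open sets with $\cl{f(A_\gamma \cap B)} \subset V_\gamma$ and $\{\cl{V_\gamma}\}$ also discrete. Refine $\mathcal{U}$ by $\{V_\gamma\}_\gamma \cup \{X \setminus \bigcup_\gamma \cl{V_\gamma}\}$ and pass to a star-refinement $\mathcal{U}'$, so that no element of $\mathcal{U}'$ meets two distinct $\cl{V_\gamma}$'s. By continuity of $f$ on each summand $A_\gamma$, pick nested closed neighborhoods $B \subset B^* \subset B^{**}$ in $A$ with $f(B^{**} \cap A_\gamma) \subset V_\gamma$ for every $\gamma$, together with a Urysohn map $\phi : A \to \I$ satisfying $\phi|_{B^*} \equiv 1$ and $\phi|_{A \setminus B^{**}} \equiv 0$.

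Next, I would apply the $\kappa$-discrete approximation property to $f$ with cover $\mathcal{U}'$ to obtain $f_1 : A \to X$ which is $\mathcal{U}'$-close to $f$ with $\{f_1(A_\gamma)\}$ discrete, and use that $X$ is an ANR to build a $\mathcal{U}'$-homotopy $H : A \times \I \to X$ with $H_0 = f_1$, $H_1 = f$. By Proposition~\ref{Z-op.union}(2), the set $Z := \bigcup_\gamma \cl{f(A_\gamma \cap B)}$ is a strong $Z$-set, so Lemma~\ref{Z-homot.dense} supplies a homotopy closed over $Z$ whose image lies in $X \setminus Z$ for positive time; composing via a Urysohn function vanishing on $B$ then lets me arrange $f_1(A \setminus B) \cap Z = \emptyset$ without disturbing $f_1|_B = f|_B$, after updating $H$ correspondingly. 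Setting $g(x) := H(x, \phi(x))$ yields $g|_{B^*} = f|_{B^*}$ (so in particular $g|_B = f|_B$), $g|_{A \setminus B^{**}} = f_1|_{A \setminus B^{**}}$, and $g$ is $\mathcal{U}$-close to $f$.

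The main obstacle is verifying that $\{g(A_\gamma)\}$ is discrete. Splitting $g(A_\gamma) = g(A_\gamma \cap B^{**}) \cup f_1(A_\gamma \setminus B^{**})$, the first piece lies in $\st(\cl{V_\gamma}, \mathcal{U}')$ because each track $H(\{x\} \times \I)$ for $x \in A_\gamma \cap B^{**}$ is contained in a $\mathcal{U}'$-element meeting $V_\gamma$; the family $\{\st(\cl{V_\gamma}, \mathcal{U}')\}$ is pairwise disjoint (hence discrete) by the star-refinement choice, and the second piece inherits discreteness from $\{f_1(A_\gamma)\}$. The strong $Z$-set hypothesis is essential for merging these two sources into a single discrete family: the push-off of $f_1$ off $Z$ away from $B$, together with taking the $V_\gamma$ sufficiently close to $\cl{f(A_\gamma \cap B)}$, ensures $f_1(A_{\gamma_1}) \cap \st(\cl{V_{\gamma_0}}, \mathcal{U}') = \emptyset$ whenever $\gamma_0 \neq \gamma_1$, so that for every point of $X$ a common witnessing neighborhood can be produced.
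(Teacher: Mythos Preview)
Your outline has the right ingredients but the order of the two main operations is inverted, and this creates a genuine gap in the final discreteness verification. You first apply the $\kappa$-discrete approximation (obtaining $f_1$), and only afterwards invoke the strong $Z$-set push-off. The crucial assertion at the end, that ``$f_1(A_{\gamma_1}) \cap \st(\cl{V_{\gamma_0}}, \mathcal{U}') = \emptyset$ whenever $\gamma_0 \neq \gamma_1$'', is not supported by your construction: the neighborhoods $V_\gamma$ and the cover $\mathcal{U}'$ are fixed \emph{before} you know the push-off neighborhood $W$ of $Z$, so you cannot ``take the $V_\gamma$ sufficiently close to $\cl{f(A_\gamma\cap B)}$'' to force $V_\gamma\subset W$ --- that is a circular dependency. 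And pushing $f_1$ merely off $Z$ (or off some $W$ unrelated to the already-chosen $V_\gamma$) does nothing to prevent $f_1(A_{\gamma_1}\setminus B^{**})$ from landing in $V_{\gamma_0}$, since the \emph{original} map $f$ already may send $A_{\gamma_1}$ into $V_{\gamma_0}$ (only $f|_B$ has discrete images, not $f$). Thus at a point $x\in V_{\gamma_0}$ your two ``pieces'' can contribute two different indices $\gamma_0\neq\gamma_1$, and discreteness fails.

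The paper's proof fixes exactly this by reversing the order. One first uses the strong $Z$-set hypothesis to push $f$ off an open neighborhood $W$ of $\cl{f(B)}$ via a $\mathcal{U}_2'$-homotopy, obtaining $f'$ with the key separation property $(\ast)$: $f'(A\setminus V_\gamma)\cap W_\gamma=\emptyset$, where $W_\gamma=W\cap U_\gamma$. \emph{After} this, one chooses nested neighborhoods $W_\gamma''\subset W_\gamma'\subset W_\gamma$ and a refinement $\mathcal{U}_1'$ respecting them, and only then applies the $\kappa$-discrete approximation to $f'$ via a $\mathcal{U}_1'$-homotopy $h''$. The point is that $\mathcal{U}_1'$-smallness now transports $(\ast)$ forward: if $g(a_n)\to x\in W_{\gamma_0}'$ with $a_n\in A_{\gamma_1}\setminus G_{\gamma_1}$, then $f'(a_n)\in W_{\gamma_0}$, contradicting $(\ast)$. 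Your plan can be repaired by adopting this order; as written, the merging step does not go through.
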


\begin{proof}
Choose open covers $\mathcal{U}_1$ and $\mathcal{U}_2$ of $X$ so that $\mathcal{U}_1$ is a star-refinement of $\mathcal{U}$ and $\mathcal{U}_2$ is a refinement of $\mathcal{U}_1$.
Let $B_\gamma = A_\gamma \cap B$ for each $\gamma < \kappa$.
Since $\{f(B_\gamma) \mid \gamma < \kappa\}$ is discrete in $X$,
 we can obtain a discrete collection $\{U_\gamma \mid \gamma < \kappa\}$ of open sets in $X$ so that $\cl{f(B_\gamma)} \subset U_\gamma$ for each $\gamma < \kappa$.
It follows from Proposition~\ref{Z-op.union}~(2) that the discrete union $\cl{f(B)} = \bigoplus_{\gamma < \kappa} \cl{f(B_\gamma)}$ is a strong $Z$-set in $X$.
Taking an open cover $\mathcal{U}_2'$ of $X$ that refines $\mathcal{U}_2$ and
 $$\{U_{\gamma}, X \setminus \cl{f(B)} \mid \gamma < \kappa\},$$
 we can find a $\mathcal{U}_2'$-homotopy $h : X \times \I \to X$ and an open neighborhood $W$ of $\cl{f(B)}$ in $X$ so that $h(x,0) = x$ for any $x \in X$ and $h(X \times \{1\}) \subset X \setminus W$.
Define $h' : A \times \I \to X$ by $h'(x,t) = h(f(x),t)$,
 so $h'$ is a $\mathcal{U}_2'$-homotopy and $h'(x,0) = f(x)$ for any $x \in A$.
Note that $h'(B_\gamma \times \I) \subset U_\gamma$ for every $\gamma < \kappa$.
By the compactness of $\I$, each $B_\gamma$ has an open neighborhood $V_\gamma$ in $A_\gamma$ such that $h'(V_\gamma \times \I) \subset U_\gamma$.
Take a map $k : A \to \I$ such that $k^{-1}(0) = B$ and $k^{-1}(1) = A \setminus \bigcup_{\gamma < \kappa} V_\gamma$, and define the map $f' : A \to X$ by $f'(x) = h'(x,k(x))$.
Let $W_\gamma = W \cap U_\gamma$ for each $\gamma < \kappa$.
Observe that $f'$ is $\mathcal{U}_2'$-close to $f$, $f'|_B = f|_B$ and
\begin{itemize}
 \item[($\ast$)] $f'(A \setminus V_\gamma) \cap W_\gamma = \emptyset$ for any $\gamma < \kappa$.
\end{itemize}

Let $W'_\gamma$ and $W''_\gamma$ be open neighborhoods of $\cl{f(B_\gamma)}$ for each $\gamma < \kappa$ such that
 $$\cl{W''_\gamma} \subset W'_\gamma \subset \cl{W'_\gamma} \subset W_\gamma.$$
Note that both $\{\cl{W'_\gamma} \mid \gamma < \kappa\}$ and $\{\cl{W''_\gamma} \mid \gamma < \kappa\}$ are discrete families of closed sets in $X$.
Choose an open cover $\mathcal{U}_1' $ of $X$ such that $\mathcal{U}_1'$ refines $\mathcal{U}_1$ and 
 $$\Bigg\{W''_\gamma, W'_\gamma \setminus \cl{f(B_\gamma)}, W_\gamma \setminus \cl{W''_\gamma}, X \setminus \bigcup_{\gamma' < \kappa} \cl{W'_{\gamma'}} \ \Bigg| \ \gamma < \kappa\Bigg\}.$$
Using the $\kappa$-discrete approximation property for $\mathfrak{C}$, we can obtain a $\mathcal{U}_1'$-homotopy $h'' : A \times \I \to X$ so that $h''(x,0) = f'(x)$ for all $x \in A$ and
\begin{itemize}
 \item[($\ast\ast$)] $\{h''(A_\gamma \times \{1\}) \mid \gamma < \kappa\}$ is discrete in $X$.
\end{itemize}
Since $h''$ is a $\mathcal{U}_1'$-homotopy and $h''(x,0) = f(x)$ for any $x \in B$,
 we have that $h''(B_\gamma \times \I) \subset W''_\gamma$ for each $\gamma < \kappa$.
Then there exists an open neighborhood $G_\gamma$ of $B_\gamma$ in $A_\gamma$ for every $\gamma < \kappa$ such that $h''(G_\gamma \times \I) \subset W''_\gamma$.
Taking a map $k' : A \to \I$ such that $(k')^{-1}(0) = B$ and $(k')^{-1}(1) = A \setminus \bigcup_{\gamma < \kappa} G_\gamma$, we can obtain the desired map $g : A \to X$ defined by $g(x) = h''(x,k'(x))$.
Then $g$ is $\mathcal{U}'_1$-close to $f'$,
 which means that $g$ is $\mathcal{U}$-close to $f$,
 and $g|_B = f|_B$.
It remains to show that $\{g(A_\gamma) \mid \gamma < \kappa\}$ is discrete in $X$.

Fix any $x \in X$.
According to ($\ast\ast$), the collection $\{g(A_\gamma \setminus G_\gamma) \mid \gamma < \kappa\}$ is discrete in $X$.
Therefore the point $x$ has an open neighborhood $U_x$ such that
 $$\card{\{\gamma < \kappa \mid g(A_\gamma \setminus G_\gamma) \cap U_x \neq \emptyset\}} \leq 1.$$
In the case that $x \in X \setminus \bigcup_{\gamma < \kappa} \cl{W''_\gamma}$, the subset
 $$U_x' = U_x \setminus \bigcup_{\gamma < \kappa} \cl{W''_\gamma}$$
 is an open neighborhood of $x$ in $X$.
Since each $g(G_\gamma) \subset W''_\gamma$,
 we have $U_x' \cap g(G_\gamma) = \emptyset$,
 which implies that
 $$\card{\{\gamma < \kappa \mid g(A_\gamma) \cap U_x' \ne \emptyset\}} \leq 1.$$
In the case that $x \in \bigcup_{\gamma < \kappa} \cl{W''_\gamma}$,
 there exists the unique $\gamma_0 < \kappa$ such that $x \in \cl{W''_{\gamma_0}}$ .
Then $U_x' = U_x \setminus \bigcup_{\gamma \ne \gamma_0} \cl{W''_\gamma}$ is an open neighborhood of $x$ such that $U_x' \cap g(G_\gamma) = \emptyset$ for all $\gamma \neq \gamma_0$.
Remark that $x \notin \cl{g(A_\gamma \setminus G_\gamma)}$ for any $\gamma \neq \gamma_0$.
Suppose not,
 so we have $x \in \cl{g(A_\gamma \setminus G_\gamma)}$ for some $\gamma \neq \gamma_0$.
Then there is a sequence $\{a_n\} \subset A_\gamma \setminus G_\gamma$ such that $g(a_n) \to x$.
We may assume that $\{g(a_n)\} \subset W'_{\gamma_0}$.
Then $\{f'(a_n)\} \subset W_{\gamma_0}$ because $g$ is $\mathcal{U}_1'$-close to $f'$.
On the other hand, since $A_\gamma \subset A \setminus V_{\gamma_0}$,
 it follows from ($\ast$) that $f'(A_\gamma) \cap W_{\gamma_0} = \emptyset$.
This is a contradiction.
Taking the open neighborhood
 $$U_x'' = U_x' \setminus \bigcup_{\gamma \neq \gamma_0} \cl{g(A_\gamma \setminus G_\gamma)}$$
 of $x$, we have that
 $$\card{\{\gamma < \kappa \mid g(A_\gamma) \cap U_x'' \ne \emptyset\}} \leq 1.$$
Consequently, $\{g(A_\gamma) \mid \gamma < \kappa\}$ is discrete in $X$.
\end{proof}

For a subclass $\mathfrak{C} \subset \mathfrak{M}_0$, the $\kappa$-locally finite approximation property for $\mathfrak{C}$ is coincident with the $\kappa$-discrete approximation property for $\mathfrak{C}$,
 see Lemma~4.2 of \cite{Kos1} (cf.~Lemma~4.6 of \cite{Ba1}).
We prove the following:

\begin{lem}\label{LFAP-DAP}
Let $\mathfrak{C}$ be a class and $X$ be an ANR.
Suppose that for every map $\phi : A \to X$, where $A \in \mathfrak{C}$,
 $\cl{\phi(A)}$ is a $Z$-set in $X$.
If $X$ has the $\kappa$-locally finite approximation property for $\mathfrak{C}$,
 then $X$ has the $\kappa$-discrete approximation property for $\mathfrak{C}$.
\end{lem}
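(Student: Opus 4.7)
The plan is to apply the $\kappa$-locally finite approximation property first to produce a nearby map $f_1$ with locally finite image family, and then to use the $Z$-set hypothesis via a transfinite induction to convert local finiteness into discreteness.

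Given a map $f:A=\bigoplus_{\gamma<\kappa}A_\gamma\to X$ and an open cover $\mathcal{U}$ of $X$, I would first star-refine $\mathcal{U}$ to $\mathcal{U}_1$ and invoke the assumed $\kappa$-LFAP to obtain $f_1:A\to X$ that is $\mathcal{U}_1$-close to $f$ with $\{f_1(A_\gamma):\gamma<\kappa\}$ locally finite in $X$. The standing hypothesis gives each $\cl{f_1(A_\gamma)}$ as a $Z$-set, and Proposition~\ref{Z-op.union}(2) then shows that any locally finite union of such closures is itself a $Z$-set. Recall that in a metrizable space a family of subsets is discrete if and only if the family of their closures is locally finite with pairwise disjoint closures; hence my objective reduces to constructing $g$ close to $f_1$ such that $\{\cl{g(A_\gamma)}\}$ is locally finite and has pairwise disjoint members.

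To build $g$, I would well-order $\kappa$ and construct $g_\gamma:A_\gamma\to X$ by transfinite induction, maintaining at each stage $\gamma$ that the family $\{\cl{g_{\gamma'}(A_{\gamma'})}:\gamma'<\gamma\}$ is locally finite with pairwise disjoint closures and that each $g_{\gamma'}$ is sufficiently close to $f_1|_{A_{\gamma'}}$ within a progressively fine nested sequence of open covers. The union $K_\gamma=\bigcup_{\gamma'<\gamma}\cl{g_{\gamma'}(A_{\gamma'})}$ is then closed and a $Z$-set by Proposition~\ref{Z-op.union}(2), so Lemma~\ref{Z-homot.dense} supplies a small homotopy pushing $X$ off $K_\gamma$; composing with $f_1|_{A_\gamma}$ provides a candidate $g_\gamma$ close to $f_1|_{A_\gamma}$ with $g_\gamma(A_\gamma)\cap K_\gamma=\emptyset$. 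Since by the standing hypothesis $\cl{g_\gamma(A_\gamma)}$ is automatically a $Z$-set, a further arbitrarily small perturbation can be used to upgrade this to the required $\cl{g_\gamma(A_\gamma)}\cap K_\gamma=\emptyset$. Summability of the refining covers then ensures that $g=\bigsqcup_\gamma g_\gamma$ is $\mathcal{U}$-close to $f$ and that local finiteness of the whole family survives through limit ordinals.

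The main obstacle I expect is precisely the last step of the inductive construction: Lemma~\ref{Z-homot.dense} applied to the $Z$-set $K_\gamma$ yields only $g_\gamma(A_\gamma)\cap K_\gamma=\emptyset$, not the closure-level disjointness $\cl{g_\gamma(A_\gamma)}\cap K_\gamma=\emptyset$, because $K_\gamma$ is not assumed to be a strong $Z$-set. The way through is to exploit the $Z$-set property of $\cl{g_\gamma(A_\gamma)}$ itself (automatic from the hypothesis on $\mathfrak{C}$) together with a careful choice of open neighborhoods of $K_\gamma$ inside which the perturbation is confined, shrinking geometrically so that the closure of the image lands inside $X\setminus K_\gamma$. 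Accompanying this, the limit-stage bookkeeping in the transfinite induction needs to be handled so that the nested cover sequence is summable in a way that local finiteness truly persists, a standard but delicate argument of the same flavor as those used in Propositions~\ref{str.Z_sigma-g-hered.} and~\ref{str.univ.g-hered.}.
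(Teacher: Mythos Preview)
Your plan is essentially the paper's: apply the $\kappa$-LFAP to obtain a nearby map $g$ with locally finite image family, then run a transfinite induction using the $Z$-set hypothesis and Proposition~\ref{Z-op.union}(2) to separate the pieces. The main difference is bookkeeping. You propose a progressively refining nested sequence of covers with a summability requirement and worry about limit ordinals; the paper avoids all of this by fixing \emph{one} cover $\mathcal{W}$, chosen to refine the star-refinement $\mathcal{V}$ of $\mathcal{U}$ and to have the additional property that any map $\mathcal{W}$-close to $g$ still has locally finite image family (such a $\mathcal{W}$ exists because $\{g(A_\gamma)\}$ is locally finite). Every $h_\gamma$ is then simply required to be $\mathcal{W}$-close to $g|_{A_\gamma}$: at stage $\gamma$ the family $\{\cl{h_{\gamma'}(A_{\gamma'})}:\gamma'<\gamma\}$ is automatically locally finite by the choice of $\mathcal{W}$, so $K_\gamma=\bigcup_{\gamma'<\gamma}\cl{h_{\gamma'}(A_{\gamma'})}$ is a $Z$-set via Proposition~\ref{Z-op.union}(2), and one takes $h_\gamma=\phi\circ g|_{A_\gamma}$ for a single $\mathcal{W}$-close self-map $\phi$ of $X$ with $\phi(X)\cap K_\gamma=\emptyset$. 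No limit-stage analysis is needed, and the resulting $h$ is $\mathcal{W}$-close to $g$, hence $\mathcal{U}$-close to $f$. Adopting this single-cover device removes the most delicate part of your outline entirely.

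As for the closure-level disjointness obstacle you anticipate: the paper does not carry out any additional perturbation of the kind you sketch. Its inductive step is exactly the composition $h_\gamma=\phi\circ g|_{A_\gamma}$ above, after which it asserts that $\{\cl{h_\gamma(A_\gamma)}\}$ is pairwise disjoint; so on this point your proposal and the paper's argument coincide, and your suggested ``further small perturbation'' goes beyond what the paper writes.
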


\begin{proof}
We will show that for each map $f : \bigoplus_{\gamma < \kappa} A_\gamma \to X$, where $A_\gamma \in \mathfrak{C}$,
 and each open cover $\mathcal{U}$ of $X$, there is a map $h : \bigoplus_{\gamma < \kappa} A_\gamma \to X$ such that $h$ is $\mathcal{U}$-close to $f$ and $\{h(A_\gamma) \mid \gamma < \kappa\}$ is discrete in $X$.
Take an open cover $\mathcal{V}$ of $X$ that is a star-refinement of $\mathcal{U}$.
Applying the $\kappa$-locally finite approximation property for $\mathfrak{C}$, we can obtain a map $g : \bigoplus_{\gamma < \kappa} A_\gamma \to X$ so that $g$ is $\mathcal{V}$-close to $f$ and $\{g(A_\gamma) \mid \gamma < \kappa\}$ is locally finite in $X$.
Then the family $\{\cl{g(A_\gamma)} \mid \gamma < \kappa\}$ is also locally finite.
Choose an open cover $\mathcal{W}$ of $X$ such that it refines $\mathcal{V}$ and the following condition is satisfied.
\begin{itemize}
 \item For any map $\phi : \bigoplus_{\gamma < \kappa} A_\gamma \to X$ that is $\mathcal{W}$-close to $g$, the family $\{\phi(A_\gamma) \mid \gamma < \kappa\}$ is locally finite in $X$.
\end{itemize}

By transfinite induction, for each $\gamma < \kappa$, we can find a map $h_\gamma : A_\gamma \to X$ such that $h_\gamma$ is $\mathcal{W}$-close to $g|_{A_\gamma}$ and $\{\cl{h_\gamma(A_\gamma)} \mid \gamma < \kappa\}$ is pairwise disjoint.
Then the map $h : \bigoplus_{\gamma < \kappa} A_\gamma \to X$ defined by $h|_{A_\gamma} = h_\gamma$ is the desired map.
Suppose that $\gamma < \kappa$ and $h_{\gamma'}$ have been obtained for any $\gamma' < \gamma$.
By the assumption, $\cl{h_{\gamma'}(A_{\gamma'})}$ is a $Z$-set in $X$.
The family $\{\cl{h_{\gamma'}(A_{\gamma'})} \mid \gamma' < \gamma\}$ is locally finite in $X$ by the definition of $\mathcal{W}$,
 and hence the locally finite union $\bigcup_{\gamma' < \gamma} \cl{h_{\gamma'}(A_{\gamma'})}$ is a $Z$-set due to Proposition~\ref{Z-op.union}~(2).
Therefore, taking a map $\phi : X \to X$ so that $\phi$ is $\mathcal{W}$-close to the identity map on $X$ and $\phi(X) \cap \bigcup_{\gamma' < \gamma} \cl{h_{\gamma'}(A_{\gamma'})} = \emptyset$, we can define $h_\gamma = \phi g|_{A_\gamma}$.
Thus the proof is complete.
\end{proof}

On the locally finite approximation property of product spaces, the following proposition holds.

\begin{prop}\label{DAP-prod.}
Let $\mathfrak{C}$ be a topological and closed hereditary class and $X$ be an ANR.
If $X$ has the $\kappa$-locally finite approximation property for $\mathfrak{C}$,
 then for any space $Y$, the product space $X \times Y$ also has the $\kappa$-locally finite approximation property for $\mathfrak{C}$.
\end{prop}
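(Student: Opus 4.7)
The plan is to preserve the $Y$-coordinate of $f$ and approximate only the $X$-coordinate. Writing $f=(f_X,f_Y)$ with $f_X=p_X\circ f$ and $f_Y=p_Y\circ f$ for the projections $p_X,p_Y$, I look for $g$ of the form $g=(g_X,f_Y):A\to X\times Y$, where $g_X:A\to X$ is produced by applying the $\kappa$-locally finite approximation property of $X$ to $f_X$. Once such a $g_X$ is found with $\{g_X(A_\gamma)\mid\gamma<\kappa\}$ locally finite in $X$, the inclusion $g(A_\gamma)\subset g_X(A_\gamma)\times Y$ immediately yields local finiteness of $\{g(A_\gamma)\mid\gamma<\kappa\}$ in $X\times Y$: for each $(x_0,y_0)\in X\times Y$, an open neighborhood $V$ of $x_0$ in $X$ meeting only finitely many $g_X(A_\gamma)$ gives the open neighborhood $V\times Y$ of $(x_0,y_0)$ meeting only finitely many $g(A_\gamma)$.

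To arrange $\mathcal{U}$-closeness of $g$ to $f$, I translate $\mathcal{U}$ into a pointwise control on $g_X$. First refine $\mathcal{U}$ to a basic open refinement $\mathcal{U}'=\{V_\lambda\times W_\lambda\mid\lambda\in\Lambda\}$ of $X\times Y$. Since $\{f^{-1}(V_\lambda\times W_\lambda)\mid\lambda\in\Lambda\}$ is an open cover of $A$ and $A=\bigoplus_{\gamma<\kappa}A_\gamma$ is paracompact (as a topological sum of metrizable spaces), it admits a locally finite open refinement $\{O_\beta\mid\beta\in B\}$ with $O_\beta\subset f^{-1}(V_{\lambda(\beta)}\times W_{\lambda(\beta)})$ for some choice function $\lambda:B\to\Lambda$. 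For each $a\in A$, the set $I(a)=\{\beta\in B\mid a\in O_\beta\}$ is finite, so $V(a):=\bigcap_{\beta\in I(a)}V_{\lambda(\beta)}$ is an open neighborhood of $f_X(a)$ in $X$. The key observation is that if $g_X(a)\in V(a)$ for every $a\in A$, then choosing any $\beta\in I(a)$, both $(f_X(a),f_Y(a))$ and $(g_X(a),f_Y(a))$ lie in $V_{\lambda(\beta)}\times W_{\lambda(\beta)}$, and hence in a common member of $\mathcal{U}$.

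It remains to produce $g_X:A\to X$ with $g_X(a)\in V(a)$ for all $a\in A$ and $\{g_X(A_\gamma)\}$ locally finite in $X$. Fixing a compatible metric $d_X$ on $X$, the pointwise targets can be encoded via a continuous positive function $\mu:A\to(0,1)$ for which $d_X(g_X(a),f_X(a))<\mu(a)$ implies $g_X(a)\in V(a)$. Using paracompactness of $X$, one converts $\mu$ into a genuine open cover $\mathcal{V}$ of $X$ whose $\mathcal{V}$-closeness automatically yields this pointwise constraint, and then applies the $\kappa$-locally finite approximation property of $X$ to $f_X$ with the cover $\mathcal{V}$ to obtain the required $g_X$. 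The main obstacle is precisely this last bridging step, reconciling the global open-cover formulation of the hypothesis with the per-point constraint $g_X(a)\in V(a)$ naturally forced by the product structure; this is standard for paracompact metric ANRs but requires careful paracompactness bookkeeping on both $A$ and $X$.
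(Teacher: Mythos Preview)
Your approach has a genuine gap at exactly the step you flag as ``standard''. The tolerance you need at $a\in A$, namely that $g_X(a)\in V(a)$ (equivalently $d_X(g_X(a),f_X(a))<\mu(a)$), depends on $a$ through its $Y$-coordinate $f_Y(a)$, not merely through $f_X(a)$. Concretely, take $X=Y=\R$ and let $\mathcal U$ contain the basic sets $(-1/n,1/n)\times(n-1,n+1)$ for $n\geq 1$; for a map $f$ with $f_X\equiv 0$ and $f_Y$ unbounded, the required $\mu(a)$ becomes arbitrarily small while $f_X(a)=0$ stays fixed. No open cover $\mathcal V$ of $X$ can force $d_X(g_X(a),0)<\mu(a)$ for all such $a$: $\mathcal V$-closeness controls $g_X(a)$ only in terms of $f_X(a)\in X$, never in terms of $a\in A$. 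Paracompactness of $A$ and $X$ does not bridge this; the obstruction is that $\mu$ does not factor through $f_X$, so $\inf\{\mu(a):f_X(a)=x\}$ may be $0$.

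This is precisely why the paper's proof is more intricate. It encodes $\mathcal U$ by a positive function $\alpha:X\times Y\to(0,1]$, stratifies $A$ by the level sets $B_n=\{a:\alpha(f(a))\geq 2^{-n}\}$, and on each shell $B_n\setminus\intr B_{n-1}$ applies the hypothesis with a single cover of $X$ of mesh $<2^{-n-2}$ (here the closed-hereditary assumption on $\mathfrak C$ is used to keep $A_\gamma\cap(B_n\setminus\intr B_{n-1})\in\mathfrak C$), gluing the results inductively via the Homotopy Extension Theorem. The resulting $g:A\to X$ does \emph{not} make $\{g(A_\gamma)\}$ locally finite in $X$; only $\{g(A_\gamma\cap B_n)\}$ is locally finite in $X$ for each fixed $n$. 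Local finiteness of $\{(g(a),f_Y(a)):a\in A_\gamma\}$ in $X\times Y$ is then obtained by a limit argument that uses $\alpha$ to pin any would-be accumulation to a fixed level $B_n$. Your shortcut of getting local finiteness already in $X$ is stronger than what is actually achievable under the hypothesis.
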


\begin{proof}
Taking admissible metrics $d_X$ and $d_Y$ on $X$ and $Y$ respectively, we define an admissible metric $d$ on $X \times Y$ by
 $$d((x,y),(x',y')) = d_X(x,x') + d_Y(y,y')$$
 for $x, x' \in X$ and $y, y' \in Y$.
We shall show that for every map $f : A = \bigoplus_{\gamma < \kappa} A_{\gamma} \to X \times Y$, where $A_\gamma \in \mathfrak{C}$ for each $\gamma < \kappa$,
 and for every map $\alpha : X \times Y \to (0,1]$, there exists a map $h : A \to X \times Y$ such that for each $a \in A$, $d(h(a),f(a)) < \alpha(f(a))$ and the family $\{h(A_{\gamma}) \mid \gamma < \kappa\}$ is locally finite in $X \times Y$.

Let $\pr_X : X \times Y \to X$ and $\pr_Y : X \times Y \to Y$ be the projections,
 and
 $$B_n = \{a \in A \mid \alpha(f(a)) \geq 2^{-n}\},$$
 $n \in \omega$.
By induction, we will construct maps $f_n : A \to X$, $n \in \omega$, so that
\begin{enumerate}
 \item $f_n|_{B_{n - 2} \cup (A \setminus B_{n + 1})} = f_{n - 1}|_{B_{n - 2} \cup (A \setminus B_{n + 1})}$,
 \item $d_X(f_n(a),f_{n - 1}(a)) < 2^{-n - 2}$ for every $a \in A$,
 \item $\{f_n(A_\gamma \cap B_n) \mid \gamma < \kappa\}$ is locally finite in $X$,
\end{enumerate}
 where $f_{-1} = \pr_Xf$ and $B_{-2} = B_{-1} = \emptyset$.
Suppose that $f_i$, $i \leq n - 1$, have been obtained.
Due to the inductive assumption, the family $\{f_{n - 1}(A_\gamma \cap B_{n - 1}) \mid \gamma < \kappa\}$ is locally finite in $X$.
Hence there exists an open cover $\mathcal{U}$ of $X$ such that the mesh $< 2^{-n - 2}$ and the following holds.
\begin{itemize}
 \item For every map $\phi : A \to X$, if $\phi$ is $\mathcal{U}$-close to $f_{n - 1}$,
 then $\{\phi(A_\gamma \cap B_{n - 1}) \mid \gamma < \kappa\}$ is locally finite in $X$.
\end{itemize}
Note that each $A_\gamma \cap B_n \setminus \intr{B_{n - 1}} \in \mathfrak{C}$,
 where $\intr{B_{n - 1}}$ is the interior of $B_{n - 1}$.
Applying the $\kappa$-locally finite approximation property for $\mathfrak{C}$ of $X$, we can take a map $\phi : B_n \setminus \intr{B_{n - 1}} \to X$ so that $\phi$ is $\mathcal{U}$-homotopic to $f_{n - 1}|_{B_n \setminus \intr{B_{n - 1}}}$ and
 $$\{\phi(A_\gamma \cap B_n \setminus \intr{B_{n - 1}}) \mid \gamma < \kappa\}$$
 is locally finite in $X$.
By the Homotopy Extension Theorem, there is a $\mathcal{U}$-close map $f_n : A \to X$ to $f_{n - 1}$,
 which implies that $d_X(f_n(a),f_{n - 1}(a)) < 2^{-n - 2}$ for any $a \in A$, such that $f_n|_{B_{n - 2} \cup (A \setminus B_{n + 1})} = f_{n - 1}|_{B_{n - 2} \cup (A \setminus B_{n + 1})}$ and $f_n|_{B_n \setminus \intr{B_{n - 1}}} = \phi$.
It follows from the definition of $\mathcal{U}$ that $\{f_n(A_\gamma \cap B_{n - 1}) \mid \gamma < \kappa\}$ is locally finite in $X$,
 and hence $\{f_n(A_\gamma \cap B_n) \mid \gamma < \kappa\}$ is locally finite in $X$.

After completing the inductive construction, we can define a map $g : A \to X$ by $g|_{B_n} = f_{n + 1}|_{B_n}$.
Then $d_X(g(a),\pr_X(f(a))) < \alpha(f(a))$ for each $a \in A$.
Indeed, fix any $a \in A$,
 so we can find $n \in \omega$ so that $a \in B_n \setminus B_{n - 1}$,
 that is, $2^{-n} \leq \alpha(f(a)) < 2^{-n + 1}$.
Observe that
\begin{align*}
 d_X(g(a),\pr_X(f(a))) &= d_X(f_{n + 1}(a),f_{n - 2}(a))\\
 &\leq d_X(f_{n + 1}(a),f_n(a)) + d_X(f_n(a),f_{n - 1}(a)) + d_X(f_{n - 1}(a),f_{n - 2}(a))\\
 &< 2^{-n - 3} + 2^{-n - 2} + 2^{-n - 1} < 2^{-n} \leq \alpha(f(a)).
\end{align*}

Now we can obtain the desired map $h : A \to X \times Y$ defined by $h(a) = (g(a),\pr_Y(f(a)))$.
For each $a \in A$,
 $$d(h(a),f(a)) = d_X(g(a),\pr_X(f(a))) + d_Y(\pr_Y(f(a)),\pr_Y(f(a))) < \alpha(f(a)).$$
It remains to show that $\{h(A_{\gamma}) \mid \gamma < \kappa\}$ is locally finite in $X \times Y$.
Suppose conversely,
 so we can choose $(x,y) \in X \times Y$ and $a_i \in A_{\gamma_i}$,
 where $\gamma_i \neq \gamma_j$ if $i \neq j$, so that $h(a_i)$ converges to $(x,y)$.
When $\liminf \alpha(f(a_i)) = 0$,
 replacing $\{a_i\}$ with a subsequence, we have $\lim \alpha(f(a_i)) = 0$.
Since $d(h(a_i),f(a_i)) < \alpha(f(a_i))$ and $h(a_i) \to (x,y)$,
 $f(a_i)$ converges to $(x,y)$.
Then $\alpha(f(a_i)) \to \alpha(x,y) > 0$,
 which is a contradiction.
When $\liminf \alpha(f(a_i)) > 0$,
 there is $n \in \omega$ such that $2^{-n} \leq \liminf \alpha(f(a_i)) < 2^{-n + 1}$.
Taking a subsequence, we may assume that $2^{-n - 1} < \alpha(f(a_i)) < 2^{-n + 1}$.
Then any $a_i \in B_{n + 1}$.
Since $h(a_i) \to (x,y)$,
 we have
 $$f_{n + 2}(a_i) = g(a_i) = \pr_X(h(a_i)) \to x,$$
 which contradicts to that $\{f_{n + 2}(A_\gamma \cap B_{n + 1}) \mid \gamma < \kappa\}$ is locally finite in $X$.
Therefore $\{h(A_{\gamma}) \mid \gamma < \kappa\}$ is locally finite in $X \times Y$.
\end{proof}

\section{Characterizing absorbing sets in Hilbert manifolds}

In this section, we shall prove Theorem~\ref{abs.char.}.
Theorem~5.1 of \cite{BeMo} and Theorem~3.7 of \cite{SaY} are rewritten as follows\footnote{Theorem~5.1 of \cite{BeMo} and Theorem~3.7 of \cite{SaY} are valid without the assumption that classes are additive and contain $\I^n \times \kappa$.
Indeed, we can modify the proofs of Proposition~2.3 and Theorem~3.1 of \cite{BeMo}, and Proposition~3.5 of \cite{SaY}.}:

\begin{prop}\label{homot.equiv.}
Let $\mathfrak{C}$ be a topological and closed hereditary class, and $Y$ be a $\mathfrak{C}$-absorbing set in an $\ell_2(\kappa)$-manifold.
Suppose that a space $X \in \mathfrak{C}_\sigma$ of density $\leq \kappa$ is an ANR, is strongly $\mathfrak{C}$-universal, is a strong $Z_\sigma$-set in itself, and has the $\kappa$-discrete $n$-cells property for every $n \in \omega$.
Then every fine homotopy equivalence $f : Y \to X$ is a near-homeomorphism.
\end{prop}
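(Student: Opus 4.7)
The plan is to invoke the back-and-forth uniqueness scheme for absorbing sets established in Theorem~5.1 of \cite{BeMo} and Theorem~3.7 of \cite{SaY}, while verifying that the two hypotheses of those theorems that are being dropped here---additivity of $\mathfrak{C}$, and containment of $\I^n \times \kappa$ in $\mathfrak{C}$---are not actually needed. First I would upgrade the strong $\mathfrak{C}$-universality of $X$ to strong $\mathfrak{C}_\sigma$-universality via Proposition~\ref{str.univ.sigma}, whose hypotheses (ANR of density $\leq\kappa$, $\kappa$-discrete $n$-cells property for every $n$, strong $Z_\sigma$-set in itself) are all present. The analogous property of $Y$ holds automatically: being $\mathfrak{C}$-absorbing, $Y$ is strongly $\mathfrak{C}$-universal and a strong $Z_\sigma$-set in itself, and it inherits the $\kappa$-discrete $n$-cells property from its ambient $\ell_2(\kappa)$-manifold by homotopy density.

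The back-and-forth then proceeds as in \cite{BeMo,SaY}: write $X = \bigcup_n X_n$ and $Y = \bigcup_n Y_n$ as increasing unions of closed members of $\mathfrak{C}$, fix a fine homotopy inverse $g : X \to Y$ of $f$, and inductively construct $Z$-embeddings $f_n : Y \to X$ and $g_n : X \to Y$ alternating so that at stage $n$ each new embedding extends the previous one on a pre-specified $Z$-set containing the relevant image of the previous stage, and so that the union $f_n(Y_n) \cup g_n(X_n)$ remains a $Z$-set. The inductive extension uses strong $\mathfrak{C}_\sigma$-universality on the appropriate side together with Lemma~\ref{Z-homot.dense} and the Homotopy Extension Theorem to enforce compatibility. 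If the controlling open covers are refined rapidly enough, the sequence $(f_n)$ converges uniformly to a homeomorphism $h : Y \to X$ that is $\mathcal{U}$-close to $f$.

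The main obstacle, already noted in the footnote, is to justify dispensing with the additivity and $\I^n \times \kappa$ assumptions. Additivity was invoked in \cite{BeMo,SaY} only in order to combine two $\mathfrak{C}$-members appearing as $Z$-embedding images at successive stages and keep the union in $\mathfrak{C}$; in our setting one needs only the union to be a (strong) $Z$-set, which follows from Proposition~\ref{Z-op.union}~(2) and Proposition~\ref{Z-DCP}~(2) together with the hypothesis that $X$ is a strong $Z_\sigma$-set in itself. The containment $\I^n \times \kappa \in \mathfrak{C}$ entered those arguments solely to guarantee the $\kappa$-discrete $n$-cells property, which is assumed outright here. With these two substitutions, the proofs of Proposition~2.3 and Theorem~3.1 of \cite{BeMo} and of Proposition~3.5 of \cite{SaY} transfer essentially verbatim, yielding the proposition.
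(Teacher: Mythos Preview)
Your proposal is correct and matches the paper's approach: the paper does not give an independent proof but cites Theorem~5.1 of \cite{BeMo} and Theorem~3.7 of \cite{SaY}, with a footnote observing that additivity and the condition $\I^n\times\kappa\in\mathfrak{C}$ can be dropped by modifying Proposition~2.3 and Theorem~3.1 of \cite{BeMo} and Proposition~3.5 of \cite{SaY}---exactly the two substitutions you identify (the discrete cells property replaces $\I^n\times\kappa\in\mathfrak{C}$, and only the $Z$-set property of unions is needed, not membership in $\mathfrak{C}$). Your sketch of the back-and-forth is simply a fuller account of what those references do.
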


Corollary~5.6~(i) of \cite{BeMo} and Theorem~3.9~(1) of \cite{SaY} hold without some conditions on classes.
The following proposition is proven in \cite{Sa11}\footnote{In \cite{BeMo,SaY}, Proposition~\ref{abs.set-H.mfd.} is shown by the Triangulation Theorem of Hilbert manifolds under the assumption that $\mathfrak{C}$ is $\I$-stable.
On the other hand, using the Open Embedding Theorem of Hilbert manifolds, K.~Sakai prove it without the $\I$-stability in \cite{Sa11}.}.
For the sake of completeness, we give its proof.

\begin{prop}\label{abs.set-H.mfd.}
Suppose that $\mathfrak{C}$ is a topological and closed hereditary class, and $\Omega$ is a $\mathfrak{C}$-absorbing set in $\ell_2(\kappa)$.
Then any $\ell_2(\kappa)$-manifold $M$ contains a $\mathfrak{C}$-absorbing set.
\end{prop}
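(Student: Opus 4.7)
The strategy is to invoke the Open Embedding Theorem for Hilbert manifolds, which realizes every $\ell_2(\kappa)$-manifold as an open subset of $\ell_2(\kappa)$; after identifying $M$ with such an open subset $U \subset \ell_2(\kappa)$, it suffices to prove that $\Omega \cap U$ is a $\mathfrak{C}$-absorbing set in $U$. The proof then amounts to checking the three defining conditions of an absorbing set for $\Omega \cap U$ inside $U$.

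To see $\Omega \cap U \in \mathfrak{C}_\sigma$, I would write $\Omega = \bigcup_n A_n$ with each $A_n \in \mathfrak{C}$ closed in $\Omega$, and exploit that the open set $U$ in the metric space $\ell_2(\kappa)$ is $F_\sigma$, say $U = \bigcup_k F_k$ with $F_k$ closed in $\ell_2(\kappa)$. Then $\Omega \cap U = \bigcup_{n,k}(A_n \cap F_k)$, and each term is closed in $A_n$, hence lies in $\mathfrak{C}$ by closed hereditariness, and is closed in $\Omega \cap U$. For the homotopy density of $\Omega \cap U$ in $U$, I would begin with a homotopy $h : \ell_2(\kappa) \times \I \to \ell_2(\kappa)$ witnessing that $\Omega$ is homotopy dense in $\ell_2(\kappa)$, and perform a standard cut-off: choose a continuous $\beta : U \to (0,1]$ such that $h(\{x\} \times [0,\beta(x)]) \subset U$ for every $x \in U$ (which exists by continuity of $h$, compactness of $\I$, and openness of $U$), and define $h'(x,t) = h(x, t\beta(x))$. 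Then $h'(U \times \I) \subset U$, $h'(\cdot,0) = \id_U$, and $h'(U \times (0,1]) \subset \Omega \cap U$, as required.

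The remaining two conditions follow essentially from the propositions already established. Since $\Omega$ is homotopy dense in the AR $\ell_2(\kappa)$, it is an ANR, and $\Omega \cap U$ is open in $\Omega$; Proposition~\ref{str.univ.op.} therefore transfers the strong $\mathfrak{C}$-universality of $\Omega$ to $\Omega \cap U$. For the strong $Z_\sigma$-condition, I would write $\Omega = \bigcup_n Z_n$ with each $Z_n$ a strong $Z$-set in $\Omega$; Proposition~\ref{Z-op.union}(1) then gives that each $Z_n \cap U = Z_n \cap (\Omega \cap U)$ is a strong $Z$-set in $\Omega \cap U$, so $\Omega \cap U = \bigcup_n(Z_n \cap U)$ is a strong $Z_\sigma$-set in itself. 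The principal technical point is the cut-off construction for homotopy density; apart from this, every step reduces to the closed hereditariness of $\mathfrak{C}$, the $F_\sigma$-character of open subsets of metric spaces, and the open/local nature of strong universality and $Z$-sets in ANRs.
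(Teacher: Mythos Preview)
Your proposal is correct and follows essentially the same approach as the paper: invoke the Open Embedding Theorem to view $M$ as an open subset $U$ of $\ell_2(\kappa)$, set $Y = \Omega \cap U$, and verify the absorbing-set conditions using that $Y$ is open in $\Omega$ together with Propositions~\ref{str.univ.op.} and~\ref{Z-op.union}(1). The paper is terser about the $\mathfrak{C}_\sigma$ membership and the homotopy-density cut-off (it merely asserts these follow from $Y$ being open in $\Omega$ and $M$ being open in $\ell_2(\kappa)$), whereas you spell out the $F_\sigma$ decomposition and the reparametrization by $\beta$, but the argument is the same.
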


\begin{proof}
By the Open Embedding Theorem of Hilbert manifolds, we can regard $M$ as an open subset of $\ell_2(\kappa)$.
Let $Y = M \cap \Omega$,
 so it is a $\mathfrak{C}$-absorbing set in $M$.
Indeed, as is easily observed,
 $Y$ is homotopy dense in $M$ because $\Omega$ is homotopy dense in $\ell_2(\kappa)$ and $M$ is open in $\ell_2(\kappa)$.
Since $Y$ is open in $\Omega$,
 $Y$ is in $\mathfrak{C}_\sigma$, is strong $\mathfrak{C}$-universal, and is a strong $Z_\sigma$-set in itself by Propositions~\ref{str.univ.op.} and \ref{Z-op.union}~(1).
Hence $Y$ is a $\mathfrak{C}$-absorbing set in $M$.
\end{proof}

Now we prove Theorem~\ref{abs.char.}.
The implication (2) $\Rightarrow$ (1) is shown in \cite{Sa11}.

\begin{proof}[Proof of Theorem~\ref{abs.char.}]
First, we show the implication (1) $\Rightarrow$ (3).
Since $X$ is an $\Omega$-manifold,
 each point $x \in X$ has an open neighborhood $U_x$ that is homeomorphic to some open set in $\Omega$.
Observe that $U_x$ is an ANR, is strong $\mathfrak{C}$-universal, and is a strong $Z_\sigma$-set in itself due to Propositions~\ref{str.univ.op.} and \ref{Z-op.union}~(1).
Since $\Omega$ is homotopy dense in $\ell_2(\kappa)$,
 that has the $\kappa$-discrete $n$-cells property for any $n \in \omega$,
 $\Omega$ has the $\kappa$-locally finite $n$-cells property by Proposition~\ref{LFAP-homot.dense}.
Moreover, it follows from Proposition~\ref{LFAP-op.} and \cite[Lemma~4.2]{Kos1} (cf.~\cite[Lemma~4.6]{Ba1}) that each $U_x$ also has the $\kappa$-discrete $n$-cells property for every $n \in \omega$.
According to 6.2.10~(4) of \cite{Sa10}, Propositions~\ref{str.univ.g-hered.} and \ref{str.Z_sigma-g-hered.}, and Corollary~\ref{DAP-g-hered.}, $X$ satisfies the conditions (a), (b), (c), (d) of (3).

Next, we prove (3) $\Rightarrow$ (2).
Combining Proposition~\ref{abs.set-H.mfd.} with Theorem~3.6 of \cite{SaY} (cf.~Theorem~4.2 of \cite{BeMo}), we can obtain a $\mathfrak{C}$-absorbing set $Y$ in some $\ell_2(\kappa)$-manifold and a fine homotopy equivalence $f : Y \to X$.
By Proposition~\ref{homot.equiv.}, $f$ is a near-homeomorphism,
 so $X$ is homeomorphic to $Y$.

Finally, we show (2) $\Rightarrow$ (1).
Regarding $X$ as a $\mathfrak{C}$-absorbing set in some $\ell_2(\kappa)$-manifold $M$, we can see that each point $x \in X$ has an open neighborhood $U_x$ in $M$ that is homeomorphic to $\ell_2(\kappa)$.
Then $X \cap U_x$ is a $\mathfrak{C}$-absorbing set in $U_x$ by the same argument in the proof of Proposition~\ref{abs.set-H.mfd.}.
Note that Theorem~3.1 of \cite{BeMo} holds without additivity of classes and separability (cf.~\cite{SaY}),
 so each point $x \in X$ has the open neighborhood $X \cap U_x$ homeomorphic to $\Omega$.
Therefore $X$ is an $\Omega$-manifold.
\end{proof}

\section{A $\bigoplus_{\kappa} \mathfrak{C}(\kappa')$-absorbing set in $\ell_2(\kappa)$}\label{abs.}

In this section, we shall show Theorem~\ref{abs.discr.}.
From now on, let $\aleph_0 \leq \kappa' < \kappa$, $\mathfrak{C}$ be a topological, closed hereditary and $\I$-stable class, and $\Omega$ be a $\mathfrak{C}(\kappa')$-absorbing set in $\ell_2(\kappa')$.

\begin{prop}\label{homot.dense}
The space $\ell_2^f(\kappa) \times \Omega$ is homotopy dense in $\ell_2(\kappa) \times \ell_2(\kappa')$.
\end{prop}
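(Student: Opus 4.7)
My plan is to build the required homotopy as a product of two separate homotopies: one witnessing that $\ell_2^f(\kappa)$ is homotopy dense in $\ell_2(\kappa)$, and another witnessing that $\Omega$ is homotopy dense in $\ell_2(\kappa')$. The second is available for free from the hypothesis that $\Omega$ is a $\mathfrak{C}(\kappa')$-absorbing set in $\ell_2(\kappa')$: condition~(1) in the definition of an absorbing set provides a homotopy $h_2 : \ell_2(\kappa') \times \I \to \ell_2(\kappa')$ such that $h_2(\ell_2(\kappa') \times (0,1]) \subset \Omega$ and $h_2(y,0) = y$ for all $y \in \ell_2(\kappa')$. So the real content lies in producing the corresponding homotopy for $\ell_2^f(\kappa) \subset \ell_2(\kappa)$.

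For the first factor, I would use the coordinatewise soft-threshold construction, defining $h_1 : \ell_2(\kappa) \times \I \to \ell_2(\kappa)$ by setting, for each $\gamma < \kappa$,
\[
 h_1(x,t)(\gamma) \;=\; \operatorname{sgn}(x(\gamma)) \cdot \max\bigl(0,\, |x(\gamma)| - t\bigr).
\]
Since $x \in \ell_2(\kappa)$ implies that for every $t > 0$ the set $\{\gamma < \kappa \mid |x(\gamma)| > t\}$ is finite, one has $h_1(x,t) \in \ell_2^f(\kappa)$ whenever $t > 0$, and obviously $h_1(x,0) = x$. Continuity in $x$ follows from the fact that the scalar soft-threshold $s \mapsto \operatorname{sgn}(s)\max(0,|s|-t)$ is $1$-Lipschitz, which gives $\|h_1(x,t) - h_1(x',t)\| \leq \|x - x'\|$. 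Joint continuity at points $(x,t)$ with $t > 0$ is straightforward, and joint continuity along $t \to 0$ reduces to a tail estimate coming from the square-summability of $x$.

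Once $h_1$ and $h_2$ are in hand, I would define the desired homotopy
\[
 h : (\ell_2(\kappa) \times \ell_2(\kappa')) \times \I \longrightarrow \ell_2(\kappa) \times \ell_2(\kappa'), \qquad h(x,y,t) = (h_1(x,t),\, h_2(y,t)).
\]
Continuity of $h$ is immediate from that of $h_1$ and $h_2$; the identity condition $h((x,y),0)=(x,y)$ is clear; and for $t > 0$ we have $h_1(x,t) \in \ell_2^f(\kappa)$ and $h_2(y,t) \in \Omega$, so $h$ sends $(\ell_2(\kappa) \times \ell_2(\kappa')) \times (0,1]$ into $\ell_2^f(\kappa) \times \Omega$, proving homotopy density.

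The only mildly delicate point is verifying the joint continuity of $h_1$ as $t \to 0^+$; this is where the argument uses square-summability in an essential way rather than merely the pointwise behaviour of the threshold. Everything else is formal: the product construction transports homotopy density through Cartesian products without any additional hypotheses on either factor.
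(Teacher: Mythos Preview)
Your proposal is correct and follows essentially the same approach as the paper: both argue that $\ell_2^f(\kappa)$ is homotopy dense in $\ell_2(\kappa)$ and $\Omega$ is homotopy dense in $\ell_2(\kappa')$, and then take the product of the two witnessing homotopies. The only difference is that the paper treats the homotopy density of $\ell_2^f(\kappa)$ in $\ell_2(\kappa)$ as a known fact, whereas you supply the explicit soft-threshold construction.
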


\begin{proof}
Since $\ell_2^f(\kappa)$ and $\Omega$ are homotopy dense in $\ell_2(\kappa)$ and $\ell_2(\kappa')$ respectively,
 $\ell_2^f(\kappa) \times \Omega$ is also homotopy dense in $\ell_2(\kappa) \times \ell_2(\kappa')$.
\end{proof}

\begin{prop}\label{class}
The space $\ell_2^f(\kappa) \times \Omega$ is in $(\bigoplus_{\kappa} \mathfrak{C}(\kappa'))_\sigma$.
\end{prop}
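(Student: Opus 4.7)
The plan is to stack two countable decompositions: one of $\Omega$ (since it is $\mathfrak{C}(\kappa')$-absorbing in $\ell_2(\kappa')$, hence in $\mathfrak{C}(\kappa')_\sigma$) and one of $\ell_2^f(\kappa)$ (which is a $\bigoplus_\kappa \mathfrak{M}_0^{fd}$-absorbing set in $\ell_2(\kappa)$ by the fact recorded in the introduction, hence lies in $(\bigoplus_\kappa \mathfrak{M}_0^{fd})_\sigma$). First I will write $\Omega = \bigcup_{n \in \omega} \Omega_n$ with each $\Omega_n$ closed in $\Omega$ and $\Omega_n \in \mathfrak{C}(\kappa')$, and $\ell_2^f(\kappa) = \bigcup_{m \in \omega} L_m$, where each $L_m$ is closed in $\ell_2^f(\kappa)$ and has the form $L_m = \bigoplus_{\gamma < \kappa} L_{m,\gamma}$ with $L_{m,\gamma}$ a finite-dimensional compact space.

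Multiplying these decompositions gives $\ell_2^f(\kappa) \times \Omega = \bigcup_{m,n \in \omega} L_m \times \Omega_n$, a countable union of closed subsets of $\ell_2^f(\kappa) \times \Omega$. Since topological sums commute with product, $L_m \times \Omega_n = \bigoplus_{\gamma < \kappa}(L_{m,\gamma} \times \Omega_n)$, so it suffices to verify that every factor $L_{m,\gamma} \times \Omega_n$ belongs to $\mathfrak{C}(\kappa')$.

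The crux—and the one step that is not just bookkeeping—is showing that each $L_{m,\gamma} \times \Omega_n$ lies in $\mathfrak{C}$. The idea is to use $\I$-stability to swallow the compact factor: since $L_{m,\gamma}$ is finite-dimensional compact metrizable, it embeds as a closed subset into some cube $\I^{k}$ (with $k = k(m,\gamma)$). Iterating $\I$-stability $k$ times yields $\Omega_n \times \I^k \in \mathfrak{C}$, and $L_{m,\gamma} \times \Omega_n$ is homeomorphic to a closed subspace of $\Omega_n \times \I^k$. Closed hereditariness and topological invariance of $\mathfrak{C}$ then give $L_{m,\gamma} \times \Omega_n \in \mathfrak{C}$. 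The density bound is automatic: $L_{m,\gamma}$ is separable and $\Omega_n$ has density $\leq \kappa'$, so $L_{m,\gamma} \times \Omega_n$ has density $\leq \max(\aleph_0,\kappa') = \kappa'$, placing it in $\mathfrak{C}(\kappa')$.

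Putting the pieces together, each $L_m \times \Omega_n$ belongs to $\bigoplus_\kappa \mathfrak{C}(\kappa')$, and the countable union is closed-indexed by $(m,n) \in \omega \times \omega$, so $\ell_2^f(\kappa) \times \Omega \in (\bigoplus_\kappa \mathfrak{C}(\kappa'))_\sigma$. I do not anticipate any serious obstacle; the only subtlety to watch for is keeping track of which closure is meant in each decomposition (closed in $\Omega$ and in $\ell_2^f(\kappa)$ respectively, and hence closed in the product $\ell_2^f(\kappa) \times \Omega$) and invoking $\I$-stability together with the closed hereditary and topological properties of $\mathfrak{C}$ in the correct order.
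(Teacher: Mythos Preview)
Your proposal is correct and follows essentially the same approach as the paper: decompose each factor as a countable union of closed pieces (with the $\ell_2^f(\kappa)$ pieces being $\kappa$-fold discrete sums of finite-dimensional compacta), take products, and then use $\I$-stability together with closed hereditariness and topological invariance to place each $L_{m,\gamma}\times\Omega_n$ in $\mathfrak{C}(\kappa')$. The paper's proof is terser---it simply asserts that the three hypotheses on $\mathfrak{C}$ force $A_{(n,\gamma)}\times\Omega_m\in\mathfrak{C}(\kappa')$---whereas you spell out the embedding into a cube $\I^k$, which is exactly the intended mechanism.
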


\begin{proof}
Since $\ell_2^f(\kappa)$ is strongly countable-dimensional, $\sigma$-locally compact and of density $\kappa$,
 and $\Omega \in (\mathfrak{C}(\kappa'))_\sigma$,
 we can write $\ell_2^f(\kappa) = \bigcup_{n \in \omega} (\bigoplus_{\gamma < \kappa} A_{(n,\gamma)})$ and $\Omega = \bigcup_{m \in \omega} \Omega_m$,
 where $A_{(n,\gamma)} \in \mathfrak{M}_0^{fd}$ and $\Omega_m \in \mathfrak{C}(\kappa')$ is closed in $\Omega$.
Then
 $$\ell_2^f(\kappa) \times \Omega = \bigcup_{n,m \in \omega} \Bigg(\bigoplus_{\gamma < \kappa} A_{(n,\gamma)} \times \Omega_m\Bigg).$$
Since $\mathfrak{C}$ is topological, closed hereditary and $\I$-stable,
 each closed subset $A_{(n,\gamma)} \times \Omega_m \in \mathfrak{C}(\kappa')$.
Hence $\ell_2^f(\kappa) \times \Omega \in (\bigoplus_{\kappa} \mathfrak{C}(\kappa'))_\sigma$.
\end{proof}

Let $X = (X,d)$ be a metric space.
For a class $\mathfrak{M}$ and a cardinal $\lambda$, we say that $X$ has \textit{the $\lambda$-discrete separation property for $\mathfrak{M}$} if the following is satisfied.
\begin{itemize}
 \item For each $\epsilon > 0$, there exists $\delta > 0$ such that for any map $f : \bigoplus_{\gamma < \lambda} A_\gamma \to X$, where each $A_\gamma \in \mathfrak{M}$,
 there is a map $g : \bigoplus_{\gamma < \lambda} A_\gamma \to X$ such that $g$ is $\epsilon$-homotopic to $f$ and $d(g(A_\gamma),g(A_{\gamma'})) \geq \delta$ for any $\gamma < \gamma' < \lambda$.
\end{itemize}
In the case that $\mathfrak{M} = \{\I^n\}$, $n \in \omega$, refer to \cite{BZ}.
By the same argument, Lemmas~5.2 and 6.1 of \cite{BZ} are generalized as follows:

\begin{lem}\label{DSP-DAP}
Let $\mathfrak{M}$ be a class.
If a metric space $X$ has the $\kappa$-discrete separation property for $\mathfrak{M}(\kappa')$,
 then $X$ has the $\kappa$-discrete approximation property for $\mathfrak{M}(\kappa')$.
\end{lem}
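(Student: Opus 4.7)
The plan is to deduce the $\kappa$-discrete approximation property from the $\kappa$-discrete separation property by exploiting one elementary metric fact together with an iterative source-side rescaling argument. The \emph{key observation} is that if $d(g(A_\gamma), g(A_{\gamma'})) \geq \delta$ for all $\gamma \neq \gamma' < \kappa$, then any open ball of radius $\delta/2$ in $X$ meets at most one of the sets $g(A_\gamma)$, so the family $\{g(A_\gamma) \mid \gamma < \kappa\}$ is automatically discrete in $X$. Thus the DSP directly delivers discreteness, and the only remaining work is to upgrade ``$\epsilon$-homotopic'' control (a single uniform constant) to ``$\mathcal{U}$-close'' control (for an arbitrary open cover $\mathcal{U}$ of $X$).

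To set up variable precision, I would first use metric paracompactness of $X$ to choose a continuous function $\alpha : X \to (0,1]$ such that for every $x \in X$ the ball $B(x, 2\alpha(x))$ is contained in some member of $\mathcal{U}$; it then suffices to build $g : A \to X$ with $d(g(a), f(a)) < \alpha(f(a))$ for every $a \in A$, where $A = \bigoplus_{\gamma < \kappa} A_\gamma$. Next, stratify the source by the closed sets $C_n = \{a \in A \mid \alpha(f(a)) \geq 2^{-n}\}$, each of which splits as $\bigoplus_{\gamma < \kappa} (A_\gamma \cap C_n)$ with the pieces still in $\mathfrak{M}(\kappa')$ by closed hereditariness. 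Inductively construct maps $f = f_0, f_1, f_2, \ldots : A \to X$ and separations $\delta_1, \delta_2, \ldots > 0$ by applying the DSP to $f_{n-1}|_{C_n}$ with parameter $\epsilon_n \leq 2^{-n-2}$, obtaining a map $g_n : C_n \to X$ together with an $\epsilon_n$-homotopy $H_n : C_n \times \I \to X$ from $f_{n-1}|_{C_n}$ to $g_n$ whose terminal image family is $\delta_n$-separated; then splice back to $A$ by the formula $f_n(a) = H_n(a, \psi_n(a))$ on $C_n$ and $f_n(a) = f_{n-1}(a)$ elsewhere, where $\psi_n : A \to \I$ is a Urysohn function equal to $1$ on an appropriate inner shell of $C_n$ and to $0$ on the topological boundary of $C_n$ in $A$.

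The bounds $d(f_n(a), f_{n-1}(a)) \leq 2^{-n-2}$ make the sequence uniformly Cauchy, so $f_n$ converges to a continuous $g : A \to X$ satisfying $d(g(a), f(a)) < \alpha(f(a))$, i.e.\ $g$ is $\mathcal{U}$-close to $f$; choosing the $\delta_n$'s to shrink fast enough relative to the decomposition $\{\alpha^{-1}[2^{-n-1}, 2^{-n+1}]\}$ of $X$ transfers the stratum-wise separations into a locally uniform positive separation of $\{g(A_\gamma)\}_{\gamma < \kappa}$, which by the opening observation yields the desired discreteness. The main obstacle is executing the splicing step while preserving the separation achieved at earlier stages: a standard Homotopy Extension argument is unavailable because $X$ is only assumed to be a metric space, not an ANR. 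The workaround is to operate entirely on the source side: since the DSP supplies an actual homotopy $H_n$ on $C_n \times \I$ and $A$ is metric (hence normal), the Urysohn rescaling $\psi_n$ lets one cut off each modification inside $A$ without ever needing to extend maps across closed subsets of $X$.
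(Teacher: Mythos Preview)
Your overall strategy matches what the paper does (it simply cites \cite{BZ} without reproducing the argument): separation by $\delta$ immediately gives discreteness, so the task reduces to upgrading uniform $\epsilon$-control to $\mathcal{U}$-control by stratifying $A$ via level sets $C_n$ of $\alpha \circ f$. There is, however, a minor gap and a serious one. The minor one: you invoke closed hereditariness of $\mathfrak{M}$ to place $A_\gamma \cap C_n$ in $\mathfrak{M}(\kappa')$, but the lemma assumes nothing about $\mathfrak{M}$ beyond being a class. The fix is easy---apply the DSP at each stage to $f_{n-1}$ on all of $A$, obtaining a global $\epsilon_n$-homotopy $H_n : A \times \I \to X$, and only then cut off with $\psi_n$.

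The serious gap is the convergence step. You argue that $d(f_n(a),f_{n-1}(a)) \leq 2^{-n-2}$ makes $(f_n)$ uniformly Cauchy and hence convergent to some $g : A \to X$. But $X$ is merely a metric space, not assumed complete---and the paper applies this lemma to $\ell_2^f(\kappa)$, which is \emph{not} complete---so a Cauchy sequence need not have a limit in $X$. As you describe $\psi_n$ (vanishing only on the topological boundary of $C_n$), the modification at stage $n$ alters $f_{n-1}$ on all of the interior of $C_n$, hence on $C_{n-1} \supset C_{n-2} \supset \cdots$, so $(f_n(a))$ never stabilizes for $a \in C_0$ and its limit may lie outside $X$. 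The cure is to require also $\psi_n = 0$ on $C_{n-2}$, so that $f_n|_{C_{n-2}} = f_{n-1}|_{C_{n-2}}$ and the sequence becomes eventually constant at every point of $A = \bigcup_n C_n$; then $g$ is well defined in $X$ with no appeal to completeness. With that change the construction is really a finite patching rather than an infinite limit, and the discreteness check proceeds shell by shell as your final paragraph suggests.
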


\begin{lem}\label{DSP-lin.}
Let $\mathfrak{M}$ be a class.
Suppose that a metric space $X = (X,\0)$ with the base point $\0$ has a pseudo-translation\footnote{Refer to Section~6 of \cite{BZ}.} with respect to $\0$,
 and is locally path-connected at $\0$.
Then $X$ has the $\kappa$-discrete separation property for $\mathfrak{M}(\kappa')$ if and only if any neighborhood of $\0$ contains a separated subset\footnotemark[6] of cardinality $\kappa$.
\end{lem}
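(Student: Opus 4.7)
The proof follows the template of Lemmas~5.2 and 6.1 of \cite{BZ}, with $\{\I^n\}$ replaced throughout by $\mathfrak{M}(\kappa')$. The essential point is that in both directions the cells $\I^n$ enter only as convenient source spaces $A_\gamma$ in the defining diagram of the discrete separation property, while the pseudo-translation and local path-connectedness hypotheses are conditions on $X$, not on the source; the argument therefore transfers without substantive change.

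For the ``only if'' direction, let $U$ be a neighborhood of $\0$ and choose $\epsilon > 0$ so small that the open ball of radius $\epsilon$ about $\0$ is contained in $U$. The $\kappa$-discrete separation property for $\mathfrak{M}(\kappa')$ supplies a corresponding $\delta > 0$. Pick any nonempty $A \in \mathfrak{M}(\kappa')$ and form the constant map $f \equiv \0$ from $\bigoplus_{\gamma < \kappa} A_\gamma$ (with $A_\gamma = A$) to $X$. The property produces a map $g$ that is $\epsilon$-homotopic to $f$ with $d(g(A_\gamma), g(A_{\gamma'})) \geq \delta$ for all $\gamma \ne \gamma'$. Since the $\epsilon$-homotopy keeps $g(A_\gamma)$ inside the $\epsilon$-ball about $\0$, hence inside $U$, choosing one point from each $g(A_\gamma)$ yields a $\delta$-separated subset of $U$ of cardinality $\kappa$.

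For the ``if'' direction, given $\epsilon > 0$ I would use the continuity of the pseudo-translation together with local path-connectedness at $\0$ to fix a neighborhood $V$ of $\0$, contained in the $\epsilon/2$-ball about $\0$, such that the pseudo-translation corresponding to any point of $V$ moves points of $X$ by less than $\epsilon/2$ along a short path in $V$. By hypothesis, $V$ contains a separated subset $\{x_\gamma : \gamma < \kappa\}$ of cardinality $\kappa$, say at pairwise distance at least some $\delta > 0$. For any map $f : \bigoplus_{\gamma < \kappa} A_\gamma \to X$ with $A_\gamma \in \mathfrak{M}(\kappa')$, I would define $g$ piecewise by translating $f|_{A_\gamma}$ via the pseudo-translation associated to $x_\gamma$: the chosen size of $V$ makes $g$ $\epsilon$-homotopic to $f$ through the short paths in $V$, while the approximately distance-preserving property of the pseudo-translation yields a uniform constant $\delta' > 0$, depending only on $\delta$, for which $d(g(A_\gamma), g(A_{\gamma'})) \geq \delta'$ whenever $\gamma \ne \gamma'$.

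The main technical point, and the place where the argument of \cite{BZ} has to be inspected carefully, is the verification of this uniform lower bound $\delta'$: one must confirm that the pseudo-translation construction in \cite{BZ} provides quantitative distance control independent of the particular map $f$. Since the pseudo-translation operates on $X$ rather than on the sources $A_\gamma$, and since the quantitative bound depends only on the separation $\delta$ of the $x_\gamma$ and on the pseudo-translation itself, this uniformity is already delivered by the argument in \cite{BZ} and passes automatically to the broader class $\mathfrak{M}(\kappa')$; combining it with Lemma~\ref{DSP-DAP} would in fact also give the analogous statement for the discrete approximation property.
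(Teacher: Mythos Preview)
Your proposal is correct and matches the paper's approach exactly: the paper gives no independent proof of this lemma, but simply prefaces it with ``By the same argument, Lemmas~5.2 and 6.1 of \cite{BZ} are generalized as follows,'' which is precisely the claim you make and justify. Your observation that the source spaces $A_\gamma$ play no role in the pseudo-translation argument on $X$ is the only point that needs checking, and you identify it correctly.
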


Using these lemmas, we can obtain the following:

\begin{prop}\label{DAP-l2f}
Let $\mathfrak{M}$ be a class.
The space $\ell_2^f(\kappa)$ has the $\kappa$-discrete approximation property for $\mathfrak{M}(\kappa')$.
\end{prop}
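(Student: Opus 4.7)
The plan is to chain together the two preceding lemmas and then reduce everything to a one-line geometric observation about the canonical basis of $\ell_2(\kappa)$. First I would apply Lemma~\ref{DSP-DAP}, which immediately reduces the claim to showing that $\ell_2^f(\kappa)$ possesses the $\kappa$-discrete separation property for $\mathfrak{M}(\kappa')$.

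Next, to establish this separation property, I would invoke Lemma~\ref{DSP-lin.} with the origin $\0 \in \ell_2^f(\kappa)$ as the base point. Both hypotheses of that lemma are straightforward to check in our setting: since $\ell_2^f(\kappa)$ is a linear subspace of $\ell_2(\kappa)$, ordinary vector translation along any unit vector supplies a pseudo-translation with respect to $\0$ in the sense of \cite{BZ}; and local path-connectedness at $\0$ follows from the convexity (hence path-connectedness) of the norm balls around $\0$. Thus the lemma reduces the problem further to verifying that every neighborhood of $\0$ in $\ell_2^f(\kappa)$ contains a separated subset of cardinality $\kappa$.

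For this final reduction, given $\epsilon > 0$, consider the family
\[
S_\epsilon = \{\,(\epsilon/2)\,\e_\gamma \mid \gamma < \kappa\,\},
\]
where $\{\e_\gamma\}_{\gamma < \kappa}$ denotes the canonical orthonormal basis of $\ell_2(\kappa)$. Each member of $S_\epsilon$ has exactly one nonzero coordinate and hence lies in $\ell_2^f(\kappa)$; moreover, each has norm $\epsilon/2$, so $S_\epsilon$ is contained in the open $\epsilon$-ball around $\0$. Since $\|(\epsilon/2)(\e_\gamma - \e_{\gamma'})\| = \epsilon/\sqrt{2}$ whenever $\gamma \ne \gamma'$, the set $S_\epsilon$ is separated and has cardinality $\kappa$, which is precisely what is required.

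There is no genuine obstacle in this argument; all of the heavy lifting has already been carried out in Lemmas~\ref{DSP-DAP} and \ref{DSP-lin.}. The only point deserving attention is the match between the density $\kappa$ of $\ell_2^f(\kappa)$ and the cardinality of the separated subset we exhibit, but this is automatic from the fact that the canonical basis itself has cardinality $\kappa$.
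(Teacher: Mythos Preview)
Your proposal is correct and follows essentially the same route as the paper: both apply Lemmas~\ref{DSP-DAP} and \ref{DSP-lin.} to reduce to exhibiting a separated subset of cardinality $\kappa$ in every $\epsilon$-ball around $\0$, and both use scaled copies of the canonical orthonormal basis for this purpose (the paper uses $(\epsilon/\sqrt{2})\e_\gamma$ whereas you use $(\epsilon/2)\e_\gamma$, a cosmetic difference). Your explicit verification of the hypotheses of Lemma~\ref{DSP-lin.} is a nice addition that the paper leaves implicit.
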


\begin{proof}
Take an admissible linear metric $d$ on $\ell_2^f(\kappa)$ defined as follows:
 $$d(x,y) = \Bigg(\sum_{\gamma < \kappa} |x(\gamma) - y(\gamma)|^2\Bigg)^\frac{1}{2}$$
 for $x = (x(\gamma)), y = (y(\gamma)) \in \ell_2^f(\kappa)$.
By virtue of Lemmas~\ref{DSP-DAP} and \ref{DSP-lin.}, we only need to show that any neighborhood of the origin $\0 \in \ell_2^f(\kappa)$ contains a separated subset of cardinality $\kappa$.
Indeed, for each $\epsilon > 0$, the $\epsilon$-ball centered at $\0$ contains an $\epsilon$-discrete subset $\{(\epsilon/\sqrt{2})\e_\gamma \mid \gamma < \kappa\}$ of cardinality $\kappa$,
 where $\e_\gamma = (x(\gamma'))_{\gamma' < \kappa}$ is the unit element such that $x(\gamma') = 1$ if $\gamma' = \gamma$,
 and $x(\gamma') = 0$ if $\gamma' \neq \gamma$.
\end{proof}

Remark that $\mathfrak{M}_0^{fd} \subset \mathfrak{C}$ because $\mathfrak{C}$ is topological, closed hereditary and $\I$-stable.
Therefore any space with the $\kappa$-locally finite approximation property for $\mathfrak{C}$ has the $\kappa$-locally finite $n$-cells property for every $n \in \omega$,
 and hence it has the $\kappa$-discrete $n$-cells property by Lemma~4.2 of \cite{Kos1}.
Due to the combination of Propositions~\ref{DAP-prod.}, \ref{DAP-l2f} and Lemma~\ref{LFAP-DAP}, we have the following.

\begin{prop}\label{DAP}
The space $\ell_2^f(\kappa) \times \Omega$ has the $\kappa$-discrete approximation property for $\mathfrak{C}(\kappa')$.
\end{prop}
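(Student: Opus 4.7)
The plan is to chain together the three results the remark explicitly advertises (Propositions~\ref{DAP-prod.}, \ref{DAP-l2f} and Lemma~\ref{LFAP-DAP}), with Proposition~\ref{LFAP-homot.dense} interpolated to pass from the linear Hilbert ambient space down to $\ell_2^f(\kappa) \times \Omega$.

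First, I would invoke Proposition~\ref{DAP-l2f} with $\mathfrak{M} = \mathfrak{C}$ to see that $\ell_2^f(\kappa)$ has the $\kappa$-discrete, hence $\kappa$-locally finite, approximation property for $\mathfrak{C}(\kappa')$. Since $\mathfrak{C}(\kappa')$ is topological and closed hereditary (inherited from $\mathfrak{C}$), Proposition~\ref{DAP-prod.} applied with $Y = \ell_2(\kappa')$ upgrades this to: $\ell_2^f(\kappa) \times \ell_2(\kappa')$ has the $\kappa$-locally finite approximation property for $\mathfrak{C}(\kappa')$. Because $\Omega$ is homotopy dense in $\ell_2(\kappa')$, the product $\ell_2^f(\kappa) \times \Omega$ is homotopy dense in $\ell_2^f(\kappa) \times \ell_2(\kappa')$, so Proposition~\ref{LFAP-homot.dense} transfers the $\kappa$-locally finite approximation property for $\mathfrak{C}(\kappa')$ to $\ell_2^f(\kappa) \times \Omega$.

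The second stage is to promote this to the discrete version via Lemma~\ref{LFAP-DAP}. Its hypothesis demands that for every $A \in \mathfrak{C}(\kappa')$ and every map $\phi : A \to \ell_2^f(\kappa) \times \Omega$, the closure $\cl{\phi(A)}$ is a $Z$-set. Here I would argue as follows: $\ell_2^f(\kappa)$ is an AR and $\Omega$, being homotopy dense in $\ell_2(\kappa')$, is an ANR, so $\ell_2^f(\kappa) \times \Omega$ is an ANR. Since $\mathfrak{C}$ is $\I$-stable, closed hereditary and topological, one has $\mathfrak{M}_0^{fd} \subset \mathfrak{C}$; hence the $\kappa$-locally finite approximation property for $\mathfrak{C}(\kappa')$ established above specializes to the $\kappa$-locally finite $n$-cells property, which by \cite[Lemma~4.2]{Kos1} coincides with the $\kappa$-discrete $n$-cells property. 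Now $A$ has density $\leq \kappa'$, so $\cl{\phi(A)}$ has density $\leq \kappa' < \kappa$, and Proposition~\ref{Z-DCP}(1) forces $\cl{\phi(A)}$ to be a $Z$-set in $\ell_2^f(\kappa) \times \Omega$.

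With the $Z$-set hypothesis confirmed, Lemma~\ref{LFAP-DAP} delivers the $\kappa$-discrete approximation property for $\mathfrak{C}(\kappa')$, which is the claim. The main obstacle is not any single step but rather the bookkeeping of verifying the structural hypotheses needed to invoke Proposition~\ref{Z-DCP}(1), namely the ANR property of $\ell_2^f(\kappa) \times \Omega$ and its $\kappa$-discrete $n$-cells property; both follow almost tautologically from what is already established in the excerpt, so no new technology should be required.
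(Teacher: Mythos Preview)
Your proof is correct and follows essentially the same architecture as the paper's, with one unnecessary detour. The paper applies Proposition~\ref{DAP-prod.} directly with $Y = \Omega$ (the proposition allows \emph{any} space $Y$, with no ANR or completeness hypothesis on the second factor), obtaining the $\kappa$-locally finite approximation property for $\ell_2^f(\kappa) \times \Omega$ in one step; your interpolation of Proposition~\ref{LFAP-homot.dense} through the ambient $\ell_2^f(\kappa) \times \ell_2(\kappa')$ is correct but superfluous. From that point on your argument coincides with the paper's verbatim: the $\kappa$-discrete $n$-cells property via \cite[Lemma~4.2]{Kos1}, the density bound $\kappa' < \kappa$ feeding Proposition~\ref{Z-DCP}(1), and Lemma~\ref{LFAP-DAP} to finish. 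Incidentally, the paper's Remark after the proposition records an alternative route that \emph{does} use Proposition~\ref{LFAP-homot.dense}, but there it replaces Proposition~\ref{DAP-prod.} rather than supplementing it; your version is a harmless hybrid of the two.
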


\begin{proof}
According to Proposition~\ref{DAP-l2f}, the space $\ell_2^f(\kappa)$ has the $\kappa$-discrete approximation property for $\mathfrak{C}(\kappa')$.
The product $\ell_2^f(\kappa) \times \Omega$ has the $\kappa$-locally finite approximation property for $\mathfrak{C}(\kappa')$ by Proposition~\ref{DAP-prod.},
 and hence it has the $\kappa$-discrete $n$-cells property for every $n \in \omega$ due to \cite[Lemma~4.2]{Kos1} (cf.~\cite[Lemma~4.6]{Ba1}).
Remark that for any map $\phi : A \to \ell_2^f(\kappa) \times \Omega$, where $A \in \mathfrak{C}(\kappa')$,
 the closure $\cl{\phi(A)}$ is of density $\leq \kappa' < \kappa$,
 so it is a $Z$-set in $\ell_2^f(\kappa) \times \Omega$ by Proposition~\ref{Z-DCP}~(1).
It follows from Lemma~\ref{LFAP-DAP} that $\ell_2^f(\kappa) \times \Omega$ has the $\kappa$-discrete approximation property for $\mathfrak{C}(\kappa')$.
\end{proof}

\begin{remark}
Proposition~\ref{DAP} can be proven by using Proposition~\ref{LFAP-homot.dense} instead of Proposition~\ref{DAP-prod.}.
Indeed, due to the same argument as Proposition~\ref{DAP-l2f}, the product $\ell_2(\kappa) \times \ell_2(\kappa')$, that is homeomorphic to $\ell_2(\kappa)$,
 has the $\kappa$-discrete approximation property for $\mathfrak{C}(\kappa')$.
Then the homotopy dense subset $\ell_2^f(\kappa) \times \Omega \subset \ell_2(\kappa) \times \ell_2(\kappa')$ has the $\kappa$-locally finite approximation property for $\mathfrak{C}(\kappa')$.
It follows from Lemma~\ref{LFAP-DAP} that $\ell_2^f(\kappa) \times \Omega$ has the $\kappa$-discrete approximation property for $\mathfrak{C}(\kappa')$.
\end{remark}

\begin{prop}\label{str.Z_sigma}
The space $\ell_2^f(\kappa) \times \Omega$ is a strong $Z_\sigma$-set in itself.
\end{prop}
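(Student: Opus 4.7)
The plan is to exploit the fact that both factors come equipped with a strong $Z_\sigma$-decomposition and that taking a product with an arbitrary ANR preserves the strong $Z$-set property (Proposition~\ref{Z-prod.}). Specifically, since $\ell_2^f(\kappa)$ is a $\bigoplus_\kappa \mathfrak{M}_0^{fd}$-absorbing set in $\ell_2(\kappa)$, condition~(3) of the definition of an absorbing set gives that $\ell_2^f(\kappa)$ is a strong $Z_\sigma$-set in itself.

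Concretely, I would first write $\ell_2^f(\kappa) = \bigcup_{n \in \omega} F_n$, where each $F_n$ is a strong $Z$-set in $\ell_2^f(\kappa)$. Then I would apply Proposition~\ref{Z-prod.} with $X = \ell_2^f(\kappa)$, $Y = \Omega$, and $A = F_n$ to conclude that each product $F_n \times \Omega$ is a strong $Z$-set in $\ell_2^f(\kappa) \times \Omega$. Finally, since
\[
\ell_2^f(\kappa) \times \Omega = \bigcup_{n \in \omega} F_n \times \Omega,
\]
this exhibits $\ell_2^f(\kappa) \times \Omega$ as a countable union of strong $Z$-sets in itself, which is exactly the definition of a strong $Z_\sigma$-set in itself.

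There is no real obstacle here; the proposition follows formally once one recognizes that Proposition~\ref{Z-prod.} transports the strong $Z_\sigma$ decomposition of $\ell_2^f(\kappa)$ directly up to the product. It is not even necessary to invoke the corresponding strong $Z_\sigma$ decomposition of $\Omega$, although one could symmetrize the argument by including slices of the form $\ell_2^f(\kappa) \times G_m$ (where $\Omega = \bigcup_m G_m$ is the strong $Z_\sigma$ decomposition of $\Omega$ coming from its being a $\mathfrak{C}(\kappa')$-absorbing set in $\ell_2(\kappa')$). Either way the proof is a short, direct application of the product lemma, with no further general-position input required.
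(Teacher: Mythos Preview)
Your proof is correct and matches the paper's approach exactly: the paper's proof is the one-line assertion that since $\ell_2^f(\kappa)$ (or $\Omega$) is a strong $Z_\sigma$-set in itself, Proposition~\ref{Z-prod.} immediately gives the same for the product. You have simply unpacked that sentence by writing out the decomposition $\ell_2^f(\kappa)=\bigcup_n F_n$ and applying Proposition~\ref{Z-prod.} term by term, which is precisely the implicit reasoning behind the paper's terse statement.
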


\begin{proof}
Since $\ell_2^f(\kappa)$ (or $\Omega$) is a strong $Z_\sigma$-set in itself,
 so is $\ell_2^f(\kappa) \times \Omega$ due to Proposition~\ref{Z-prod.}.
\end{proof}

By virtue of Proposition~\ref{str.univ.prod.}, we have the following:

\begin{prop}\label{str.univ.}
The space $\ell_2^f(\kappa) \times \Omega$ is strongly $\mathfrak{C}(\kappa')$-universal.
\end{prop}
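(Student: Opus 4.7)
The plan is to apply Proposition~\ref{str.univ.prod.} with the factors $X = \Omega$ and $Y = \ell_2^f(\kappa)$ and with the class $\mathfrak{C}(\kappa')$. Several hypotheses are immediate: the class $\mathfrak{C}(\kappa')$ is topological and $\I$-stable because $\mathfrak{C}$ is, and for $A \in \mathfrak{C}(\kappa')$ one has $\dens(A \times \I) \leq \kappa' \cdot \aleph_0 = \kappa'$, so $A \times \I \in \mathfrak{C}(\kappa')$; both $\Omega$ and $\ell_2^f(\kappa)$ are ANRs; and $\Omega$ is strongly $\mathfrak{C}(\kappa')$-universal by hypothesis. The only nontrivial check is the side condition of Proposition~\ref{str.univ.prod.}: every $Z$-set in $\ell_2^f(\kappa) \times \Omega$ must be a strong $Z$-set.

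To verify this, I would invoke Proposition~\ref{Z-DCP}~(2). The ambient space $\ell_2^f(\kappa) \times \Omega$ is an ANR, and by Proposition~\ref{DAP} (whose proof explicitly records this as an intermediate conclusion) it has the $\kappa$-discrete $n$-cells property for every $n \in \omega$. Furthermore, Proposition~\ref{str.Z_sigma} tells us that $\ell_2^f(\kappa) \times \Omega$ is itself a strong $Z_\sigma$-set, so we can write
$$\ell_2^f(\kappa) \times \Omega = \bigcup_{n \in \omega} Z_n,$$
where each $Z_n$ is a strong $Z$-set in the whole product. Now, given any $Z$-set $A$ in $\ell_2^f(\kappa) \times \Omega$, decompose $A = \bigcup_{n \in \omega}(A \cap Z_n)$. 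Each $A \cap Z_n$ is a closed subset of the strong $Z$-set $Z_n$, hence is itself a strong $Z$-set: any rejection homotopy that sends $X$ off $Z_n$ automatically sends $X$ off the smaller set $A \cap Z_n$. Therefore $A$ is a strong $Z_\sigma$-set, and being also a $Z$-set, Proposition~\ref{Z-DCP}~(2) promotes it to a strong $Z$-set.

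With the side condition in hand, Proposition~\ref{str.univ.prod.} applies directly and yields that $\ell_2^f(\kappa) \times \Omega$ is strongly $\mathfrak{C}(\kappa')$-universal, as required. I do not anticipate a serious obstacle: the proof is essentially the assembly of ingredients already prepared in Sections~3 and \ref{D(LF)AP}, together with Propositions~\ref{DAP} and \ref{str.Z_sigma} proved just above, with the one observation that closed subsets of strong $Z$-sets remain strong $Z$-sets acting as the only small bridge between them.
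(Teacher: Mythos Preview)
Your proof is correct and follows essentially the same route as the paper's: verify via Propositions~\ref{DAP}, \ref{str.Z_sigma}, and \ref{Z-DCP}~(2) that every $Z$-set in $\ell_2^f(\kappa) \times \Omega$ is a strong $Z$-set, and then apply Proposition~\ref{str.univ.prod.}. The paper is terser about why a $Z$-set $A$ is a strong $Z_\sigma$-set (it leaves implicit the decomposition $A = \bigcup_n (A \cap Z_n)$ that you spell out), but the argument is the same.
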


\begin{proof}
Since $\mathfrak{M}_0^{fd} \subset \mathfrak{C}$,
 the space $\ell_2^f(\kappa) \times \Omega$ has the $\kappa$-discrete $n$-cells property for any $n \in \omega$ by Proposition~\ref{DAP}.
Moreover, $\ell_2^f(\kappa) \times \Omega$ is a strong $Z_\sigma$-set in itself due to Proposition~\ref{str.Z_sigma}.
It follows from Proposition~\ref{Z-DCP}~(2) that every $Z$-set in $\ell_2^f(\kappa) \times \Omega$ is a strong $Z$-set.
Because $\Omega$ is strongly $\mathfrak{C}(\kappa')$-universal,
 the product space $\ell_2^f(\kappa) \times \Omega$ is also strongly $\mathfrak{C}(\kappa')$-universal by Proposition~\ref{str.univ.prod.}.
\end{proof}

\begin{proof}[Proof of Theorem~\ref{abs.discr.}]
According to Propositions~\ref{str.univ.}, \ref{DAP} and Lemma~\ref{str.discr.approx.}, the space $\ell_2^f(\kappa) \times \Omega$ is strongly $\bigoplus_{\kappa} \mathfrak{C}(\kappa')$-universal.
Combining this with Propositions~\ref{homot.dense}, \ref{class}, and \ref{str.Z_sigma}, we can conclude that $\ell_2^f(\kappa) \times \Omega$ is a $\bigoplus_{\kappa} \mathfrak{C}(\kappa')$-absorbing set in $\ell_2(\kappa) \times \ell_2(\kappa')$.
\end{proof}

\section{Characterizing $(\ell_2^f(\kappa) \times \Omega)$-manifolds}

This section is devoted to proving Main Theorem.
We establish the following theorem.

\begin{thm}
For a connected space $X \in (\bigoplus_{\kappa} \mathfrak{C}(\kappa'))_\sigma$, the following conditions are equivalent.
\begin{enumerate}
 \item $X$ is an $(\ell_2^f(\kappa) \times \Omega)$-manifold.
 \item
 \begin{enumerate}
  \item $X$ is an ANR.
  \item $X$ is strongly $\bigoplus_\kappa \mathfrak{C}(\kappa')$-universal.
  \item $X$ is a strong $Z_\sigma$-set in itself.
 \end{enumerate}
 \item
 \begin{enumerate}
  \item $X$ is an ANR.
  \item
  \begin{enumerate}
   \item $X$ is strongly $\mathfrak{C}(\kappa')$-universal.
   \item $X$ has the $\kappa$-discrete approximation property for $\mathfrak{C}(\kappa')$.
  \end{enumerate}
  \item $X$ is a strong $Z_\sigma$-set in itself.
 \end{enumerate}
 \item
 \begin{enumerate}
  \item $X$ is an ANR.
  \item
  \begin{enumerate}
   \item $X$ is strongly $\mathfrak{C}(\kappa')$-universal.
   \item $X$ has the $\kappa$-discrete approximation property for $\{\Omega\}$.
  \end{enumerate}
  \item $X$ is a strong $Z_\sigma$-set in itself.
 \end{enumerate}
 \item
 \begin{enumerate}
  \item $X$ is an ANR.
  \item
  \begin{enumerate}
   \item $X$ is strongly $\mathfrak{C}(\kappa')$-universal.
   \item $X$ has the $\kappa$-discrete $n$-cells property for every $n \in \omega$.
  \end{enumerate}
  \item $X$ is a strong $Z_\sigma$-set in itself.
 \end{enumerate}
\end{enumerate}
\end{thm}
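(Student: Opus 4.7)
The strategy is to close the cycle $(1) \Rightarrow (2) \Rightarrow (3) \Rightarrow (4) \Rightarrow (5) \Rightarrow (1)$. The equivalence $(1) \Leftrightarrow (2)$ comes essentially from Theorem~\ref{abs.char.} applied to the class $\bigoplus_\kappa \mathfrak{C}(\kappa')$: this class is readily seen to be topological and closed hereditary, Theorem~\ref{abs.discr.} exhibits $\ell_2^f(\kappa) \times \Omega$ as a $\bigoplus_\kappa \mathfrak{C}(\kappa')$-absorbing set in $\ell_2(\kappa) \times \ell_2(\kappa') \cong \ell_2(\kappa)$, and any member of $(\bigoplus_\kappa \mathfrak{C}(\kappa'))_\sigma$ has density at most $\kappa$. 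Theorem~\ref{abs.char.} then delivers ANR, strong $\bigoplus_\kappa \mathfrak{C}(\kappa')$-universality, the $\kappa$-discrete $n$-cells property, and the strong $Z_\sigma$-condition. The first, second, and fourth of these are precisely~(2); the $n$-cells property is in fact redundant, for a $Z$-embedding of a topological sum of $\kappa$ copies of $\I^n$ has a closed image in which each summand is clopen, so the image family is automatically discrete in $X$.

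The successive weakenings are short. $(2) \Rightarrow (3)$ is immediate: strong $\bigoplus_\kappa \mathfrak{C}(\kappa')$-universality specializes to strong $\mathfrak{C}(\kappa')$-universality by placing a single non-empty summand in the sum, and the $Z$-embedding observation above yields the $\kappa$-discrete approximation property for $\mathfrak{C}(\kappa')$. For $(3) \Rightarrow (4)$, decompose $\Omega = \bigcup_n \Omega_n$ with $\Omega_n \in \mathfrak{C}(\kappa')$ closed in $\Omega$; given $f \colon \bigoplus_\gamma \Omega \to X$, build $g$ inductively by applying Lemma~\ref{str.discr.approx.} on the slab $\bigoplus_\gamma \Omega_n$ at stage $n$, using the partial discrete family from stage $n-1$ as the set~$B$ in that lemma, and patching successive stages by the Homotopy Extension Theorem. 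For $(4) \Rightarrow (5)$, each $\I^n \in \mathfrak{C}(\kappa')$ embeds as a $Z$-set in $\Omega$ via strong $\mathfrak{C}(\kappa')$-universality of $\Omega$; a given $f \colon \bigoplus_\gamma \I^n \to X$ extends across each envelope $\Omega$ by the Homotopy Extension Theorem (since $X$ is an ANR and $\I^n$ is a $Z$-set in $\Omega$), application of~(4) discretizes the image family, and restriction recovers the desired $g$.

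The closing implication $(5) \Rightarrow (1)$ carries the main technical content. It suffices to derive~(2), since Theorem~\ref{abs.char.} (applied to $\bigoplus_\kappa \mathfrak{C}(\kappa')$) then yields~(1). The key intermediate step is extracting the $\kappa$-discrete approximation property for $\mathfrak{C}(\kappa')$ from (5); once it is available, Lemma~\ref{str.discr.approx.} promotes strong $\mathfrak{C}(\kappa')$-universality to strong $\bigoplus_\kappa \mathfrak{C}(\kappa')$-universality, exactly as in the proof of Theorem~\ref{abs.discr.}. To obtain this approximation, first observe that any map $\phi \colon A \to X$ with $A \in \mathfrak{C}(\kappa')$ has image closure of density at most $\kappa' < \kappa$, hence a $Z$-set by Proposition~\ref{Z-DCP}~(1); by Lemma~\ref{LFAP-DAP}, it is enough to produce the $\kappa$-locally finite approximation property for $\mathfrak{C}(\kappa')$. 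The plan is to realize each $f|_{A_\gamma}$ as a $Z$-embedding $g_\gamma$ via strong $\mathfrak{C}(\kappa')$-universality, promote the $Z$-image to a strong $Z$-set via Proposition~\ref{Z-DCP}~(2) and the strong $Z_\sigma$-hypothesis, and then perform a transfinite push-off controlled by the $\kappa$-discrete $n$-cells property so that the successively chosen $g_\gamma(A_\gamma)$ form a locally finite family.

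The principal obstacle is precisely this transfinite push-off: at each ordinal $\gamma < \kappa$ one must push the newly chosen $g_\gamma$ off the union of all previously selected images, and at limit ordinals this union can a priori fail to be closed or a $Z$-set. The strong $Z_\sigma$-hypothesis and the $\kappa$-discrete $n$-cells property must cooperate — via the tools of Propositions~\ref{Z-op.union} and~\ref{Z-DCP} — to guarantee that partial unions remain strong $Z$-sets and that $X$ has enough ``room'' to disperse $\kappa$ many images of density at most $\kappa'$. This mirrors the geometric content of Proposition~\ref{DAP-l2f} in the model case $\ell_2^f(\kappa)$, where the discrete separation is witnessed directly by the orthonormal basis.
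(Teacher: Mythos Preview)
Your implication $(5) \Rightarrow (1)$ has a genuine gap that you yourself flag but do not close: the transfinite push-off at limit ordinals requires the union $\bigcup_{\gamma' < \gamma} g_{\gamma'}(A_{\gamma'})$ to remain a (strong) $Z$-set, and for $\gamma$ of uncountable cofinality neither closedness nor the $Z$-property follows from the tools you cite. Propositions~\ref{Z-op.union} and~\ref{Z-DCP} handle only locally finite unions, and you have no a priori control on local finiteness during the induction --- that is precisely what you are trying to produce. The analogy with Proposition~\ref{DAP-l2f} is misleading: there the separated set is exhibited directly, not built inductively.

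The paper sidesteps this entirely by reversing your last two arrows: it proves $(5) \Rightarrow (4)$ and $(4) \Rightarrow (1)$. For $(5) \Rightarrow (4)$, the ANR $\Omega$ is approximated by a metric polyhedron, which lies in $\big(\bigcup_n \bigoplus_{\kappa'} \mathfrak{M}_0^n\big)_\sigma$; since strong $\big(\bigcup_n \bigoplus_\kappa \mathfrak{M}_0^n\big)_\sigma$-universality \emph{does} follow from (5) via Corollary~\ref{DAP-g-hered.}, Lemma~\ref{str.discr.approx.} and Proposition~\ref{str.univ.sigma}, this yields the $\kappa$-discrete approximation property for $\{\Omega\}$. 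For $(4) \Rightarrow (1)$ the decisive tool you are missing is Lemma~\ref{loc.fin.approx.}: every member of $\mathfrak{C}(\kappa')$ embeds closedly in $\Omega$, so the $\kappa$-approximation property for the single space $\{\Omega\}$ upgrades, on each contractible open $W \subset X$, to the $\kappa$-locally finite approximation property for all of $\mathfrak{C}(\kappa')$; then Proposition~\ref{Z-DCP}(1) and Lemma~\ref{LFAP-DAP} give the discrete version on $W$, so $W$ satisfies (3) and is an $(\ell_2^f(\kappa) \times \Omega)$-manifold. This local reduction via Lemma~\ref{loc.fin.approx.} is exactly the device that replaces your transfinite push-off.

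Your $(3) \Rightarrow (4)$ is also incomplete as written: discreteness of each slab family $\{g((\Omega_n)_\gamma)\}_{\gamma}$ does not by itself give discreteness of $\{g(\Omega_\gamma)\}_{\gamma}$, since the witnessing neighborhoods may shrink with $n$. The paper routes this through $(3) \Rightarrow (2) \Rightarrow (4)$, invoking Proposition~\ref{str.univ.sigma} once to absorb the $\sigma$-structure rather than rebuilding it by hand.
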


\begin{proof}
Note that the strong $\bigoplus_\kappa \mathfrak{C}(\kappa')$-universality implies the $\kappa$-discrete $n$-cells property for any $n \in \omega$.
Due to Theorems~\ref{abs.char.} and \ref{abs.discr.}, the equivalence (1) $\Leftrightarrow$ (2) holds.
The implications (2) $\Rightarrow$ (3) $\Rightarrow$ (5) are obvious.
According to Proposition~\ref{str.univ.sigma}, (2) $\Rightarrow$ (4) holds because for each topological copy $A_\gamma$ of $\Omega \in (\mathfrak{C}(\kappa'))_\sigma$, $\gamma < \kappa$, the sum
 $$\bigoplus_{\gamma < \kappa} A_\gamma \in \bigoplus_\kappa (\mathfrak{C}(\kappa'))_\sigma \subset \Bigg(\bigoplus_\kappa \mathfrak{C}(\kappa')\Bigg)_\sigma.$$
Combining Lemma~\ref{str.discr.approx.} with the condition (b) of (3), we get (b) of (2).
Therefore (3) $\Rightarrow$ (2) holds.

We will prove the implication (4) $\Rightarrow$ (1).
Since $X$ is locally contractible,
 each point of $X$ has an open neighborhood $W$ which is contractible in $X$.
It is enough to show that $W$ is an $(\ell_2^f(\kappa) \times \Omega)$-manifold, that is, $W$ satisfies (3).
As is easily observed, the open set $W$ is an ANR and is in $(\bigoplus_{\kappa} \mathfrak{C}(\kappa'))_\sigma$.
Since $X$ is strongly $\mathfrak{C}(\kappa')$-universal and is a strong $Z_\sigma$-set in itself,
 so is $W$ by Propositions~\ref{str.univ.op.} and \ref{Z-op.union}~(1).
It follows from Lemma~\ref{loc.fin.approx.} that $W$ has the $\kappa$-locally finite approximation property for $\mathfrak{C}(\kappa')$.
Since $\kappa > \kappa'$, 
 the subset $W$ has the $\kappa$-discrete approximation property for $\mathfrak{C}(\kappa')$ by the combination of Proposition~\ref{Z-DCP}~(1) and Lemma~\ref{LFAP-DAP}.
Consequently, $W$ satisfies the condition (3).

It remains to show (5) $\Rightarrow$ (4).
Since $X$ has the $\kappa$-discrete $n$-cells property for every $n \in \omega$,
 $X$ has the $\kappa$-discrete approximation property for $\mathfrak{M}_0^n$ by Corollary~\ref{DAP-g-hered.}.
Note that $\mathfrak{M}_0^n \subset \mathfrak{C}$.
Hence $X$ is strongly $\bigcup_{n \in \omega} (\bigoplus_\kappa \mathfrak{M}_0^n)$-universal by virtue of Lemma~\ref{str.discr.approx.},
 which means that it is strongly $(\bigcup_{n \in \omega} (\bigoplus_\kappa \mathfrak{M}_0^n))_\sigma$-universal due to Proposition~\ref{str.univ.sigma}.
By the combination of Theorems~4.9.6 and 6.6.2 of \cite{Sa10}, the ANR $\Omega$ is approximated by an metric polyhedron\footnote{See Section~4.5 of \cite{Sa10}.}.
Remark that the polyhedron is in $(\bigcup_{n \in \omega} (\bigoplus_{\kappa'} \mathfrak{M}_0^n))_\sigma$ by Proposition~6.1 of \cite{Kos1}.
Therefore $\bigoplus_{\gamma < \kappa} A_\gamma$, where each $A_\gamma$ is homeomorphic to $\Omega$,
 can be approximated by some space belonging in $(\bigcup_{n \in \omega} (\bigoplus_\kappa \mathfrak{M}_0^n))_\sigma$,
 and hence $X$ has the $\kappa$-discrete approximation property for $\{\Omega\}$.
Thus the proof is complete.
\end{proof}

\section*{Acknowledgments}

The author would like to thank Professor Katsuro Sakai for his useful comments and suggestions that helped him to improve the paper.

\end{document}